\newtheorem{theorem}{Theorem}
\newtheorem{lemma}[theorem]{Lemma}
\newtheorem{corollary}[theorem]{Corollary}
\newtheorem{proposition}[theorem]{Proposition}
\theoremstyle{definition}
\newtheorem{definition}[theorem]{Definition}
\newtheorem{example}[theorem]{Example}
\newtheorem{exercise}[theorem]{Exercise}
\theoremstyle{remark}
\newtheorem{remark}[theorem]{Remark}
\numberwithin{equation}{section}
\numberwithin{theorem}{section}
\numberwithin{problem}{section}
\chardef\@x10\chardef\@xv60
\def\tcitime{
\def\@time{%
  \@minute\time\@hour\@minute\divide\@hour\@xv
  \ifnum\@hour<\@x 0\fi\the\@hour:%
  \multiply\@hour\@xv\advance\@minute-\@hour
  \ifnum\@minute<\@x 0\fi\the\@minute
  }}%
\def\QCTOpt[#1]#2{%
  \def\QCTOptB{#1}
  \def\QCTOptA{#2}
}
\def\QCTNOpt#1{%
  \def\QCTOptA{#1}
  \let\QCTOptB\empty
}
\def\Qct{%
  \@ifnextchar[{%
    \QCTOpt}{\QCTNOpt}
}
\def\QCBOpt[#1]#2{%
  \def\QCBOptB{#1}
  \def\QCBOptA{#2}
}
\def\QCBNOpt#1{%
  \def\QCBOptA{#1}
  \let\QCBOptB\empty
}
\def\Qcb{%
  \@ifnextchar[{%
    \QCBOpt}{\QCBNOpt}
}
\def\PrepCapArgs{%
  \ifx\QCBOptA\empty
    \ifx\QCTOptA\empty
      {}%
    \else
      \ifx\QCTOptB\empty
        {\QCTOptA}%
      \else
        [\QCTOptB]{\QCTOptA}%
      \fi
    \fi
  \else
    \ifx\QCBOptA\empty
      {}%
    \else
      \ifx\QCBOptB\empty
        {\QCBOptA}%
      \else
        [\QCBOptB]{\QCBOptA}%
      \fi
    \fi
  \fi
}
\def\GRAPHICSPS#1{%
 \ifcase\GRAPHICSTYPE
   \special{ps: #1}%
 \or
   \special{language "PS", include "#1"}%
 \fi
}%
\def\graffile#1#2#3#4{%
    \bgroup
    \leavevmode
    \@ifundefined{bbl@deactivate}{\def~{\string~}}{\activesoff}
    \raise -#4 \BOXTHEFRAME{%
        \hbox to #2{\raise #3\hbox to #2{\null #1\hfil}}}%
    \egroup
}%
\def\draftbox#1#2#3#4{%
 \leavevmode\raise -#4 \hbox{%
  \frame{\rlap{\protect\tiny #1}\hbox to #2%
   {\vrule height#3 width\z@ depth\z@\hfil}%
  }%
 }%
}%
\newif\ifwasdraft
\def\GRAPHIC#1#2#3#4#5{%
 \ifnum\draft=\@ne\draftbox{#2}{#3}{#4}{#5}%
  \else\graffile{#1}{#3}{#4}{#5}%
  \fi
 }%
\def\addtoLaTeXparams#1{%
    \edef\LaTeXparams{\LaTeXparams #1}}%
\newif\ifBoxFrame \BoxFramefalse
\newif\ifOverFrame \OverFramefalse
\newif\ifUnderFrame \UnderFramefalse
\def\BOXTHEFRAME#1{%
   \hbox{%
      \ifBoxFrame
         \frame{#1}%
      \else
         {#1}%
      \fi
   }%
}
\def\doFRAMEparams#1{\BoxFramefalse\OverFramefalse\UnderFramefalse\readFRAMEparams#1\end}%
\def\readFRAMEparams#1{%
 \ifx#1\end%
  \let\next=\relax
  \else
  \ifx#1i\dispkind=\z@\fi
  \ifx#1d\dispkind=\@ne\fi
  \ifx#1f\dispkind=\tw@\fi
  \ifx#1t\addtoLaTeXparams{t}\fi
  \ifx#1b\addtoLaTeXparams{b}\fi
  \ifx#1p\addtoLaTeXparams{p}\fi
  \ifx#1h\addtoLaTeXparams{h}\fi
  \ifx#1X\BoxFrametrue\fi
  \ifx#1O\OverFrametrue\fi
  \ifx#1U\UnderFrametrue\fi
  \ifx#1w
    \ifnum\draft=1\wasdrafttrue\else\wasdraftfalse\fi
    \draft=\@ne
  \fi
  \let\next=\readFRAMEparams
  \fi
 \next
 }%
\def\IFRAME#1#2#3#4#5#6{%
      \bgroup
      \let\QCTOptA\empty
      \let\QCTOptB\empty
      \let\QCBOptA\empty
      \let\QCBOptB\empty
      #6%
      \parindent=0pt%
      \leftskip=0pt
      \rightskip=0pt
      \setbox0 = \hbox{\QCBOptA}%
      \@tempdima = #1\relax
      \ifOverFrame
          \typeout{This is not implemented yet}%
          \show\HELP
      \else
         \ifdim\wd0>\@tempdima
            \advance\@tempdima by \@tempdima
            \ifdim\wd0 >\@tempdima
               \textwidth=\@tempdima
               \setbox1 =\vbox{%
                  \noindent\hbox to \@tempdima{\hfill\GRAPHIC{#5}{#4}{#1}{#2}{#3}\hfill}\\%
                  \noindent\hbox to \@tempdima{\parbox[b]{\@tempdima}{\QCBOptA}}%
               }%
               \wd1=\@tempdima
            \else
               \textwidth=\wd0
               \setbox1 =\vbox{%
                 \noindent\hbox to \wd0{\hfill\GRAPHIC{#5}{#4}{#1}{#2}{#3}\hfill}\\%
                 \noindent\hbox{\QCBOptA}%
               }%
               \wd1=\wd0
            \fi
         \else
            \ifdim\wd0>0pt
              \hsize=\@tempdima
              \setbox1 =\vbox{%
                \unskip\GRAPHIC{#5}{#4}{#1}{#2}{0pt}%
                \break
                \unskip\hbox to \@tempdima{\hfill \QCBOptA\hfill}%
              }%
              \wd1=\@tempdima
           \else
              \hsize=\@tempdima
              \setbox1 =\vbox{%
                \unskip\GRAPHIC{#5}{#4}{#1}{#2}{0pt}%
              }%
              \wd1=\@tempdima
           \fi
         \fi
         \@tempdimb=\ht1
         \advance\@tempdimb by \dp1
         \advance\@tempdimb by -#2%
         \advance\@tempdimb by #3%
         \leavevmode
         \raise -\@tempdimb \hbox{\box1}%
      \fi
      \egroup%
}%
\def\DFRAME#1#2#3#4#5{%
 \begin{center}
     \let\QCTOptA\empty
     \let\QCTOptB\empty
     \let\QCBOptA\empty
     \let\QCBOptB\empty
     \ifOverFrame 
        #5\QCTOptA\par
     \fi
     \GRAPHIC{#4}{#3}{#1}{#2}{\z@}
     \ifUnderFrame 
        \nobreak\par\nobreak#5\QCBOptA
     \fi
 \end{center}%
 }%
\def\FFRAME#1#2#3#4#5#6#7{%
  \@ifundefined{floatstyle}
    {
     \begin{figure}[#1]%
    }
    {
	 \ifx#1h
      \begin{figure}[H]%
	 \else
      \begin{figure}[#1]%
	 \fi
	}
  \let\QCTOptA\empty
  \let\QCTOptB\empty
  \let\QCBOptA\empty
  \let\QCBOptB\empty
  \ifOverFrame
    #4
    \ifx\QCTOptA\empty
    \else
      \ifx\QCTOptB\empty
        \caption{\QCTOptA}%
      \else
        \caption[\QCTOptB]{\QCTOptA}%
      \fi
    \fi
    \ifUnderFrame\else
      \label{#5}%
    \fi
  \else
    \UnderFrametrue%
  \fi
  \begin{center}\GRAPHIC{#7}{#6}{#2}{#3}{\z@}\end{center}%
  \ifUnderFrame
    #4
    \ifx\QCBOptA\empty
      \caption{}%
    \else
      \ifx\QCBOptB\empty
        \caption{\QCBOptA}%
      \else
        \caption[\QCBOptB]{\QCBOptA}%
      \fi
    \fi
    \label{#5}%
  \fi
  \end{figure}%
 }%
\def\makeactives{
  \catcode`\"=\active
  \catcode`\;=\active
  \catcode`\:=\active
  \catcode`\'=\active
  \catcode`\~=\active
}
   \gdef\activesoff{%
      \def"{\string"}
      \def;{\string;}
      \def:{\string:}
      \def'{\string'}
      \def~{\string~}
    }
\def\FRAME#1#2#3#4#5#6#7#8{%
 \bgroup
 \ifnum\draft=\@ne
   \wasdrafttrue
 \else
   \wasdraftfalse%
 \fi
 \def\LaTeXparams{}%
 \dispkind=\z@
 \def\LaTeXparams{}%
 \doFRAMEparams{#1}%
 \ifnum\dispkind=\z@\IFRAME{#2}{#3}{#4}{#7}{#8}{#5}\else
  \ifnum\dispkind=\@ne\DFRAME{#2}{#3}{#7}{#8}{#5}\else
   \ifnum\dispkind=\tw@
    \edef\@tempa{\noexpand\FFRAME{\LaTeXparams}}%
    \@tempa{#2}{#3}{#5}{#6}{#7}{#8}%
    \fi
   \fi
  \fi
  \ifwasdraft\draft=1\else\draft=0\fi{}%
  \egroup
 }%
\def\TEXUX#1{"texux"}
\def\func#1{\mathop{\rm #1}\nolimits}%
\long\def\QQQ#1#2{%
     \long\expandafter\def\csname#1\endcsname{#2}}%
\long\def\QQA#1#2{}%
\def\QTR#1#2{{\csname#1\endcsname #2}}
\def\EXPAND#1[#2]#3{}%
\def\NOEXPAND#1[#2]#3{}%
\def\LaTeXparent#1{}%
\def\ChildStyles#1{}%
\def\ChildDefaults#1{}%
\def\QTagDef#1#2#3{}%
  \providecommand{\UNICODE}[2][]{}
\def\QQfnmark#1{\footnotemark}
 \def\abstract{%
  \if@twocolumn
   \section*{Abstract (Not appropriate in this style!)}%
   \else \small 
   \begin{center}{\bf Abstract\vspace{-.5em}\vspace{\z@}}\end{center}%
   \quotation 
   \fi
  }%
   \def\registered{\relax\ifmmode{}\r@gistered
                    \else$\m@th\r@gistered$\fi}%
 \def\r@gistered{^{\ooalign
  {\hfil\raise.07ex\hbox{$\scriptstyle\rm\text{R}$}\hfil\crcr
  \mathhexbox20D}}}}{}%
\newdimen\theight
\def\Column{%
 \vadjust{\setbox\z@=\hbox{\scriptsize\quad\quad tcol}%
  \theight=\ht\z@\advance\theight by \dp\z@\advance\theight by \lineskip
  \kern -\theight \vbox to \theight{%
   \rightline{\rlap{\box\z@}}%
   \vss
   }%
  }%
 }%
\def\qed{%
 \ifhmode\unskip\nobreak\fi\ifmmode\ifinner\else\hskip5\p@\fi\fi
 \hbox{\hskip5\p@\vrule width4\p@ height6\p@ depth1.5\p@\hskip\p@}%
 }%
\def\miss{\hbox{\vrule height2\p@ width 2\p@ depth\z@}}%
\def\tcol#1{{\baselineskip=6\p@ \vcenter{#1}} \Column}  %
\def\newfmtname{LaTeX2e}
  \DeclareOldFontCommand{\rm}{\normalfont\rmfamily}{\mathrm}
  \DeclareOldFontCommand{\sf}{\normalfont\sffamily}{\mathsf}
  \DeclareOldFontCommand{\tt}{\normalfont\ttfamily}{\mathtt}
  \DeclareOldFontCommand{\bf}{\normalfont\bfseries}{\mathbf}
  \DeclareOldFontCommand{\it}{\normalfont\itshape}{\mathit}
  \DeclareOldFontCommand{\sl}{\normalfont\slshape}{\@nomath\sl}
  \DeclareOldFontCommand{\sc}{\normalfont\scshape}{\@nomath\sc}
\def\alpha{{\Greekmath 010B}}%
\def\beta{{\Greekmath 010C}}%
\def\gamma{{\Greekmath 010D}}%
\def\delta{{\Greekmath 010E}}%
\def\epsilon{{\Greekmath 010F}}%
\def\zeta{{\Greekmath 0110}}%
\def\eta{{\Greekmath 0111}}%
\def\theta{{\Greekmath 0112}}%
\def\iota{{\Greekmath 0113}}%
\def\kappa{{\Greekmath 0114}}%
\def\lambda{{\Greekmath 0115}}%
\def\mu{{\Greekmath 0116}}%
\def\nu{{\Greekmath 0117}}%
\def\xi{{\Greekmath 0118}}%
\def\pi{{\Greekmath 0119}}%
\def\rho{{\Greekmath 011A}}%
\def\sigma{{\Greekmath 011B}}%
\def\tau{{\Greekmath 011C}}%
\def\upsilon{{\Greekmath 011D}}%
\def\phi{{\Greekmath 011E}}%
\def\chi{{\Greekmath 011F}}%
\def\psi{{\Greekmath 0120}}%
\def\omega{{\Greekmath 0121}}%
\def\varepsilon{{\Greekmath 0122}}%
\def\vartheta{{\Greekmath 0123}}%
\def\varpi{{\Greekmath 0124}}%
\def\varrho{{\Greekmath 0125}}%
\def\varsigma{{\Greekmath 0126}}%
\def\varphi{{\Greekmath 0127}}%
\def\nabla{{\Greekmath 0272}}
\def\FindBoldGroup{%
   {\setbox0=\hbox{$\mathbf{x\global\edef\theboldgroup{\the\mathgroup}}$}}%
}
\def\Greekmath#1#2#3#4{%
    \if@compatibility
        \ifnum\mathgroup=\symbold
           \mathchoice{\mbox{\boldmath$\displaystyle\mathchar"#1#2#3#4$}}%
                      {\mbox{\boldmath$\textstyle\mathchar"#1#2#3#4$}}%
                      {\mbox{\boldmath$\scriptstyle\mathchar"#1#2#3#4$}}%
                      {\mbox{\boldmath$\scriptscriptstyle\mathchar"#1#2#3#4$}}%
        \else
           \mathchar"#1#2#3#4%
        \fi 
    \else 
        \FindBoldGroup
        \ifnum\mathgroup=\theboldgroup 
           \mathchoice{\mbox{\boldmath$\displaystyle\mathchar"#1#2#3#4$}}%
                      {\mbox{\boldmath$\textstyle\mathchar"#1#2#3#4$}}%
                      {\mbox{\boldmath$\scriptstyle\mathchar"#1#2#3#4$}}%
                      {\mbox{\boldmath$\scriptscriptstyle\mathchar"#1#2#3#4$}}%
        \else
           \mathchar"#1#2#3#4%
        \fi     	    
	  \fi}
\newif\ifGreekBold  \GreekBoldfalse
\let\SAVEPBF=\pbf
\def\pbf{\GreekBoldtrue\SAVEPBF}%
  \newcounter{equationnumber}  
  \def\mathletters{%
     \addtocounter{equation}{1}
     \edef\@currentlabel{\theequation}%
     \setcounter{equationnumber}{\c@equation}
     \setcounter{equation}{0}%
     \edef\theequation{\@currentlabel\noexpand\alph{equation}}%
  }
    \def\BibTeX{{\rm B\kern-.05em{\sc i\kern-.025em b}\kern-.08em
                 T\kern-.1667em\lower.7ex\hbox{E}\kern-.125emX}}}{}%
\def\AmS{{\protect\usefont{OMS}{cmsy}{m}{n}%
                A\kern-.1667em\lower.5ex\hbox{M}\kern-.125emS}}}{}%
\def\@@eqncr{\let\@tempa\relax
    \ifcase\@eqcnt \def\@tempa{& & &}\or \def\@tempa{& &}%
      \else \def\@tempa{&}\fi
     \@tempa
     \if@eqnsw
        \iftag@
           \@taggnum
        \else
           \@eqnnum\stepcounter{equation}%
        \fi
     \fi
     \global\tag@false
     \global\@eqnswtrue
     \global\@eqcnt\z@\cr}
\def\TCItag{\@ifnextchar*{\@TCItagstar}{\@TCItag}}
\def\@TCItag#1{%
    \global\tag@true
    \global\def\@taggnum{(#1)}}
\def\@TCItagstar*#1{%
    \global\tag@true
    \global\def\@taggnum{#1}}
\def\tfrac#1#2{{\textstyle {#1 \over #2}}}%
\let\DOTSI\relax
\def\RIfM@{\relax\ifmmode}%
\def\FN@{\futurelet\next}%
\def\iint{\DOTSI\intno@\tw@\FN@\ints@}%
\def\iiint{\DOTSI\intno@\thr@@\FN@\ints@}%
\def\iiiint{\DOTSI\intno@4 \FN@\ints@}%
\def\idotsint{\DOTSI\intno@\z@\FN@\ints@}%
\def\ints@{\findlimits@\ints@@}%
\newif\iflimtoken@
\newif\iflimits@
\def\findlimits@{\limtoken@true\ifx\next\limits\limits@true
 \else\ifx\next\nolimits\limits@false\else
 \limtoken@false\ifx\ilimits@\nolimits\limits@false\else
 \ifinner\limits@false\else\limits@true\fi\fi\fi\fi}%
\def\multint@{\int\ifnum\intno@=\z@\intdots@                          
 \else\intkern@\fi                                                    
 \ifnum\intno@>\tw@\int\intkern@\fi                                   
 \ifnum\intno@>\thr@@\int\intkern@\fi                                 
 \int}
\def\multintlimits@{\intop\ifnum\intno@=\z@\intdots@\else\intkern@\fi
 \ifnum\intno@>\tw@\intop\intkern@\fi
 \ifnum\intno@>\thr@@\intop\intkern@\fi\intop}%
\def\intic@{%
    \mathchoice{\hskip.5em}{\hskip.4em}{\hskip.4em}{\hskip.4em}}%
\def\negintic@{\mathchoice
 {\hskip-.5em}{\hskip-.4em}{\hskip-.4em}{\hskip-.4em}}%
\def\ints@@{\iflimtoken@                                              
 \def\ints@@@{\iflimits@\negintic@
   \mathop{\intic@\multintlimits@}\limits                             
  \else\multint@\nolimits\fi                                          
  \eat@}
 \else                                                                
 \def\ints@@@{\iflimits@\negintic@
  \mathop{\intic@\multintlimits@}\limits\else
  \multint@\nolimits\fi}\fi\ints@@@}%
\def\intkern@{\mathchoice{\!\!\!}{\!\!}{\!\!}{\!\!}}%
\def\plaincdots@{\mathinner{\cdotp\cdotp\cdotp}}%
\def\intdots@{\mathchoice{\plaincdots@}%
 {{\cdotp}\mkern1.5mu{\cdotp}\mkern1.5mu{\cdotp}}%
 {{\cdotp}\mkern1mu{\cdotp}\mkern1mu{\cdotp}}%
 {{\cdotp}\mkern1mu{\cdotp}\mkern1mu{\cdotp}}}%
\def\RIfM@{\relax\protect\ifmmode}
\def\text{\RIfM@\expandafter\text@\else\expandafter\mbox\fi}
\let\nfss@text\text
\def\text@#1{\mathchoice
   {\textdef@\displaystyle\f@size{#1}}%
   {\textdef@\textstyle\tf@size{\firstchoice@false #1}}%
   {\textdef@\textstyle\sf@size{\firstchoice@false #1}}%
   {\textdef@\textstyle \ssf@size{\firstchoice@false #1}}%
   \glb@settings}
\def\textdef@#1#2#3{\hbox{{%
                    \everymath{#1}%
                    \let\f@size#2\selectfont
                    #3}}}
\newif\iffirstchoice@
\def\Let@{\relax\iffalse{\fi\let\\=\cr\iffalse}\fi}%
\def\vspace@{\def\vspace##1{\crcr\noalign{\vskip##1\relax}}}%
\def\multilimits@{\bgroup\vspace@\Let@
 \baselineskip\fontdimen10 \scriptfont\tw@
 \advance\baselineskip\fontdimen12 \scriptfont\tw@
 \lineskip\thr@@\fontdimen8 \scriptfont\thr@@
 \lineskiplimit\lineskip
 \vbox\bgroup\ialign\bgroup\hfil$\m@th\scriptstyle{##}$\hfil\crcr}%
\def\Sb{_\multilimits@}%
\def\endSb{\crcr\egroup\egroup\egroup}%
\def\Sp{^\multilimits@}%
\newdimen\ex@
\def\rightarrowfill@#1{$#1\m@th\mathord-\mkern-6mu\cleaders
 \hbox{$#1\mkern-2mu\mathord-\mkern-2mu$}\hfill
 \mkern-6mu\mathord\rightarrow$}%
\def\leftarrowfill@#1{$#1\m@th\mathord\leftarrow\mkern-6mu\cleaders
 \hbox{$#1\mkern-2mu\mathord-\mkern-2mu$}\hfill\mkern-6mu\mathord-$}%
\def\leftrightarrowfill@#1{$#1\m@th\mathord\leftarrow
\mkern-6mu\cleaders
 \hbox{$#1\mkern-2mu\mathord-\mkern-2mu$}\hfill
 \mkern-6mu\mathord\rightarrow$}%
\def\overrightarrow{\mathpalette\overrightarrow@}%
\def\overrightarrow@#1#2{\vbox{\ialign{##\crcr\rightarrowfill@#1\crcr
 \noalign{\kern-\ex@\nointerlineskip}$\m@th\hfil#1#2\hfil$\crcr}}}%
\def\overleftarrow{\mathpalette\overleftarrow@}%
\def\overleftarrow@#1#2{\vbox{\ialign{##\crcr\leftarrowfill@#1\crcr
 \noalign{\kern-\ex@\nointerlineskip}$\m@th\hfil#1#2\hfil$\crcr}}}%
\def\overleftrightarrow{\mathpalette\overleftrightarrow@}%
\def\overleftrightarrow@#1#2{\vbox{\ialign{##\crcr
   \leftrightarrowfill@#1\crcr
 \noalign{\kern-\ex@\nointerlineskip}$\m@th\hfil#1#2\hfil$\crcr}}}%
\def\underrightarrow{\mathpalette\underrightarrow@}%
\def\underrightarrow@#1#2{\vtop{\ialign{##\crcr$\m@th\hfil#1#2\hfil
  $\crcr\noalign{\nointerlineskip}\rightarrowfill@#1\crcr}}}%
\def\underleftarrow{\mathpalette\underleftarrow@}%
\def\underleftarrow@#1#2{\vtop{\ialign{##\crcr$\m@th\hfil#1#2\hfil
  $\crcr\noalign{\nointerlineskip}\leftarrowfill@#1\crcr}}}%
\def\underleftrightarrow{\mathpalette\underleftrightarrow@}%
\def\underleftrightarrow@#1#2{\vtop{\ialign{##\crcr$\m@th
  \hfil#1#2\hfil$\crcr
 \noalign{\nointerlineskip}\leftrightarrowfill@#1\crcr}}}%
\def\qopnamewl@#1{\mathop{\operator@font#1}\nlimits@}
\let\nlimits@\displaylimits
\def\setboxz@h{\setbox\z@\hbox}
\def\varlim@#1#2{\mathop{\vtop{\ialign{##\crcr
 \hfil$#1\m@th\operator@font lim$\hfil\crcr
 \noalign{\nointerlineskip}#2#1\crcr
 \noalign{\nointerlineskip\kern-\ex@}\crcr}}}}
 \def\rightarrowfill@#1{\m@th\setboxz@h{$#1-$}\ht\z@\z@
  $#1\copy\z@\mkern-6mu\cleaders
  \hbox{$#1\mkern-2mu\box\z@\mkern-2mu$}\hfill
  \mkern-6mu\mathord\rightarrow$}
\def\leftarrowfill@#1{\m@th\setboxz@h{$#1-$}\ht\z@\z@
  $#1\mathord\leftarrow\mkern-6mu\cleaders
  \hbox{$#1\mkern-2mu\copy\z@\mkern-2mu$}\hfill
  \mkern-6mu\box\z@$}
\def\projlim{\qopnamewl@{proj\,lim}}
\def\injlim{\qopnamewl@{inj\,lim}}
\def\varinjlim{\mathpalette\varlim@\rightarrowfill@}
\def\varprojlim{\mathpalette\varlim@\leftarrowfill@}
\def\varliminf{\mathpalette\varliminf@{}}
\def\varliminf@#1{\mathop{\underline{\vrule\@depth.2\ex@\@width\z@
   \hbox{$#1\m@th\operator@font lim$}}}}
\def\varlimsup{\mathpalette\varlimsup@{}}
\def\varlimsup@#1{\mathop{\overline
  {\hbox{$#1\m@th\operator@font lim$}}}}
\def\align{\@verbatim \frenchspacing\@vobeyspaces \@alignverbatim
You are using the "align" environment in a style in which it is not defined.}
\let\csname endalign*\endcsname =\endtrivlist
\def\alignat{\@verbatim \frenchspacing\@vobeyspaces \@alignatverbatim
You are using the "alignat" environment in a style in which it is not defined.}
\let\csname endalignat*\endcsname =\endtrivlist
\def\xalignat{\@verbatim \frenchspacing\@vobeyspaces \@xalignatverbatim
You are using the "xalignat" environment in a style in which it is not defined.}
\let\csname endxalignat*\endcsname =\endtrivlist
\def\gather{\@verbatim \frenchspacing\@vobeyspaces \@gatherverbatim
You are using the "gather" environment in a style in which it is not defined.}
\let\csname endgather*\endcsname =\endtrivlist
\def\multiline{\@verbatim \frenchspacing\@vobeyspaces \@multilineverbatim
You are using the "multiline" environment in a style in which it is not defined.}
\let\csname endmultiline*\endcsname =\endtrivlist
\def\arrax{\@verbatim \frenchspacing\@vobeyspaces \@arraxverbatim
You are using a type of "array" construct that is only allowed in AmS-LaTeX.}
\def\tabulax{\@verbatim \frenchspacing\@vobeyspaces \@tabulaxverbatim
You are using a type of "tabular" construct that is only allowed in AmS-LaTeX.}
\let\csname endarrax*\endcsname =\endtrivlist
\let\csname endtabulax*\endcsname =\endtrivlist
 \def\endequation{%
     \ifmmode\ifinner 
      \iftag@
        \addtocounter{equation}{-1} 
        $\hfil
           \displaywidth\linewidth\@taggnum\egroup \endtrivlist
        \global\tag@false
        \global\@ignoretrue   
      \else
        $\hfil
           \displaywidth\linewidth\@eqnnum\egroup \endtrivlist
        \global\tag@false
        \global\@ignoretrue 
      \fi
     \else   
      \iftag@
        \addtocounter{equation}{-1} 
        \eqno \hbox{\@taggnum}
        \global\tag@false%
        $$\global\@ignoretrue
      \else
        \eqno \hbox{\@eqnnum}
        $$\global\@ignoretrue
      \fi
     \fi\fi
 } 
 \newif\iftag@ \tag@false
 \def\TCItag{\@ifnextchar*{\@TCItagstar}{\@TCItag}}
 \def\@TCItag#1{%
     \global\tag@true
     \global\def\@taggnum{(#1)}}
 \def\@TCItagstar*#1{%
     \global\tag@true
     \global\def\@taggnum{#1}}
     \def\tag{\@ifnextchar*{\@tagstar}{\@tag}}
     \def\@tag#1{%
         \global\tag@true
         \global\def\@taggnum{(#1)}}
     \def\@tagstar*#1{%
         \global\tag@true
         \global\def\@taggnum{#1}}
\begin{document}
\title[Non Scale-Invariant Well/Ill-posedness Separation for Boltzmann]{%
Well/ill-posedness bifurcation for the Boltzmann equation with constant
collision kernel}
\author{Xuwen Chen}
\address{Department of Mathematics, University of Rochester, Rochester, NY
14627}
\email{xuwenmath@gmail.com}
\urladdr{http://www.math.rochester.edu/people/faculty/xchen84/}
\author{Justin Holmer}
\address{Department of Mathematics, Brown University, 151 Thayer Street,
Providence, RI 02912}
\email{justin$\underline{\;\,}$holmer@brown.edu}
\urladdr{http://www.math.brown.edu/\symbol{126}holmer/}
\subjclass[2010]{Primary 76P05, 35Q20, 35R25, 35A01; Secondary 35C05, 35B32,
82C40.}
\keywords{Boltzmann equation, Ill-posedness, $X_{s,b}$ spaces}

\begin{abstract}
We consider the 3D Boltzmann equation with the constant collision kernel. We
investigate the well/ill-posedness problem using the methods from nonlinear
dispersive PDEs.\ We construct a family of special solutions, which are
neither near equilibrium nor self-similar, to the equation, and prove that
the well/ill-posedness threshold in $H^{s}$ Sobolev space is exactly at
regularity $s=1$, despite the fact that the equation is scale invariant at $%
s=\frac{1}{2}$.
\end{abstract}

\maketitle
\tableofcontents


\section{Introduction}

The fundamental Boltzmann equation describes the time-evolution of the
statistical behavior of a thermodynamic system away from a state of
equilibrium in the mesoscopic regime, accounting for both dispersion under
the free flow and dissipation as the result of collisions. So far, the
well-posedness of the Boltzmann equation remains largely open even after the
innovative work \cite{DL89, GS11}, while we are not aware of any
ill-posedness results. The goal of this paper is to investigate the fine
properties of well-posedness versus ill-posedness of the Boltzmann equation
via a scaling point of view, using the latest techniques from the field of
nonlinear dispersive PDEs.

Trying out techniques which work for nonlinear dispersive PDEs on the
Boltzmann equation is not without precursors. For years, there have been
many nice developments \cite%
{AMUXY,AMUXY1,AGPT,Ar,DHWY,Duan1,H1,MS,Yan1,SS1,T1,U1}\ which have hinted at
or used space-time estimates like the Chemin-Lerner spaces or the harmonic
analysis related to nonlinear dispersive PDEs, and many of them have reached
global (strong and mild) solutions if the datum is close enough to the
Maxwellians or satisifes some conditions. In the same period of time, the
theory of nonlinear dispersive PDEs has matured into a stage on which the
working function spaces have been cleanly unified, the well-posedness and
ill-posedness (See, for example, the now well-known work \cite%
{CCT1,CCT2,KPV0,KPV1,KPV,MST1,MST2} and also the survey \cite{NT} and the
references within.) away from the scale invariant spaces have been mostly
settled, and global well-posedness for large solutions at critical scaling
has made significant progress. Of course, the Boltzmann equation is
certainly very different from the nonlinear dispersive PDEs and the nuts and
bolts designed for one, so far, do not fit the other. Interestingly, the
recent series of papers \cite{CDP19a,CDP19b,CDP21} by T. Chen, Denlinger,
and Pavlovi\'{c} suggests that a systematic study of the Boltzmann equation
using tools built for the dispersive PDEs might indeed be possible.

It is well-known that, the Wigner transform turns the kinetic transport
operator $\partial _{t}+v\cdot \nabla _{x}$ into the hyperbolic Schr\"{o}%
dinger operator 
\begin{equation}
i\partial _{t}+\Delta _{y}-\Delta _{y^{\prime }},
\label{op:hyperbolic schrodinger}
\end{equation}%
and hence the Wigner transform turns the Boltzmann equation into a
dispersive equation with (\ref{op:hyperbolic schrodinger}) being the linear
part. Technical problems remain (if they do not get worse) though, since the
form of the collision operators does not improve and derivatives are
complicated under the Wigner transform while the analysis of (\ref%
{op:hyperbolic schrodinger}) was not very developed for a long time. In \cite%
{CDP19a,CDP19b,CDP21}, incorporating the new developments \cite%
{CHPS,CP1,CP2,CP3,CP4,CP5,CT,CPT,C1,C2,C3,CY,CH1,CH2,CH3,CH4,CH5,CH6,CH7,CH8,CH9,CSZ,CSWZ,CPU,KSS,KM1,GSS,HS1,HS2,S,S1,SS}
on the quantum many-body hierarchy dynamics that rely on the analysis of (%
\ref{op:hyperbolic schrodinger}), with some highly nontrivial technical
improvisions, T. Chen, Denlinger, and Pavlovi\'{c} provided an alternate
dispersive PDE based route for proving the local well-posedness of the
Boltzmann equation. The solutions provided in \cite{CDP19a,CDP19b,CDP21}
differ from previous work in the sense that they solve the Boltzmann
equation a.e. instead of everywhere in time. But these solutions are by no
means weak, as the datum-to-solution map is Lipschitz continuous and
provides persistance of regularity.

We follow the lead of \cite{CDP19a,CDP19b,CDP21}, but we study ill-posedness
instead of well-posedness in this paper (and we are not using the Wigner
transform). It is of mathematical and physical interest to prove
well-posedness at the ``optimal" regularity as it would mean all the
nonlinear interactions of the complicated underlying nonlinear equation have
been analyzed and accounted for. Thus knowing in advance where such
``optimality" lies is instructive (if not important). We prove that, the 1st
guess that the well/ill-posedness split at the critical scaling or the 1st
expecation that self-similar solutions would provide the bad solutions, are
in fact wrong, and the ill-posedness starts to happen in the scaling
subcritical regime. Therefore ordinary perturbative methods in proving
well-posedness fail here, and we have to address the full nonlinear equation
with new ideas and we need our estimates to be as sharp as possible.

As we are using dispersive and harmonic analysis techniques, the \emph{%
constant collision kernel} case would be the logical first-go-to context as
it has a particularly clean form for the loss operator in Fourier variables%
\footnote{%
Our $X_{s,b}$ method in fact does not require the Fourier transform of the
collision kernel. See, for example, Appendix \ref{B:BilinearProof}.} and it
would also clarify the role of the dispersive techniques for future
references as there is not a (partial) convolution or angular integral in
the loss term to generate smoothing. Denoting by $f(t,x,v)$ the phase-space
density, we consider here the 3D Boltzmann equation with constant collision
kernel without boundary: 
\begin{equation}
\partial _{t}f+v\cdot \nabla _{x}f=\int_{\mathbb{S}^{2}}\int_{\mathbb{R}%
^{3}}[f(u^{\ast })f(v^{\ast })-f(u)f(v)]\,du\,d\omega \text{ in }\mathbb{R}%
^{1+6}.  \label{E:boltz1}
\end{equation}%
The variables $u,v$ can be regarded as incoming velocities for a pair of
particles, $\omega \in \mathbb{S}^{2}$ is a parameter for the deflection
angle in the collision of these particles, and the outgoing velocities are $%
u^{\ast },v^{\ast }$: 
\begin{equation*}
u^{\ast }=u+[\omega \cdot (v-u)]\omega \text{ and }v^{\ast }=v-[\omega \cdot
(v-u)]\omega
\end{equation*}%
We adopt the usual gain term and loss term shorthands%
\begin{equation*}
Q(f,g)=Q^{+}(f,g)-Q^{-}(f,g)
\end{equation*}%
\begin{equation*}
Q^{+}(f,g)=\int_{\mathbb{S}^{2}}\int_{\mathbb{R}^{3}}f(v^{\ast })g(u^{\ast
})\,du\,d\omega
\end{equation*}%
\begin{equation*}
Q^{-}(f,g)=\int_{\mathbb{S}^{2}}\int_{\mathbb{R}^{3}}f(v)g(u)\,du\,d\omega
=4\pi f(v)\int_{\mathbb{R}^{3}}g(u)\,du
\end{equation*}%
and equation \eqref{E:boltz1} is invariant under the scaling 
\begin{equation}
f_{\lambda }(t,x,v)=\lambda ^{\alpha +2\beta }f(\lambda ^{\alpha -\beta
}t,\lambda ^{\alpha }x,\lambda ^{\beta }v)  \label{E:scaling condition}
\end{equation}%
for any $\alpha ,\beta \in \mathbb{R}$ and $\lambda >0$.

To draw a more direct connection between the analysis of (\ref{E:boltz1})
and the theory of nonlinear dispersive PDEs, we start with the linear part
of (\ref{E:boltz1}), which, upon passing to the inverse Fourier variable $%
v\mapsto \xi ,$ is the symmetric hyperbolic Schr\"{o}dinger equation 
\begin{equation}
i\partial _{t}\tilde{f}+\nabla _{\xi }\cdot \nabla _{x}\tilde{f}=0,
\label{E:01}
\end{equation}%
where $\tilde{f}(t,x,\xi )$ is the inverse Fourier transform \emph{in the
third (velocity) variable only}. Evolution of initial condition $\phi (x,\xi
)$ along (\ref{E:01}) will be denoted as $\tilde{f}=e^{it\nabla _{\xi }\cdot
\nabla _{x}}\phi $. Solutions to (\ref{E:01}) automatically satisfy
Strichartz estimates, as we shall review in Appendix \ref{A:scaling}, that 
\begin{equation}
\left\Vert \tilde{f}\right\Vert _{L_{t\in I}^{q}L_{x\xi }^{p}}\lesssim
\left\Vert \tilde{f}|_{t=0}\right\Vert _{L_{x\xi }^{2}}\text{, provided }%
\frac{2}{q}+\frac{6}{p}=3\,\text{,\ }q\geqslant 2  \label{e:Strichartz}
\end{equation}%
Thus one considers the Sobolev norms defined by 
\begin{equation*}
\Vert \tilde{f}\Vert _{H_{x\xi }^{s,r}}=\Vert \left\langle \nabla
_{x}\right\rangle ^{s}\left\langle \nabla _{\xi }\right\rangle ^{r}\tilde{f}%
\Vert _{L_{x\xi }^{2}}=\Vert \left\langle \nabla _{x}\right\rangle
^{s}\left\langle v\right\rangle ^{r}f\Vert _{L_{xv}^{2}}=\Vert f\Vert
_{L_{v}^{2,r}H_{x}^{s}}.
\end{equation*}%
However, not only is it instinctive to require the same regularity of $x$
and $\xi $ due to the symmetry in $x$ and $\xi $ in (\ref{E:01}), the
scaling invariance (\ref{E:scaling condition}) of (\ref{E:boltz1}) also
suggests\footnote{%
See a disscussion in Appendix \ref{A:scaling}.} that it is natural to take $%
s=r$ and seek to reach the scaling-invariant critical level\footnote{%
Instead of based on scaling invariance of equation, some people define the
critical regularity for the Boltzmann equation at $s=\frac{3}{2}$, the
continuity threshold.} of $s=\frac{1}{2}$, even though the nonlinear part of %
\eqref{E:boltz1} is not symmetric in $x$ and $\xi .$ Thus we will use the
Sobolev norm 
\begin{equation}
\Vert \tilde{f}\Vert _{H_{x\xi }^{s}}=\Vert \left\langle \nabla
_{x}\right\rangle ^{s}\left\langle \nabla _{\xi }\right\rangle ^{s}\tilde{f}%
\Vert _{L_{x\xi }^{2}}=\Vert \left\langle \nabla _{x}\right\rangle
^{s}\left\langle v\right\rangle ^{s}f\Vert _{L_{xv}^{2}}=\Vert f\Vert
_{L_{v}^{2,s}H_{x}^{s}}  \label{e:sobolev norm}
\end{equation}%
and we say (\ref{E:boltz1}) is $\dot{H}_{x\xi }^{\frac{1}{2}}$-invariant or $%
\dot{L}_{v}^{2,\frac{1}{2}}\dot{H}_{x}^{\frac{1}{2}}$-invariant (in scale).

The focus of the paper is on estimates in the Fourier restriction norm
spaces as in \cite{BE1,B1,KM,RR1}, directly associated with the $H_{x\xi
}^{s}$ space and the propagtor $e^{it\nabla _{\xi }\cdot \nabla _{x}}$, that
we now define. We will denote by $\eta $ the Fourier dual variable of $x$
and by $v$ the Fourier dual variable of $\xi $. The function $\hat{f}(\eta
,v)$ denotes the Fourier transform of $\tilde{f}(x,\xi )$ in both $x\mapsto
\eta $ and $\xi \mapsto v$, and is thus the Fourier transform of $f(x,v)$
itself in only $x\mapsto \eta $. The Fourier restriction norm spaces (or $X$
spaces) associated with equation \eqref{E:01} are 
\begin{equation}
\Vert \tilde{f}\Vert _{X_{s,b}}=\Vert \hat{f}(\tau ,\eta ,v)\langle \tau
+\eta \cdot v\rangle ^{b}\langle \eta \rangle ^{s}\langle v\rangle ^{s}\Vert
_{L_{\tau ,\eta ,v}^{2}},  \label{e:X norm}
\end{equation}%
and it is customary to define their finite time restrictions via%
\begin{equation*}
\Vert \tilde{f}\Vert _{X_{s,b}^{T}}=\inf \left\{ \left\Vert F\right\Vert
_{X_{s,b}}:F|_{\left[ -T,T\right] }=f\right\} .
\end{equation*}%
The form of the gain and loss operators in the $(x,\xi )$ variables are: 
\begin{eqnarray}
\tilde{Q}^{+}\left( \tilde{f},\tilde{g}\right) \left( \xi \right) &=&\int_{%
\mathbb{S}^{2}}\tilde{f}(\xi ^{+})\tilde{g}(\xi ^{-})d\omega ,
\label{eqn:bobylev for gain} \\
\tilde{Q}^{-}(\tilde{f},\tilde{g})(\xi ) &=&\tilde{f}(\xi )\tilde{g}(0)
\label{eqn:bobylev for loss}
\end{eqnarray}%
where $\xi ^{+}=\frac{1}{2}(\xi +\left\vert \xi \right\vert \omega )$ and $%
\xi ^{-}=\frac{1}{2}(\xi -\left\vert \xi \right\vert \omega )$, by the
well-known Bobylev identity \cite{B88}.

As there are the $(x,v)$ and $(x,\xi )$ sides of (\ref{E:boltz1}) in this
paper, to make things clear, we recall the definition of a strong solution
and well-posedness.

\begin{definition}
We say $f(t,x,v)$ is a strong $L_{v}^{2,s}H_{x}^{s}$ or $H_{x\xi }^{s}$
solution to (\ref{E:boltz1}) on $[-T,T]$ if $\tilde{f}\in X_{s,\frac{1}{2}%
+}^{T}$ (in particular, $\tilde{f}\in C([-T,T],H_{x\xi }^{s})$ and $f\in
C([-T,T],L_{v}^{2,s}H_{x}^{s})$) and satisfies%
\begin{equation}
\left( i\partial _{t}+\nabla _{\xi }\cdot \nabla _{x}\right) \tilde{f}=%
\tilde{Q}\left( \tilde{f},\tilde{f}\right) ,  \label{E:boltz2}
\end{equation}%
in which both sides are well-defined as $X_{s,-\frac{1}{2}+}^{T}$ functions
(in particular, $L_{\left[ -T,T\right] }^{1}H_{x\xi }^{s}$ or $L_{\left[ -T,T%
\right] }^{1}L_{v}^{2,s}H_{x}^{s}$ functions).
\end{definition}

\begin{definition}
\label{def:wellposedness}We say (\ref{E:boltz1}) is well-posed in $H_{x\xi
}^{s}$ or $L_{v}^{2,s}H_{x}^{s}$ if for each $R>0$, there exists a time $%
T=T(R)>0,$ such that all of the following are satisfied.

\begin{enumerate}
\item[(a)] (Existence) For each $f_{0}\in L_{v}^{2,s}H_{x}^{s}$ with $%
\left\Vert f_{0}\right\Vert _{L_{v}^{2,s}H_{x}^{s}}\leqslant R$. There is a $%
f(t,x,v)$ such that $f(t,x,v)$ is a strong $L_{v}^{2,s}H_{x}^{s}$ or $%
H_{x\xi }^{s}$ solution to (\ref{E:boltz1}) on $[-T,T]$. Moreover, $%
f(t,x,v)\geqslant 0$ if $f_{0}\geqslant 0$.

\item[(b)] (Conditional Uniqueness) Suppose $f$ and $g$ are two strong $%
L_{v}^{2,s}H_{x}^{s}$ or $H_{x\xi }^{s}$ solutions to (\ref{E:boltz1}) on $%
[-T,T]$ with $f|_{t=0}=g|_{t=0}$, then%
\begin{equation*}
f-g=\tilde{f}-\tilde{g}=0\text{ on }[-T,T]
\end{equation*}%
as $H_{x\xi }^{s}$ or $L_{v}^{2,s}H_{x}^{s}$ functions.

\item[(c)] (Uniform Continuity of the Solution Map)\footnote{%
One could replace (c) with the Lipschitz continuity which is usually the
case as well.} Suppose $f$ and $g$ are two strong $L_{v}^{2,s}H_{x}^{s}$ or $%
H_{x\xi }^{s}$ solutions to (\ref{E:boltz1}) on $[-T,T],$ $\forall
\varepsilon >0$, $\exists \delta (\varepsilon )$ independent of $f$ or $g$
such that%
\begin{equation}
\left\Vert f(t)-g(t)\right\Vert _{C([-T,T];{L_{v}^{2,s}H_{x}^{s}}%
)}<\varepsilon \text{ provided that }\left\Vert f(0)-g(0)\right\Vert
_{L_{v}^{2,s}H_{x}^{s}}<\delta (\varepsilon )  \label{eq:uniform continuity}
\end{equation}
\end{enumerate}
\end{definition}

In the context defined above, we have exactly found the separating index
between well/ill-posedness for (\ref{E:boltz1}) to be $s=1$ though (\ref%
{E:boltz1}) is actually $\dot{H}_{x\xi }^{\frac{1}{2}}$-invariant or $\dot{L}%
_{v}^{2,\frac{1}{2}}\dot{H}_{x}^{\frac{1}{2}}$-invariant (in scale).

\begin{theorem}[Main Theorem -- Well/ill-posedness]
\label{T:well-ill-posedness} \qquad \newline
\noindent (1) Equation \eqref{E:boltz1} is locally well-posed in $H_{x\xi
}^{s}$ or $L_{v}^{2,s}H_{x}^{s}$ for $s>1$.

\noindent (2) Equation \eqref{E:boltz1} is ill-posed in $H_{x\xi }^{s}$ or $%
L_{v}^{2,s}H_{x}^{s}$ for $\frac{1}{2}<s<1$ in the following senses:

\begin{enumerate}
\item[2a)] The data-to-solution map is not uniformly continuous and hence
(c) or (\ref{eq:uniform continuity}) in Definition \ref{def:wellposedness}
is violated. In particular, given any $t_{0}\in \mathbb{R}$, for each $M\gg
1 $, there exists a time sequence $\left\{ t_{0}^{M}\right\} _{M}$ such that 
$t_{0}^{M}<t_{0},$ $t_{0}^{M}\nearrow t_{0}$ and two solutions $f^{M}(t)$, $%
g^{M}(t)$ to equation \eqref{E:boltz1} in $\left[ t_{0}^{M},t_{0}\right] $
with $\Vert f^{M}(t_{0}^{M})\Vert _{L_{v}^{2,s}H_{x}^{s}},\Vert
g^{M}(t_{0}^{M})\Vert _{L_{v}^{2,s}H_{x}^{s}}\sim 1,$ such that $f^{M}(t)$, $%
g^{M}(t)$ are initially close at $t=t_{0}^{M}$ 
\begin{equation*}
\Vert f^{M}(t_{0}^{M})-g^{M}(t_{0}^{M})\Vert
_{L_{v}^{2,s}H_{x}^{s}}\leqslant \frac{1}{\ln M}\ll 1,
\end{equation*}%
but become fully separated at $t=t_{0}$ 
\begin{equation*}
\Vert f^{M}(t_{0})-g^{M}(t_{0})\Vert _{L_{v}^{2,s}H_{x}^{s}}\sim 1.
\end{equation*}

\item[2b)] Moreover, there exists a family of solutions having norm
deflation forward in time (and hence norm inflation backward in time). In
particular, given any $t_{0}\in \mathbb{R}$, for each $M\gg 1$, there exists
a solution $f_{\text{M}}$ to equation \eqref{E:boltz1} and a $%
T_{0}=T_{0}(M)<t_{0}$ such that 
\begin{equation*}
\Vert f_{\text{M}}(t_{0})\Vert _{L_{v}^{2,s}H_{x}^{s}}\sim \frac{1}{\ln M}%
\ll 1\text{ but }\Vert f_{\text{M}}(T_{0})\Vert _{L_{v}^{2,s}H_{x}^{s}}\sim 
\frac{M^{\delta }}{\ln M}\gg 1.
\end{equation*}
\end{enumerate}
\end{theorem}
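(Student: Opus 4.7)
The plan is to split Theorem \ref{T:well-ill-posedness} into the positive statement (1) and the negative statements (2a)--(2b), which require quite different tools. For (1), I would run a Bourgain-type fixed-point argument in $X^T_{s,1/2+}$. Starting from the Fourier-conjugated equation \eqref{E:boltz2} and its Duhamel form, the heart of the matter is a bilinear estimate
\[
\|\tilde{Q}(\tilde{f},\tilde{g})\|_{X^T_{s,-1/2+}} \lesssim T^{\theta}\|\tilde{f}\|_{X^T_{s,1/2+}}\|\tilde{g}\|_{X^T_{s,1/2+}}
\]
valid for some $\theta>0$ when $s>1$. For the gain piece in Bobylev form \eqref{eqn:bobylev for gain}, the orthogonality $\xi^+\cdot\xi^-=0$ acts as a null structure that I would exploit with the Strichartz estimate \eqref{e:Strichartz} and standard $X_{s,b}$ multiplier calculus. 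For the loss piece $\tilde{Q}^-(\tilde{f},\tilde{g})(\xi)=\tilde{f}(\xi)\tilde{g}(0)$ I would control the trace at $\xi=0$ by Sobolev embedding in $\xi$ (which requires just over one derivative, matching the threshold $s>1$) and then bound the product in $X_{s,-1/2+}^T$. Contraction then delivers existence, conditional uniqueness, and Lipschitz dependence on data; positivity is propagated via the usual integrating-factor reformulation $\partial_t(e^{L[f]}f)= e^{L[f]}Q^+(f,f)$ with $L[f] = \int_0^t 4\pi\int f\,du$.

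For (2), I would construct a single family of explicit ``special solutions'' $\{f_M\}_{M\gg 1}$ and read off both (2a) and (2b) from different choices within it. The ansatz I would try has the form
\[
\tilde{f}_M(t,x,\xi) = A_M\, e^{i\Phi_M(x,\xi)}\,\chi_M(x-t\xi,\xi),
\]
where $\chi_M$ has Fourier support concentrated at frequency $\sim M$ in both $x$ and $v$, the amplitude $A_M$ is small, and the phase $\Phi_M$ is tuned so that the Bobylev gain kernel resonates on the orthogonal set $\xi^+\cdot\xi^-=0$. Since $\chi_M(x-t\xi,\xi)$ is annihilated by $i\partial_t+\nabla_\xi\cdot\nabla_x$, the linear part of the equation is satisfied exactly, and the ansatz is upgraded to an exact solution by solving for a small remainder via contraction in $X_{s,1/2+}^T$ on a short window of length $\sim 1/\ln M$. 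Within this window, the gain-term interaction creates a new mode whose $H^s_{x\xi}$ mass grows from $\sim 1/\ln M$ at $t_0^M$ to $\sim 1$ at $t_0$ with a sign determined by $\Phi_M$; pairing two solutions $f^M, g^M$ whose phases differ by $\pi$ yields the non-uniform continuity in (2a). For (2b), the same mechanism run backward in time from $t_0$ to an earlier $T_0(M)$ and amplified by the same resonance delivers the $M^\delta/\ln M$ inflation.

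The main obstacle is Part (2). Because $1/2<s<1$ is strictly above the scaling-critical regularity, no rescaling can transfer information across $M$, so the full nonlinear analysis must be redone for each $M$, and the constants must track $M$-dependence sharply. The delicate quantitative point is that, on the window $[t_0-c/\ln M, t_0]$, the remainder to the ansatz must be shown to be much smaller than the nonlinearly-generated mode, while that mode itself must dominate the initial profile \emph{only} at the endpoint $t=t_0$ and not before. This forces a sharpened version of the bilinear estimate from Part (1) in which the orthogonality $\xi^+\cdot\xi^-=0$ is used not merely to close a contraction but to isolate the dominant resonant contribution with explicit dependence on $M$. Once this sharp estimate is in hand, propagation of regularity, the comparison with free-transport, and the readout of the norms at the two endpoints are comparatively routine.
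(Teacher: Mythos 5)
Your Part (1) is essentially the paper's argument: a contraction in $X_{s,\frac12+}$ built on bilinear estimates for $\tilde{Q}^{\pm}$, with the threshold $s>1$ coming from the loss term (the gain term is in fact good down to $s>\frac12$, cf.\ Theorem \ref{T:gain}); that part is fine.

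Part (2) has a genuine gap: you have identified the wrong nonlinear mechanism. You propose to drive the instability through a \emph{gain-term resonance} on the set $\xi^{+}\cdot\xi^{-}=0$, using a single modulated wave packet at frequency $\sim M$ in both variables, with two solutions distinguished by a phase shift of $\pi$. But the gain term satisfies the bilinear estimate \eqref{E:gain-1} for all $s>\frac12$, i.e.\ it is perturbative throughout the entire range $\frac12<s<1$ where ill-posedness is claimed; no tuning of phases can make a term that closes a contraction at regularity $s$ produce norm inflation at that same regularity. The obstruction at $s=1$ lives entirely in the loss term, whose estimate \eqref{E:loss-1} carries only \emph{one-sided} bilinear gains precisely because $Q^{-}(f,g)=4\pi f(v)\int g\,du$ is not of convolution type in $v$. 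Saturating it requires two very different profiles, not one self-interacting packet: a bump $f_{\text{r}}$ with tiny velocity support $|v|\lesssim N\leq M^{-1}$ and $x$-support $|x|\lesssim M^{-1}$, against a field $f_{\text{b}}$ of $\sim(MN_2)^{2}$ tubes covering the velocity annulus $|v|\sim N_2\gg 1$ in all directions. Then $\int f_{\text{b}}\,dv\sim (MN_2)^{1-s}$ on the support of $f_{\text{r}}$, and the ODE $\partial_t f_{\text{r}}=-Q^{-}(f_{\text{r}},f_{\text{b}})$ depletes (forward) or inflates (backward) $f_{\text{r}}$ exponentially on the time scale $(MN_2)^{s-1}\ll 1$ when $s<1$. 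Both 2a) and 2b) are read off from this exponential factor; non-uniform continuity comes from comparing $f_{\text{r}}+f_{\text{b}}$ (plus corrector) with the nearly stationary solution launched from $f_{\text{r}}(t_0)$ alone, not from a phase cancellation.

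A secondary but also substantive issue is your plan to close the perturbation around the ansatz by contraction in $X_{s,\frac12+}^{T}$. For $\frac12<s<1$ the loss estimate \eqref{E:loss-2} is simply false (that is the whole point), and even at $s=1$ it is unavailable, so the remainder cannot be controlled in those spaces. The paper instead runs the correction argument in a physical-space norm $Z$ built from $L_{v}^{2,1}H_{x}^{1}$ and $L_{v}^{1}L_{x}^{\infty}$ components, iterated over $\sim\ln M$ time steps of length $\delta(MN_2)^{s-1}$, using pointwise/geometric bounds on the tube interactions (notably transversality of the tubes to gain from the $t_0$-integration in $Q^{\pm}(f_{\text{b}},f_{\text{b}})$). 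Without replacing your $X_{s,b}$ closure by something of this kind, the remainder estimate does not go through.
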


The novelty of Theorem \ref{T:well-ill-posedness} is certainly the
ill-posedness / part (b). Part (a) is essentially included in \cite%
{CDP19a,CDP19b} already and is stated and proved here with different
estimates, namely \eqref{E:loss-2} and \eqref{E:gain-1}.

Due to the usual embedding $H^{s+}\hookrightarrow B_{2,1}^{s}\hookrightarrow
H^{s}$, Theorem \ref{T:well-ill-posedness} also proves the
well/ill-posedness separation at exactly $s=1$ for the Besov spaces (like
the ones in, for example, \cite{HS2,SS1}). The \textquotedblleft bad"
solutions we found are not the usually expected self-similar solutions, they
are actually impolsion / cavity like solutions. See Figure \ref{F:tubes} for
an illustration. 
\begin{figure}[h]
\includegraphics[scale=0.7]{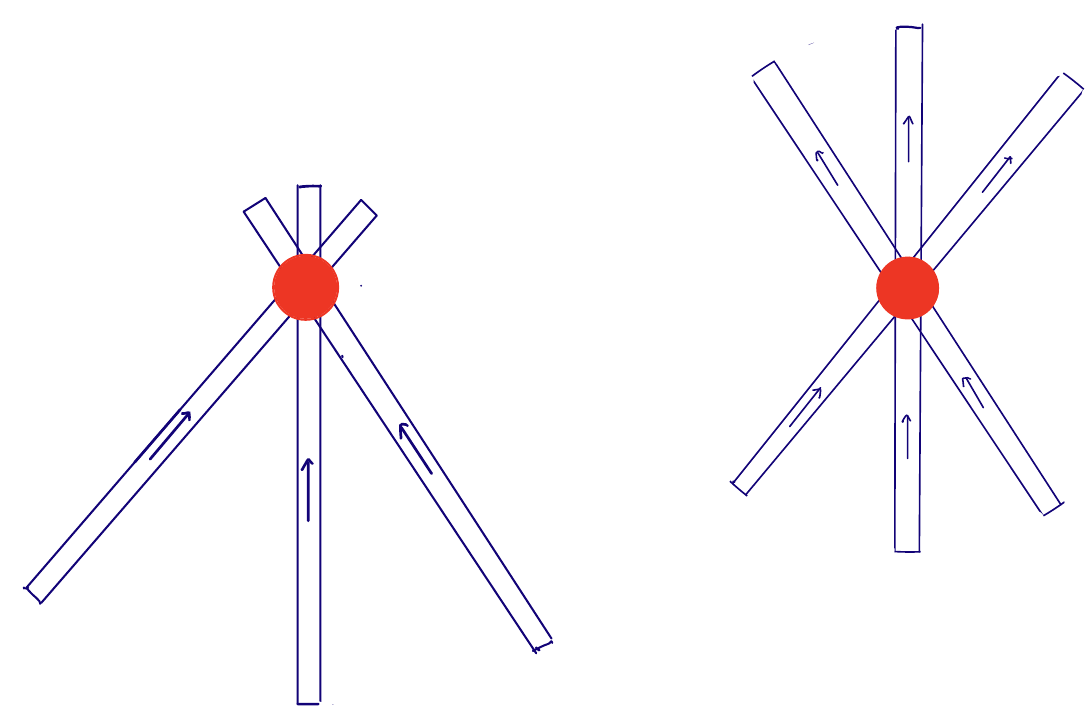}
\caption{Illustration at time $t<0$ and time $t=0$ of the $x$-support of $f_{%
\text{b}}(x,v,t)$ in blue and $f_{\text{r}}(x,v,t)$ in red. The function $f_{%
\text{b}}$ consists of $\sim (MN_{2})^{2}$ tubes; here three typical tubes
are depicted. Each tube moves in the direction of its long dimension. The
ill-posedness part of Theorem \protect\ref{T:well-ill-posedness} exploits
solutions of the form is $f=f_{\text{b}}+f_{\text{r}}+f_{\text{c}}$, where $%
f_{\text{c}}$ is a small correction. As time evolves forward, the loss term
between $f_{\text{b}}$ and $f_{\text{r}}$ drives the amplitude of $f_{\text{r%
}}$ downward exponentially fast.}
\label{F:tubes}
\end{figure}

Moreover, this family of "bad" solutions is uniformly bounded in mass,
variance and kinetic energy. That is, one would still conclude ill-posedness
in $L_{v}^{2,s}H_{x}^{s}$ via Theorem \ref{T:well-ill-posedness} even if one
assume uniformly bounded mass, variance and kinetic energy. (See Remark \ref%
{Remark: finite 2nd moment}.)

Per Theorem \ref{T:well-ill-posedness}, these impolsion / cavity like
solutions are obstacles to the well-posedness for $s<1$ while they do not
pose any problems for $s>1$\textbf{.} We wonder if this phenomenon is
related to the fact that, though more difficult to solve, the Boltzmann
equation is preferred over its various scaling limits, like the compressible
Euler equation for example, in some aspects of engineering like the
aerodynamics at hypersonic speed for example. These \textquotedblleft bad"
solutions arise from the sharpness examples of the bilinear $X_{s,b}$
estimate stated below as Theorem \ref{T:loss}.

\begin{theorem}[loss term bilinear estimate and sharpness example]
\label{T:loss} Let $\theta (t)$ be a smooth function such that $\theta (t)=1$
on $-1\leq t\leq 1$ and $\func{supp}\theta \subset \lbrack -2,2]$. For $\hat{%
f}$ supported in the dyadic regions $|\eta_{1}|\sim M_{1}\geq 1$, $%
|v_{1}|\sim N\geq 1$, and $\hat{g}$ supported in the dyadic regions $|\eta
_{2}|\sim M_{2}\geq 1$, $|v_{2}|\sim N_{2}\geq 1$, 
\begin{equation}
\| \theta(t) \tilde{Q}^{-}(\tilde f,\tilde g) \|_{X_{0,-\frac{1}{2}%
+}}\lesssim \min (M_{1},M_{2})N_{2}B_{M_{1},M_{2}}B_{N,N_{2}}\Vert \tilde{f}%
\Vert _{X_{0,\frac{1}{2}+}}\Vert \tilde{g}\Vert _{X_{0,\frac{1}{2}+}}
\label{E:loss-1}
\end{equation}%
where $B_{M_{1},M_{2}}$ and $B_{N,N_{2}}$ are the one-sided bilinear gain
factors 
\begin{equation*}
B_{M_{1},M_{2}}=%
\begin{cases}
\left( \frac{M_{1}}{M_{2}}\right) ^{1/2} & \text{if }M_{1}\leq M_{2} \\ 
1 & \text{if }M_{1}\geq M_{2}%
\end{cases}%
\,,\qquad B_{N,N_{2}}=%
\begin{cases}
\left( \frac{N_{2}}{N}\right) ^{1/2} & \text{if }N_{2}\leq N \\ 
1 & \text{if }N_{2}\geq N%
\end{cases}%
\end{equation*}%
It follows that for any $s>1$, for any $f$ and $g$ (without frequency
support restrictions), 
\begin{equation}
\| \theta(t) \tilde Q^{-}(\tilde f,\tilde g)\|_{X_{s,-\frac{1}{2}+}}\lesssim
\Vert \tilde{f}\Vert _{X_{s,\frac{1}{2}+}}\Vert \tilde{g}\Vert_{X_{s,\frac{1%
}{2}+}}  \label{E:loss-2}
\end{equation}
There exist functions $f$ and $g$ with $\hat f$ supported in the dyadic
regions $|\eta_1|\sim M_1$, $|v_1|\sim N$, and $\hat g$ supported in the
dyadic regions $|\eta_2|\sim M_2$, $|v_2|\sim N_2$, such that 
\begin{equation*}
\max(N,1)\ll N_2 \, \quad M_1 \geq 1 \,, \quad M_2 \geq 1
\end{equation*}
and 
\begin{equation}  \label{E:loss-sharp-1}
\|\theta(t) \tilde Q^-(\tilde f,\tilde g) \|_{X_{0,-b^{\prime }}} \gtrsim
\min(M_1,M_2) N_2 B_{M_1M_2} \|\tilde f\|_{X_{0,b_1}} \|\tilde
g\|_{X_{0,b_2}}
\end{equation}
for all $b^{\prime }, b_1, b_2 \in \mathbb{R}$.
\end{theorem}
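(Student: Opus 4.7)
The plan is to prove (\ref{E:loss-1}) by duality in Fourier variables, deduce (\ref{E:loss-2}) via dyadic summation, and construct the sharpness example (\ref{E:loss-sharp-1}) from explicit Fourier indicator functions.

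For (\ref{E:loss-1}), the starting observation is that by (\ref{eqn:bobylev for loss}) the loss term is a pointwise product in $(t,x)$: $\tilde Q^{-}(\tilde f,\tilde g)(t,x,\xi) = \tilde f(t,x,\xi)\,\tilde g(t,x,0)$, and $\tilde g(t,x,0)$ is independent of $\xi$. Hence the space-time Fourier transform of $\tilde Q^{-}$ reduces to a $4$-dimensional convolution in $(\tau,\eta)$ alone, with no convolution in $v$:
\begin{equation*}
\widehat{\tilde Q^{-}}(\tau,\eta,v) = c\int \hat f(\tau_1,\eta_1,v)\,G_{0}(\tau-\tau_1,\eta-\eta_1)\,d\tau_1\,d\eta_1,\qquad G_{0}(\tau,\eta):=\int \hat g(\tau,\eta,u)\,du.
\end{equation*}
We dualize (\ref{E:loss-1}) against $\tilde h\in X_{0,\frac{1}{2}-}$, absorb the $\langle\tau+\eta\cdot v\rangle$ weights into three $L^{2}$ densities $F,G,H$, and so reduce the claim to a trilinear integral estimate on the prescribed dyadic cubes.

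The $N_{2}^{1+}$ factor arises from bounding the $u$-integral defining $G_{0}$. By Cauchy--Schwarz against the weight,
\begin{equation*}
|G_{0}(\tau,\eta)|^{2}\leq\Big(\int_{|u|\sim N_{2}}\langle\tau+\eta\cdot u\rangle^{-1-}du\Big)\int|G(\tau,\eta,u)|^{2}du,\qquad G=\hat g\langle\tau+\eta\cdot v\rangle^{\frac{1}{2}+},
\end{equation*}
and slicing $\mathbb{R}_{u}^{3}$ by hyperplanes $\eta\cdot u=\textrm{const}$ yields, for $|\eta|\sim M_{2}\geq 1$, a slab integral $\lesssim N_{2}^{2}/M_{2}$; that is a pointwise bound $|G_{0}|\lesssim N_{2}M_{2}^{-1/2}\|G(\tau,\eta,\cdot)\|_{L^{2}_{u}}$. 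The residual $(\tau,\eta)$-convolution against $\hat f$ will be handled by a bilinear Strichartz estimate attached to the characteristic hypersurface $\{\tau+\eta\cdot v=0\}$ of (\ref{E:01}); in the standard high-low format this produces $B_{M_{1},M_{2}}^{1-}$, while the surplus $M_{2}^{-1/2}$ above combines with one Strichartz weight to convert the prefactor into $\min(M_{1},M_{2})^{1+}$. An analogous Cauchy--Schwarz on the output $v$-integral, exploiting that $\widehat{\tilde Q^{-}}$ inherits its $v$-support $|v|\sim N$ solely from $\hat f$, supplies $B_{N,N_{2}}^{1-}$: a gain $(N_{2}/N)^{1/2}$ when $N_{2}\leq N$, and none when $N_{2}\geq N$. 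The cutoff $\theta(t)$ contributes only a mild $\hat\theta$-convolution in $\tau$ absorbed by the $X_{s,b}$ scale. Given (\ref{E:loss-1}), inequality (\ref{E:loss-2}) follows by Littlewood--Paley decomposition: after dividing by $\langle\eta_{i}\rangle^{s}\langle v_{i}\rangle^{s}$, the losses $\min(M_{1},M_{2})^{1+}N_{2}^{1+}$ are absorbed by the Sobolev weights, with $B^{1-}$ supplying off-diagonal decay for Schur summation in $(M_{1},M_{2})$ and $(N,N_{2})$. The critical pairing sits on the $N_{2}\sim N$ diagonal, where convergence of the dyadic sum requires precisely $s>1$; the sharpness example saturates exactly this diagonal.

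The hardest step will be the sharpness example (\ref{E:loss-sharp-1}), which must hold for every $b',b_1,b_2\in\mathbb{R}$. The plan is to take
\begin{equation*}
\hat f\sim\chi_{|\tau+\eta\cdot v|<1}\chi_{|\eta-\eta_{1}^{0}|<1}\chi_{|v-v_{1}^{0}|<1},\qquad \hat g\sim\chi_{|\tau+\eta\cdot v|<1}\chi_{|\eta-\eta_{2}^{0}|<1}\chi_{|v-v_{2}^{0}|<1},
\end{equation*}
with $|\eta_{i}^{0}|\sim M_{i}$, $|v_{1}^{0}|\sim N$, $|v_{2}^{0}|\sim N_{2}$, and $N_{2}\gg\max(N,1)$. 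Because $|\tau+\eta\cdot v|\sim 1$ on these unit-measure supports, $\|\tilde f\|_{X_{0,b_{1}}}\sim\|\tilde g\|_{X_{0,b_{2}}}\sim 1$ uniformly in the $b$'s. Direct computation gives $G_{0}$ of amplitude $\sim M_{2}^{-1}$ on a $(\tau,\eta)$-tube of length $\sim M_{2}$; convolving against $\hat f$ places $\widehat{\tilde Q^{-}}$ on a region of $(\tau,\eta,v)$-measure $\sim 1$ where $|\tau+\eta\cdot v|\sim 1$, with amplitude $\sim\min(M_{1},M_{2})\,N_{2}\,M_{2}^{-1/2}=\min(M_{1},M_{2})\,N_{2}\,B_{M_{1},M_{2}}$. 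Since the characteristic weight is trivial there, $\|\tilde Q^{-}\|_{X_{0,-b'}}$ matches the stated lower bound uniformly in $b'$. The delicate point will be choosing $(\eta_{i}^{0},v_{i}^{0})$ generically so that the convolution output lands on such a measure-$1$ set with the weight trivialized; the assumption $N_{2}\gg\max(N,1)$ provides the required transversality between the $u$-slab defining $G_{0}$ and the $v$-support of $\hat f$.
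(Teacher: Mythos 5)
Your reduction of \eqref{E:loss-2} to \eqref{E:loss-1} by Littlewood--Paley summation and Schur's test is sound as far as it goes, but it is a different route from the paper, which proves \eqref{E:loss-2} directly from H\"older, Sobolev embedding, and the Strichartz estimates \eqref{e:Strichartz}, without ever invoking \eqref{E:loss-1}. This matters because your proof of \eqref{E:loss-1} itself is only a sketch: the Cauchy--Schwarz slab bound $|G_0|\lesssim N_2M_2^{-1/2}\|G\|_{L^2_u}$ is fine, but the step where ``a bilinear Strichartz estimate attached to the characteristic hypersurface'' produces $B_{M_1,M_2}^{1-}$ and the remaining factors is an assertion, not an argument. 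The symbol $\tau+\eta\cdot v$ of \eqref{E:01} is degenerate (the characteristic surface is ruled and has no curvature in many directions), so the standard bilinear refinements do not apply off the shelf; the paper explicitly states that the actual proof of \eqref{E:loss-1} requires a conic decomposition of $v$-space paired with a dual wedge decomposition of $\eta$-space. Since the paper defers that proof, I will not press the point, but your chain of logic makes \eqref{E:loss-2} contingent on it, whereas the paper's version of \eqref{E:loss-2} is self-contained.

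The genuine gap is in the sharpness example. Your single-bump ansatz (unit balls in $\eta$ and $v$ around $\eta_i^0$, $v_i^0$) cannot produce the lower bound $\min(M_1,M_2)\,N_2\,B_{M_1,M_2}$, for two reasons. First, the factor $N_2$ in the lower bound comes from saturating $\bigl|\int \hat g\,dv_2\bigr|\leq |{\rm supp}_{v_2}\hat g|^{1/2}\|\hat g\|_{L^2_{v_2}}$, which requires the $v_2$-support of $\hat g$ to fill essentially the whole annulus $|v_2|\sim N_2$ (volume $N_2^3$); a unit bump gives $\int\hat g\,dv_2\sim\|\hat g\|_{L^2_{v_2}}$ with no gain in $N_2$ whatsoever. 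Second, for a generic pair of bumps the output modulation $\tau+\eta\cdot v=(\tau_1+\eta_1\cdot v)+(\tau_2+\eta_2\cdot v_2)+\eta_2\cdot(v-v_2)$ contains the term $\eta_2\cdot(v-v_2)$, which spreads over an interval of length up to $M_2N_2$; the mass of $\widehat{\tilde Q^-}$ is then not concentrated at modulation $O(1)$, and the claimed amplitude $\min(M_1,M_2)N_2M_2^{-1/2}$ on a measure-one set does not materialize (a direct convolution count for your bumps gives amplitude $\sim M_2^{-1}$). Both defects are cured only by the structure the paper builds in: $\hat g=\hat\psi$ must be a sum over $J\sim M_2^2N_2^2$ tube pairs $A_j\times B_j$ with $\eta_2$ nearly perpendicular to $v_2$ on each piece (so that $\eta_2\cdot v_2=O(1)$ uniformly while the $v_2$-support covers the full $N_2$-annulus and the $\eta_2$-support covers the full $M_2$-annulus), together with $N\ll\max(M_1,M_2)^{-1}$ (or at least $N\ll N_2$) so that $v-v_2$ can be replaced by $-v_2$. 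Without this multi-tube, orthogonality-enforcing construction the example does not saturate \eqref{E:loss-1}, and --- since these tubes are exactly the ``bad'' solutions driving the ill-posedness in \S\ref{S:ill-posedness} --- the omission is not cosmetic.
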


We emphasize that the bilinear gain factors, typically present in these
types of estimates, are both only \emph{one-sided} in this case. This comes
from the fact that the loss operator $Q^{-}$ is highly nonsymmetric in $f$
and $g$, and not of convolution type in the variable $v$. Estimate (\ref%
{E:loss-2}), which is \eqref{E:loss-1} without the bilinear gain factors,
can be proved rather quickly using the Strichartz estimates, but to see the
``bad" solutions, one has to study (\ref{E:loss-1}) and how it becomes
saturated.

In contrast to the loss term, the gain term has bilinear estimates which
almost match the critical scaling $s=\frac{1}{2}$.

\begin{theorem}
\label{T:gain} Let $\theta (t)$ be a smooth function such that $\theta (t)=1$
on $-1\leq t\leq 1$ and $\func{supp}\theta \subset \lbrack -2,2]$. For all $%
s>\frac{1}{2}$, for any $f$ and $g$ (without frequency support
restrictions), 
\begin{equation}
\Vert \theta (t)\tilde{Q}^{+}(\tilde{f},\tilde{g})\Vert _{X_{s,-\frac{1}{2}%
+}}\lesssim \Vert \tilde{f}\Vert _{X_{s,\frac{1}{2}+}}\Vert \tilde{g}\Vert
_{X_{s,\frac{1}{2}+}}  \label{E:gain-1}
\end{equation}
\end{theorem}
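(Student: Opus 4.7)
The plan is to reduce \eqref{E:gain-1} by duality to a trilinear estimate, redistribute the Sobolev weights using the Bobylev orthogonality $\xi^{+}\perp\xi^{-}$ inherent in \eqref{eqn:bobylev for gain}, and close by invoking the Strichartz bounds \eqref{e:Strichartz} for the hyperbolic Schr\"odinger propagator $e^{it\nabla_{\xi}\cdot\nabla_{x}}$. By standard $X_{s,b}$-duality it suffices to bound
$$I=\int_{\mathbb{R}}\int_{\mathbb{R}^{3}_{x}}\int_{\mathbb{R}^{3}_{\xi}}\int_{\mathbb{S}^{2}}\theta(t)\,\tilde f(t,x,\xi^{+})\,\tilde g(t,x,\xi^{-})\,\overline{\tilde h(t,x,\xi)}\,d\omega\,d\xi\,dx\,dt$$
by $\|\tilde f\|_{X_{s,1/2+}}\|\tilde g\|_{X_{s,1/2+}}\|\tilde h\|_{X_{-s,1/2-}}$. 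Since $\xi=\xi^{+}+\xi^{-}$ with $\xi^{+}\cdot\xi^{-}=0$, one has $\langle\xi\rangle^{2}\leq 1+|\xi^{+}|^{2}+|\xi^{-}|^{2}\leq 2\max(\langle\xi^{+}\rangle,\langle\xi^{-}\rangle)^{2}$, and hence $\langle\xi\rangle^{s}\lesssim\langle\xi^{+}\rangle^{s}+\langle\xi^{-}\rangle^{s}$. Similarly, from $\eta=\eta_{1}+\eta_{2}$ one has $\langle\eta\rangle^{s}\lesssim\langle\eta_{1}\rangle^{s}+\langle\eta_{2}\rangle^{s}$.

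These two observations let me transfer the negative weights on $\tilde h$ coming from the $X_{-s,1/2-}$-norm onto $\tilde f$ and $\tilde g$; invoking the symmetry $\omega\mapsto -\omega$ (which swaps $\tilde f$ and $\tilde g$), I am left with the unweighted trilinear estimate
$$|I|\lesssim\|\tilde F\|_{X_{0,1/2+}}\|\tilde G\|_{X_{0,1/2+}}\|\tilde H\|_{X_{0,1/2-}},$$
where $\tilde F,\tilde G$ carry the $s$-derivatives shuffled off $\tilde h$. To establish this remaining estimate I would pair $\tilde H$ against $\tilde Q^{+}(\tilde F,\tilde G)$ via H\"older in $(t,x,\xi)$ and Cauchy-Schwarz on $\mathbb{S}^{2}$, then invoke the embedding $X_{0,1/2+}\hookrightarrow L^{q}_{t}L^{p}_{x\xi}$ for Strichartz-admissible pairs satisfying $\frac{2}{q}+\frac{6}{p}=3$ to place the three factors in compatible mixed Lebesgue spaces. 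The sphere-and-$\xi$ integral $\int\!\int_{\mathbb{S}^{2}}|\tilde F(\xi^{+})|^{2}|\tilde G(\xi^{-})|^{2}\,d\omega\,d\xi$ is controlled through the Bobylev change of variables $(\xi,\omega)\mapsto(\xi^{+},\xi^{-})$ constrained to the 5-dimensional orthogonal configuration $\{y\cdot z=0\}$, together with a 2D trace/restriction bound on hyperplanes $\xi^{-}\in(\xi^{+})^{\perp}$. The Sobolev-trace threshold $H^{1/2+}(\mathbb{R}^{3})\hookrightarrow L^{2}(\text{plane})$ is exactly where the requirement $s>\tfrac{1}{2}$ (rather than $s\geq\tfrac{1}{2}$) comes from.

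The main obstacle I expect is the sphere integral step: executing the Bobylev change of variables cleanly and marrying the resulting orthogonal-configuration integral against the Strichartz space-time norms produced by $X_{0,1/2+}$ without losing more than an $\varepsilon$ derivative, so that the threshold indeed sits at $s=\tfrac{1}{2}$. Once this trace-Strichartz interplay is made quantitative, the remaining dyadic frequency summation and the handling of the modulation weights $\langle\tau+\eta\cdot v\rangle$ in the $X_{0,b}$-norms are routine, in contrast to the loss-term estimate \eqref{E:loss-1} where the sharp tracking of bilinear gain factors $B_{M_{1},M_{2}}$, $B_{N,N_{2}}$ becomes essential.
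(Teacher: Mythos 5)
Your outline has the right ingredients (Littlewood--Paley, Strichartz for $e^{it\nabla_x\cdot\nabla_\xi}$, an Alonso--Carneiro--type H\"older bound for $\tilde Q^+$, and a half-derivative restriction phenomenon behind $s>\tfrac12$), but two steps as written do not hold up. First, the weight you distribute in the $\xi$-slot is the wrong one. The $X_{s,b}$ norm carries $\langle v\rangle^s=\langle\nabla_\xi\rangle^s$, a Fourier multiplier acting on $\tilde f(x,\xi)$, not pointwise multiplication by $\langle\xi\rangle^s$; so the (true) inequality $\langle\xi\rangle^s\lesssim\langle\xi^+\rangle^s+\langle\xi^-\rangle^s$ coming from $\xi=\xi^++\xi^-$, $\xi^+\perp\xi^-$ does not transfer the norm weight. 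What is needed is a Leibniz/paraproduct rule for $\langle\nabla_\xi\rangle^s$ applied to $\xi\mapsto\tilde f(\xi^+)\tilde g(\xi^-)$; the paper gets this in Lemma \ref{Lemma:FreqHolderForGain} from the chain rule with $|\partial_{\xi_j}\xi^\pm|\leq 1$, Bernstein, and interpolation (equivalently, from energy conservation $|v|^2\leq|u^*|^2+|v^*|^2$ on the $(x,v)$ side).

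Second, and more seriously, the reduction to the unweighted trilinear bound $|I|\lesssim\|\tilde F\|_{X_{0,\frac12+}}\|\tilde G\|_{X_{0,\frac12+}}\|\tilde H\|_{X_{0,\frac12-}}$ is a reduction to a false (or at least unreachable) estimate, and it is internally inconsistent with your later claim that $s>\tfrac12$ enters through a trace bound. If all $s$ derivatives have been shuffled onto $\tilde F,\tilde G$ so that they are normalized in $X_{0,\frac12+}$, nothing is left to pay for the restriction to the spheres/planes $\xi^\pm$. Concretely, Strichartz \eqref{e:Strichartz} only reaches $L^q_tL^p_{x\xi}$ with $p\leq 3$, so two factors give $\tfrac1p+\tfrac1q\geq\tfrac23$ and the H\"older bound of Lemma \ref{Lemma:HolderLikeInequalityForGain} lands the output in $L^{3/2}_\xi$ at best, never in $L^2_\xi$; closing requires putting the lower-frequency factor in $L^6_{x\xi}$, which costs $\langle\nabla_x\rangle^{1/2}\langle\nabla_\xi\rangle^{1/2}$ of that factor's own regularity. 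This is exactly how the paper's proof is organized: the full $\langle\nabla\rangle^s$ goes on the high-frequency factor, the low-frequency factor spends $\tfrac12$ of its derivatives on Sobolev into $L^6_{x\xi}$, and the leftover powers $N_2^{-(s-1/2)}M_2^{-(s-1/2)}$ (requiring $s>\tfrac12$ strictly) drive the dyadic summation via Cauchy--Schwarz and Schur's test. Your proposal omits this two-sided expenditure of derivatives and the summation it enables, which is the crux of the argument; the paper also dispenses with duality entirely, working directly with the frequency-localized H\"older inequality and its Bernstein gains $\min(1,\max(M_1,M_2)/M)$, $\min(1,\max(N_1,N_2)/N)$.
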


The $s=\frac{1}{2}$ case of (\ref{E:gain-1}) might also be correct, but it
cannot change the ill-posedness facts even if it is, due to the loss term.
That is, in low regularity settings, the gain term is like an error,
compared to the loss term. Or in other words, the long expected cancelation
between the gain and loss terms does not happen. Such a disparity between
the gain term and the loss term ultimately creates Theorem \ref%
{T:well-ill-posedness}.

\subsection{Organization of the Paper}

The novelty of this paper lies in Theorem \ref{T:well-ill-posedness} which
establishes the well/ill-posedness seperation and proves the ill-posedness.
Because the ill-posedness happens at scaling subcritical regime, ordinary
scaling or perturbative methods for scaling critical or supercritical
regimes fail here. We thus need new ideas addressing the full nonlinear
equation. The 1st step would be getting sharp estimates.

In \S \ref{S:loss}, we prove \eqref{E:loss-2} in Theorem \ref{T:loss} by
appealing to the Strichartz estimates (reviewed in Appendix \ref{A:scaling})
and prove the sharpness of (\ref{E:loss-1}) from which the illposedness
originates. In \S \ref{S:gain}, we prove the gain term estimates in Theorem %
\ref{T:gain} by appealing to a H\"{o}lder type estimate in \cite{AC10}
together with a Littlewood-Paley decomposition.\footnote{%
We end up simplifying the well-posedness estimates in \cite{CDP19a,CDP19b} a
little bit and reducing the regularity requirement for the gain term, as
co-products.} For completeness, we apply in the short section \S \ref%
{S:well-posedness}, the loss term estimate in Theorem \ref{T:loss} and the
gain term estimate in Theorem \ref{T:gain} to prove the well-posedness part
of Theorem \ref{T:well-ill-posedness}.

We prove the illposedness part of Theorem \ref{T:well-ill-posedness} in \S %
\ref{S:ill-posedness}. We work on the more usual $(x,v)$ side of %
\eqref{E:boltz1} throughout \S \ref{S:ill-posedness} but it is still very
much based on $X_{s,b}$ theory. As the proof involves many delicate
computations, we 1st provide a heuristic using the sharpness example of (\ref%
{E:loss-1}), suitably scaled, in \S \ref{s:motivate}, then we present the
approximate solutions / ansatz $f_{a}$ we are going to use in \S \ref%
{sec:precise forumulation of bad}.\footnote{%
Both of the examples in \S \ref{sec:sharpness} and \S \ref{sec:precise
forumulation of bad} maximize (\ref{E:loss-1}). The one in \S \ref%
{sec:sharpness} is more straight forward for the purpose of maximizing (\ref%
{E:loss-1}).} We use a specialized perturbative argument, built around $%
f_{a} $, to prove that there is an exact solution to \eqref{E:boltz1} which
is mostly $f_{a}$. We put down the tools in \S \ref{sec:property of f_a} to 
\S \ref{sec:z-norm bilinear} to conclude the perturbative argument in \S \ref%
{sec:perturbative}. During the estimate of the error terms, the proof
involves many geometric techniques, like the ones in $X_{s,b}$ theory (See,
for example, \cite{CDKS,KMRemarks})$,$ on the fine nonlinear interactions. A
good example would be \S \ref{sec:Q+-(f_b,f_b)}. Such a connection between
the analysis of (\ref{E:boltz1}) and the dispersive equations might be
highly nontrivial and deserves further investigations. We then conclude the
ill-posedness and the norm deflation in Corollaries \ref{C:norm_deflation}
and \ref{C:data_to_sol}. Our method is general and can be pushed further:
see \cite{CSZ2} for a generalization to well/ill-posedness separation with
general kernels, and then \cite{CSZ3} for a sharp global well-posedness
developed from this paper. Moreover, the derivation of the Boltzmann
equation\ is also related to the well/ill-posedness threshold we found in
the sense of \cite{CH10}.

After the proof of the main theorem in \S \ref{S:ill-posedness}, we include
in Appendix \ref{A:scaling} a discussion of the scaling properties of %
\eqref{E:boltz1} and review the Strichartz estimates, and in Appendix \ref%
{B:BilinearProof}, a full proof of a more manageable version of (\ref%
{E:loss-1}) capturing its salient features. We have not used (\ref{E:loss-1}%
) in this paper but its sharpness has motivated the construction of the
example generating the ill-posedness. Some additional comments are included
in the arxiv.org version of this paper.

\medskip

\begin{flushleft}
\textbf{Acknowledgement}. The authors would like to thank Thomas Chen, Ryan
Denlinger, Yan Guo, Kenji Nakanishi, Nata\v{s}a Pavlovi\'{c}, and Tong Yang
for encouraging and delightful discussions and the anonymous referee for
kind and helpful comments regarding this work. The 1st author was supported
in part by NSF grant DMS-2005469 and a Simons Fellowship numbered 916862.
The 2nd author was partially supported by NSF grant DMS-2055072.
\end{flushleft}

\medskip

\begin{flushleft}
\textbf{Conflict of Interest Statement}. The authors declare there are no
conflicts of interest.
\end{flushleft}

\section{Loss term bilinear estimate and its sharpness\label{S:loss}}

In this section, we prove (\ref{E:loss-2}), the simpler version of the full
optimal bilinear estimate (\ref{E:loss-1}), and establish the sharpness of (%
\ref{E:loss-1}). The proof of (\ref{E:loss-1}) is highly technical as it
needs a lengthy and direct analysis of frequency interactions involving a
conic decomposition of $v$-space and a dual wedge decomposition of $\eta $%
-space. Though the sharpness of (\ref{E:loss-1}) has made the illposedness
possible, we have not used it in this paper. Thus, we prove, for
completeness, a version of (\ref{E:loss-1}) which allows a shorter proof in
Appendix \ref{B:BilinearProof}. The logical order, as it has happened, was,
proving (\ref{E:loss-1}) $\rightarrow $ finding examples to saturate (\ref%
{E:loss-1}) $\rightarrow $ ill-posedness, and one would not have come up
with the ill-posedness without the 1st step.

\subsection{Proof of (\protect\ref{E:loss-2})}

It suffices to prove%
\begin{equation*}
\left\Vert \tilde{Q}^{-}(\tilde{f},\tilde{g})\right\Vert _{L_{t}^{2}H_{x\xi
}^{s}}\lesssim \Vert \tilde{f}\Vert _{X_{s,\frac{1}{2}+}}\Vert \tilde{g}%
\Vert _{X_{s,\frac{1}{2}+}}
\end{equation*}%
for $s>1$. Recall (\ref{eqn:bobylev for loss}), 
\begin{equation*}
\left\Vert \tilde{Q}^{-}(\tilde{f},\tilde{g})\right\Vert _{L_{t}^{2}H_{x\xi
}^{s}}=\left\Vert \left\langle \nabla _{x}\right\rangle ^{s}\left(
\left\langle \nabla _{\xi }\right\rangle ^{s}\tilde{f}(t,x,\xi )\tilde{g}%
(t,x,0)\right) \right\Vert _{L_{t}^{2}L_{x\xi }^{2}}
\end{equation*}%
Splitting into the two cases in which $\tilde{f}$ or $\tilde{g}$ has the
dominating frequency, we have%
\begin{eqnarray*}
\left\Vert \tilde{Q}^{-}(\tilde{f},\tilde{g})\right\Vert _{L_{t}^{2}H_{x\xi
}^{s}} &\lesssim &\left\Vert \left( \left\langle \nabla _{x}\right\rangle
^{s}\left\langle \nabla _{\xi }\right\rangle ^{s}\tilde{f}(t,x,\xi )\right) 
\tilde{g}(t,x,0)\right\Vert _{L_{t}^{2}L_{x\xi }^{2}} \\
&&+\left\Vert \left( \left\langle \nabla _{\xi }\right\rangle ^{s}\tilde{f}%
(t,x,\xi )\right) \left\langle \nabla _{x}\right\rangle ^{s}\tilde{g}%
(t,x,0)\right\Vert _{L_{t}^{2}L_{x\xi }^{2}}
\end{eqnarray*}%
Apply H\"{o}lder,%
\begin{eqnarray*}
\left\Vert \tilde{Q}^{-}(\tilde{f},\tilde{g})\right\Vert _{L_{t}^{2}H_{x\xi
}^{s}} &\lesssim &\left\Vert \left\langle \nabla _{x}\right\rangle
^{s}\left\langle \nabla _{\xi }\right\rangle ^{s}\tilde{f}(t,x,\xi
)\right\Vert _{L_{t}^{\infty }L_{x\xi }^{2}}\left\Vert \tilde{g}(t,x,\xi
)\right\Vert _{L_{t}^{2}L_{x\xi }^{\infty }} \\
&&+\left\Vert \left\langle \nabla _{\xi }\right\rangle ^{s}\tilde{f}(t,x,\xi
)\right\Vert _{L_{t}^{\infty }L_{x}^{6}L_{\xi }^{2}}\left\Vert \left\langle
\nabla _{x}\right\rangle ^{s}\tilde{g}(t,x,\xi )\right\Vert
_{L_{t}^{2}L_{x}^{3}L_{\xi }^{\infty }}
\end{eqnarray*}%
Use $s>1$ and apply Sobolev,

\begin{equation*}
\left\Vert \tilde{Q}^{-}(\tilde{f},\tilde{g})\right\Vert _{L_{t}^{2}H_{x\xi
}^{s}}\lesssim \left\Vert \left\langle \nabla _{x}\right\rangle
^{s}\left\langle \nabla _{\xi }\right\rangle ^{s}\tilde{f}(t,x,\xi
)\right\Vert _{L_{t}^{\infty }L_{x\xi }^{2}}\left\Vert \left\langle \nabla
_{x}\right\rangle ^{s}\left\langle \nabla _{\xi }\right\rangle ^{s}\tilde{g}%
(t,x,\xi )\right\Vert _{L_{t}^{2}L_{x\xi }^{3}}
\end{equation*}%
Apply Strichartz (\ref{e:Strichartz}), we have%
\begin{equation*}
\left\Vert \tilde{Q}^{-}(\tilde{f},\tilde{g})\right\Vert _{L_{t}^{2}H_{x\xi
}^{s}}\lesssim \Vert \tilde{f}\Vert _{X_{s,\frac{1}{2}+}}\Vert \tilde{g}%
\Vert _{X_{s,\frac{1}{2}+}}
\end{equation*}%
as claimed.

\subsection{Proof of Sharpness\label{sec:sharpness}}

At this point, we turn to the sharpness example. We will consider $|\eta
_{2}|\sim M_{2}$ and $|v_{2}|\sim N_{2}$, with $M_{2}\gg 1$ and $N_{2}\gg 1$
dyadic. On the unit sphere, lay down a grid of $J\sim M_{2}^{2}N_{2}^{2}$
points $\{e_{j}\}_{j=1}^{J}$, where the points $e_{j}$ are roughly equally
spaced and each have their own neighborhood of unit-sphere surface area $%
\sim M_{2}^{-1}N_{2}^{-1}$. Let $P_{e_{j}}$ denote the orthogonal projection
onto the 1D subspace spanned by $e_{j}$. Let $P_{e_{j}}^{\perp }$ denote the
orthogonal projection onto the 2D subspace $\func{span}\{e_{j}\}^{\perp }$.


For each $e_j$, let $B_j$ denote, in $v_2$ space, the conic neighborhood of $%
e_j$ obtained by taking all radial rays passing though the patch of surface
area $M_2^{-1}N_2^{-1}$ on the unit sphere around the vector $e_j$. We can
describe this as the cone with vertex $e_j$ and angular aperture $%
M_2^{-1}N_2^{-1}$. When this cone, in $v_2$-space, intersects the dyadic
annulus at radius $\sim N_2$, it creates a shape that is approximately a
cube with long side $N_2$ and two shorter dimensions each of length $%
M_2^{-1} $. For convenience we think of it as a cube with shape $%
M_2^{-1}\times M_2^{-1}\times N_2$, with the line through $e_j$ passing
through the axis and parallel to the long side.

For each $e_{j}$, in $\eta _{2}$ space, let $A_{j}$ consist of all vectors $%
\eta _{2}$ that are \textquotedblleft nearly
perpendicular\textquotedblright\ to $e_{j}$, in the sense that the cosine of
the angle between $e_{j}$ and $\eta _{2}/|\eta _{2}|$ is 
\begin{equation*}
\cos (e_{j},\frac{\eta _{2}}{|\eta _{2}|})\lesssim \frac{1}{M_{2}N_{2}}
\end{equation*}%
Since $\sin (90^{\circ }-\theta )=\cos (\theta )$, this means that the angle
between $e_{j}$ and any vector $\eta _{2}\in A_{j}$ is within $%
M_{2}^{-1}N_{2}^{-1}$ of $90$ degrees. This property then clearly extends to
replacing $e_{j}$ with any vector in $B_{j}$. We can then visualize $B_{j}$,
projected onto the unit sphere, as being obtained by taking the plane
perpendicular to $e_{j}$, calling that the \textquotedblleft
equator\textquotedblright , and then taking the band that is width $%
M_{2}^{-1}N_{2}^{-1}$ around that equator. When $A_{j}$ is intersected with
the dyadic annulus at radius $\sim M_{2}$, this set looks like a thickened
plane of width $N_{2}^{-1}$, so it has approximate dimensions $M_{2}\times
M_{2}\times N_{2}^{-1}$, where the $M_{2}\times M_{2}$ planar part is
perpendicular to $e_{j}$. ($A_{j}$ looks like the so-called bevelled washer.)

The example proving \eqref{E:loss-sharp-1} in Theorem \ref{T:loss} is
produced as follows. Let $\hat\chi(\eta)$ (or $\hat\chi(v)$) be a smooth
nonnegative compactly supported function such that $\hat \chi(0)=1$. We
present the functions $\tilde \phi$ and $\tilde \psi$ as normalized in $%
L^2_xL^2_\xi$.

\begin{equation*}
\hat{\phi}(\eta _{1},v)=\frac{1}{M_{1}^{3/2}N^{3/2}}\hat{\chi}(\frac{\eta
_{1}}{M_{1}})\hat{\chi}(\frac{v}{N})
\end{equation*}%
\begin{equation*}
\hat{\psi}(\eta _{2},v_{2})=\frac{1}{M_{2}N_{2}}\sum_{j=1}^{J}\hat{\chi}%
\left( \frac{P_{e_{j}}^{\perp }\eta _{2}}{M_{2}}\right) \hat{\chi}%
(N_{2}P_{e_{j}}\eta _{2})\hat{\chi}(M_{2}P_{e_{j}}^{\perp }v_{2})\hat{\chi}(%
\frac{P_{e_{j}}v_{2}}{N_{2}})
\end{equation*}%
where $J\sim M_{2}^{2}N_{2}^{2}$. Since we are proving a lower bound, we can
select any normalized dual function $\tilde{\zeta}\in L_{x}^{2}L_{\xi }^{2}$%
. We select 
\begin{equation*}
\hat{\zeta}(\eta ,v)=\frac{1}{\max (M_{1},M_{2})^{3/2}N^{3/2}}\hat{\chi}(%
\frac{\eta }{\max (M_{1},M_{2})})\hat{\chi}(\frac{v}{N})
\end{equation*}%
Then let 
\begin{equation}
I=\int_{\eta }\int_{\eta _{2}}\int_{v}\int_{v_{2}}\hat{\theta}(-\eta
_{2}\cdot (v-v_{2}))\,\hat{\phi}(\eta -\eta _{2},v)\,\hat{\psi}(\eta
_{2},v_{2})\,\overline{\hat{\zeta}(\eta ,v)}\,dv_{2}\,dv\,d\eta _{2}\,d\eta
\label{E:sharp01}
\end{equation}%
and we aim to prove 
\begin{equation*}
I\gtrsim \min (M_{1},M_{2})N_{2}B_{M_{1}M_{2}}\Vert \hat{\phi}\Vert
_{L_{\eta _{1}v}^{2}}\Vert \hat{\psi}\Vert _{L_{\eta _{2}v_{2}}^{2}}\Vert 
\hat{\zeta}\Vert _{L_{\eta v}^{2}}
\end{equation*}%
We are restricting to $M_{1}\geq 1$, $M_{2}\geq 1$ and $N\leq M^{-1}\ll 1\ll
N_{2}$ but otherwise do not assume any relationship between $M_{1}$ and $%
M_{2}$. The argument can be extended to weaken the restriction on $N$ to
only $N\ll N_{2}$.

To carry out the computation, we need to introduce $C_{k}$, subsets of $\eta
_{2}$ space, which are cones\footnote{%
These cone-like decompositions are widely used in the analysis of dispersive
equations. See, for example, \cite{CDKS,KMRemarks,Ta1}.} with angular
aperture $M_{2}^{-1}N_{2}^{-1}$ with vertex vector $e_{k}$. (That is, $C_{k}$
for $k=1,\ldots ,M_{2}^{2}N_{2}^{2}$ is the same type of decomposition of $%
\eta _{2}$ space as the decomposition $B_{j}$, $j=1,\ldots
,M_{2}^{2}N_{2}^{2}$ of $v_{2}$-space, except that in the case of $\{C_{k}\}$%
, the cones intersect the $M_{2}$ dyad, and in the case $\{B_{j}\}$, the
cones intersect the $N_{2}$ dyad.). The cone $C_{k}$ intersects the $M_{2}$
dyad in $\eta _{2}$ space in a tube of geometry $N_{2}^{-1}\times
N_{2}^{-1}\times M_{2}$, whereas the cone $B_{j}$ intersects the $N_{2}$
dyad in $v_{2}$ space in a tube of geometry $M_{2}^{-1}\times
M_{2}^{-1}\times N_{2}$.

For each $j$ and corresponding direction $e_j$, there are $\sim M_2N_2$
directions $e_k$ that are perpendicular to $e_j$ -- let us call this set $%
K(j)$. Then we have that for each $j$, the thickened planes/beveled washers $%
A_j$ are: 
\begin{equation*}
A_j = \bigcup_{k\in K(j)} C_k
\end{equation*}
We can now look at this in the other direction as well -- fix $k$ and
corresponding direction $e_k$. There are $\sim M_2N_2$ directions $e_j$ that
are perpendicular to $e_k$ -- call this set of indices $J(k)$. The union of
these becomes a thickened plane/beveled washer in $v_2$ space.

Now we carry out the integral in \eqref{E:sharp01} by first noting that
since $N\ll N_{2}$, $v_{2}-v$ can effectively be replaced by $v_{2}$ in $%
\hat{\theta}(\eta _{2}\cdot (v_{2}-v))$. This allows us to simply carry out
the $v$ and $v_{2}$ integration after enforcing the orthogonality of $v_{2}$
and $\eta _{2}$ through restriction to appropriate sets $B_{j}$ and $C_{k}$,
respectively. 
\begin{equation}
I=\int_{\eta }\int_{\eta _{2}}\left( \int_{v}\hat{\phi}(\eta -\eta _{2},v)%
\hat{\zeta}(\eta ,v)\,dv\sum_{k}\boldsymbol{1}_{C_{k}}(\eta _{2})\sum_{j\in
J(k)}\int_{v_{2}}\boldsymbol{1}_{B_{j}}(v_{2})\hat{\psi}(\eta
_{2},v_{2})\,dv_{2}\right) \,d\eta _{2}\,d\eta  \label{E:sharp02}
\end{equation}%
The components inside (\ref{E:sharp02}) are 
\begin{equation*}
\int_{v}\hat{\phi}(\eta -\eta _{2},v)\hat{\zeta}(\eta ,v)\,dv=\frac{1}{%
M_{1}^{3/2}\max (M_{1},M_{2})^{3/2}}\hat{\chi}(\frac{\eta -\eta _{2}}{M_{1}})%
\hat{\chi}(\frac{\eta }{\max (M_{1},M_{2})})
\end{equation*}%
and 
\begin{equation}
\sum_{k}\sum_{j\in J(k)}\boldsymbol{1}_{C_{k}}(\eta _{2})\int_{v_{2}}%
\boldsymbol{1}_{B_{j}}(v_{2})\hat{\psi}(\eta _{2},v_{2})\,dv_{2}=\frac{1}{%
M_{2}^{3}}\sum_{k}\sum_{j\in J(k)}\boldsymbol{1}_{C_{k}}(\eta _{2})\hat{\chi}%
\left( \frac{P_{e_{j}}^{\perp }\eta _{2}}{M_{2}}\right) \hat{\chi}%
(N_{2}P_{e_{j}}\eta _{2})  \label{eqn:component2}
\end{equation}%
For (\ref{eqn:component2}), consider that $\hat{\chi}\left( \frac{%
P_{e_{j}}^{\perp }\eta _{2}}{M_{2}}\right) \hat{\chi}(N_{2}P_{e_{j}}\eta
_{2})$ is basically $\boldsymbol{1}_{A_{j}}(\eta _{2})$, but since $e_{k}$
and $e_{j}$ are perpendicular, it follows that $C_{k}\subset A_{j}$ and $%
\boldsymbol{1}_{C_{k}}(\eta _{2})\boldsymbol{1}_{A_{j}}(\eta _{2})=%
\boldsymbol{1}_{C_{k}}(\eta _{2})$. Thus we continue the evaluation as 
\begin{align*}
\hspace{0.3in}& \hspace{-0.3in}\sum_{k}\sum_{j\in J(k)}\boldsymbol{1}%
_{C_{k}}(\eta _{2})\int_{v_{2}}\boldsymbol{1}_{B_{j}}(v_{2})\hat{\psi}(\eta
_{2},v_{2})\,dv_{2}=\frac{1}{M_{2}^{3}}\sum_{k}\sum_{j\in J(k)}\boldsymbol{1}%
_{C_{k}}(\eta _{2}) \\
& =\frac{1}{M_{2}^{3}}\sum_{k}\func{card}J(k)\boldsymbol{1}_{C_{k}}(\eta
_{2})=\frac{N_{2}}{M_{2}^{2}}\sum_{k}\boldsymbol{1}_{C_{k}}(\eta _{2})=\frac{%
N_{2}}{M_{2}^{2}}\hat{\chi}(\frac{\eta _{2}}{M_{2}})
\end{align*}%
where $\func{card}(A)$ of a set $A$ means the cardinality of set $A$. That
is, since $\{C_{k}\}$ just partitions the whole $M_{2}$ dyad, we end up with
simply the projection onto the whole dyad. Plugging these results back into %
\eqref{E:sharp02}, 
\begin{equation*}
I=\int_{\eta }\int_{\eta _{2}}\frac{1}{M_{1}^{3/2}\max (M_{1},M_{2})^{3/2}}%
\hat{\chi}(\frac{\eta -\eta _{2}}{M_{1}})\hat{\chi}(\frac{\eta }{\max
(M_{1},M_{2})})\frac{N_{2}}{M_{2}^{2}}\hat{\chi}(\frac{\eta _{2}}{M_{2}}%
)\,d\eta _{2}\,d\eta
\end{equation*}%
Now we revert back to the spatial side via Plancherel: 
\begin{equation*}
I=M_{1}^{3/2}\max (M_{1},M_{2})^{3/2}M_{2}N_{2}\int_{x}\chi (M_{1}x)\chi
(\max (M_{1},M_{2})x)\chi (M_{2}x)\,dx
\end{equation*}%
The function $\chi (\max (M_{1},M_{2})x)$ is redundant, and the $x$-integral
evaluates to $\max (M_{1},M_{2})^{-3}$. Thus 
\begin{equation*}
I=\frac{M_{1}^{3/2}M_{2}N_{2}}{\max (M_{1},M_{2})^{3/2}}=\min
(M_{1},M_{2})N_{2}B_{M_{1}M_{2}}
\end{equation*}%
which is precisely the lower bound in the statement of Theorem \ref{T:loss}.

\section{Gain term bilinear estimate\label{S:gain}}

We prove Theorem \ref{T:gain} in this section. We will need the following
lemma.

\begin{lemma}[H\"{o}lder Inequality for $\tilde{Q}^{+}$]
\label{Lemma:HolderLikeInequalityForGain}Let $\frac{1}{p}+\frac{1}{q}=\frac{1%
}{r}$ where $p,q>\frac{3}{2}$ and let $b(\frac{\xi }{\left\vert \xi
\right\vert },\omega )\in C\left( \mathbb{S}^{2}\times \mathbb{S}^{2}\right) 
$, we then have the estimate that 
\begin{equation*}
\left\Vert \int_{\mathbb{S}^{2}}\tilde{f}(\xi ^{+})\tilde{g}(\xi ^{-})b(%
\frac{\xi }{\left\vert \xi \right\vert },\omega )d\omega \right\Vert
_{L_{\xi }^{r}}\lesssim \left\Vert \tilde{f}\right\Vert _{L_{\xi
}^{p}}\left\Vert \tilde{g}\right\Vert _{L_{\xi }^{q}}
\end{equation*}
\end{lemma}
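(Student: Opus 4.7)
The plan is to reduce, by duality and a change of variables, to a generalized H\"older inequality on the 5-dimensional manifold of orthogonal pairs in $\mathbb{R}^{3}\times\mathbb{R}^{3}$. This follows the approach of \cite{AC10}, which establishes an analogous H\"older-type inequality for the Boltzmann gain operator. Since $b\in C(\mathbb{S}^{2}\times\mathbb{S}^{2})$ is bounded, it can be absorbed into the implicit constant.

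By $L_{\xi}^{r}$--$L_{\xi}^{r'}$ duality, the inequality is equivalent to the trilinear bound
\begin{equation*}
\Big|\int_{\mathbb{R}^{3}}\tilde{h}(\xi)\int_{\mathbb{S}^{2}}\tilde{f}(\xi^{+})\tilde{g}(\xi^{-})\,d\omega\,d\xi\Big|\lesssim \|\tilde{f}\|_{L^{p}}\|\tilde{g}\|_{L^{q}}\|\tilde{h}\|_{L^{r'}},\qquad \tfrac{1}{p}+\tfrac{1}{q}+\tfrac{1}{r'}=1.
\end{equation*}
For each $\xi\neq 0$, the substitution $\omega\mapsto u:=\xi^{+}=\tfrac{1}{2}(\xi+|\xi|\omega)$ parametrizes the sphere of radius $|\xi|/2$ centered at $\xi/2$, with Jacobian $|\xi|^{2}/4$; thus $d\omega=4|\xi|^{-2}\,dS(u)$. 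Setting $u':=\xi^{-}=\xi-u$, one has $u\perp u'$ and $\xi=u+u'$, so the trilinear form becomes an integral over the manifold $\mathcal{M}:=\{(u,u')\in\mathbb{R}^{6}:u\perp u'\}$ weighted by $4|u+u'|^{-2}$.

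Next I apply generalized H\"older on $\mathcal{M}$ with exponents $p,q,r'$, distributing the Jacobian weight $|u+u'|^{-2}$ symmetrically between the $\tilde{f}(u)$- and $\tilde{g}(u')$-factors. With this symmetric split, the remaining spherical and radial integrals reduce to one-dimensional computations that converge exactly when $p,q>\tfrac{3}{2}$. For $r<1$ (which can occur when $p,q$ are close to $\tfrac{3}{2}$) standard duality is unavailable; one then proceeds instead by bilinear real interpolation from the endpoint estimates available at $r\geq 1$.

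The hard part is the weight redistribution. A naive application of H\"older in $\omega$ alone produces the non-integrable singularity $(u\cdot\omega)^{-2}$ at the boundary of the image half-space $\{u\cdot\omega>0\}$, whose $\omega$-integral over $\mathbb{S}^{2}$ diverges. The symmetric split between $\tilde{f}$ and $\tilde{g}$, combined with a Fubini-type reorganization on $\mathcal{M}$, trades this boundary singularity for the sharp scaling condition $p,q>\tfrac{3}{2}$, matching the threshold identified in \cite{AC10}.
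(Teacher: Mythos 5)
The paper does not actually prove this lemma from first principles: it invokes \cite{AC10}, Theorem~1, whose constant in dimension three is the angular integral $\int_{\mathbb{S}^{2}}b\,\bigl(\tfrac{1-\hat{\xi}\cdot\omega}{2}\bigr)^{-3/(2p)}\bigl(\tfrac{1+\hat{\xi}\cdot\omega}{2}\bigr)^{-3/(2q)}\,d\omega$, finite precisely when $p,q>\tfrac{3}{2}$, and notes that the proof only uses boundedness of $b$. You are instead attempting to reprove that result, which is legitimate, but your sketch stops exactly where the content of the lemma lives. The step ``apply generalized H\"older on $\mathcal{M}$ with the Jacobian weight distributed symmetrically; the remaining integrals converge exactly when $p,q>\tfrac{3}{2}$'' is an assertion, not an argument: you never write down the induced measure on $\mathcal{M}$, the weights, or the one-dimensional integrals whose convergence is claimed, and your own closing paragraph concedes that the naive H\"older in $\omega$ diverges and that ``the hard part is the weight redistribution.'' That hard part \emph{is} the proof. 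Moreover, a \emph{symmetric} split of $|u+u'|^{-2}$ between the $\tilde{f}$- and $\tilde{g}$-factors cannot in general be correct: in the sharp constant the two grazing singularities carry the exponents $3/(2p)$ and $3/(2q)$ separately, so for $p\neq q$ the weight must be distributed asymmetrically, in a $p,q$-dependent way, to reach the full range.

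Two further points. First, the condition $p,q>\tfrac{3}{2}$ is sharp: testing on $\tilde{f}=\boldsymbol{1}_{|u|\leq\varepsilon}$, $\tilde{g}=\boldsymbol{1}_{|u'|\leq 1}$ gives $\tilde{Q}^{+}(\tilde{f},\tilde{g})(\xi)\sim\min(1,\varepsilon^{2}/|\xi|^{2})$ for $|\xi|\lesssim 1$ (the $\omega$-cap where $|\xi^{+}|\leq\varepsilon$ has measure $\sim\varepsilon^{2}/|\xi|^{2}$, a two-dimensional count, not three), hence $\|\tilde{Q}^{+}\|_{L^{r}}\gtrsim\varepsilon^{2}$ against $\|\tilde{f}\|_{L^{p}}\sim\varepsilon^{3/p}$, forcing $p\geq\tfrac{3}{2}$. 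Any acceptable proof must therefore exhibit the exact power counting in the regions $|\xi^{\pm}|\ll|\xi^{\mp}|$ with no slack; a sketch that does not display it cannot be checked. Second, the duality reduction is only available for $r\geq 1$, and your fallback for $r<1$ (``bilinear real interpolation from the endpoint estimates available at $r\geq 1$'') names neither the endpoints nor the interpolation parameters; since every application in the paper has $r\geq 1$, the cleaner fix is simply to restrict to that case and say so.
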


\begin{proof}
This is actually a small modification of a special case of \cite[Theorem 1]%
{AC10}. Their proof actually works for all uniformly bounded $b$ with little
modification. We omit the details here.
\end{proof}

Denote the Littlewood-Paley projector of $x/\xi $ at frequency $N/M$ by $%
P_{N}^{x}/P_{M}^{\xi }$. We will use Lemma \ref%
{Lemma:HolderLikeInequalityForGain} in the following format.

\begin{lemma}
\label{Lemma:FreqHolderForGain} \ Let $\frac{1}{p}+\frac{1}{q}=\frac{1}{r}$
where $p,q>\frac{3}{2}$, we then have 
\begin{eqnarray*}
\left\Vert P_{N}^{x}P_{M}^{\xi }\tilde{Q}^{+}\left(
P_{N_{1}}^{x}P_{M_{1}}^{\xi }\tilde{f},P_{N_{2}}^{x}P_{M_{2}}^{\xi }\tilde{g}%
\right) \right\Vert _{L_{\xi }^{r}} &\lesssim &\min (1,\frac{\max
(N_{1},N_{2})}{N})\min (1,\frac{\max (M_{1},M_{2})}{M}) \\
&&\times \left\Vert P_{N_{1}}^{x}P_{M_{1}}^{\xi }\tilde{f}\right\Vert
_{L_{\xi }^{p}}\left\Vert P_{N_{2}}^{x}P_{M_{2}}^{\xi }\tilde{g}\right\Vert
_{L_{\xi }^{q}}
\end{eqnarray*}
\end{lemma}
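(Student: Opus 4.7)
The plan is to combine the Hölder inequality of Lemma \ref{Lemma:HolderLikeInequalityForGain} (applied with the trivial symbol $b\equiv 1$) with a pair of output-frequency support observations that force the Littlewood–Paley truncations $P_N^x$ and $P_M^\xi$ to vanish unless $N\lesssim\max(N_1,N_2)$ and $M\lesssim\max(M_1,M_2)$, respectively. Since $P_N^xP_M^\xi$ is uniformly bounded (in the relevant sense) on $L^r_\xi$, and $\tilde Q^+$ acts pointwise in $x$, Lemma \ref{Lemma:HolderLikeInequalityForGain} applied at each fixed $x$ immediately yields the ``no-gain'' bound
\[
\|P_N^x P_M^\xi \tilde Q^+(P_{N_1}^x P_{M_1}^\xi \tilde f,\,P_{N_2}^x P_{M_2}^\xi \tilde g)\|_{L^r_\xi}
\lesssim \|P_{N_1}^x P_{M_1}^\xi \tilde f\|_{L^p_\xi}\,\|P_{N_2}^x P_{M_2}^\xi \tilde g\|_{L^q_\xi},
\]
which already proves the claim whenever $N\lesssim\max(N_1,N_2)$ and $M\lesssim\max(M_1,M_2)$, because in that regime both $\min(1,\cdot)$ factors are bounded below by a universal constant.

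For the $x$-frequency support observation: the Bobylev form \eqref{eqn:bobylev for gain} shows that $\tilde Q^+(\tilde f,\tilde g)$ is literally a pointwise product in $x$, so its $x$-Fourier support is contained in the Minkowski sum of the inputs' $x$-Fourier supports. With the inputs localized at $|\eta_i|\sim N_i$, this gives $|\eta|\lesssim N_1+N_2\lesssim\max(N_1,N_2)$, and hence $P_N^x\tilde Q^+\equiv 0$ once $N$ is a large enough dyadic multiple of $\max(N_1,N_2)$.

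For the $\xi$-frequency support observation — which I expect to be the only nontrivial step — I would pass through the Bobylev identity in the opposite direction: $\tilde Q^+(\tilde f,\tilde g)$ is the $v$-Fourier transform of the physical-side gain $Q^+(f,g)$, where $f$ and $g$ are $v$-supported at $|v|\sim M_1$ and $|v|\sim M_2$ respectively (since the $\xi$-frequency localization $P_{M_i}^\xi$ corresponds, under $\mathcal{F}_\xi$, to a $v$-support restriction on the pre-image). The kinetic-energy conservation $|v^{\ast}|^2+|u^{\ast}|^2=|v|^2+|u|^2$ of a single binary collision then forces $|v|\leq(|v^{\ast}|^2+|u^{\ast}|^2)^{1/2}\lesssim\max(M_1,M_2)$ wherever $f(v^{\ast})g(u^{\ast})\neq 0$. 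Consequently $Q^+(f,g)$ is $v$-supported at scale $\lesssim\max(M_1,M_2)$, which is precisely the statement that $\tilde Q^+(\tilde f,\tilde g)$ is $\xi$-frequency localized at $\lesssim\max(M_1,M_2)$, so $P_M^\xi\tilde Q^+\equiv 0$ once $M$ is a large enough dyadic multiple of $\max(M_1,M_2)$.

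Putting the three pieces together, outside the regime $N\lesssim\max(N_1,N_2)$ and $M\lesssim\max(M_1,M_2)$ the left-hand side is identically zero (so the claimed bound holds trivially), while inside that regime the pointwise Hölder bound delivers the required inequality, with implicit constants absorbing the uniform lower bounds on the two $\min$-factors. The delicate point is the $\xi$-frequency gain: it relies on reading the $\xi$-Fourier side projector $P_{M_i}^\xi$ as a $v$-support restriction on the physical density and then invoking the conservation-of-energy structure intrinsic to the Boltzmann collision geometry; the $x$-frequency gain and the Hölder step are essentially immediate.
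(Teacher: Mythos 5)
Your proposal is correct, but it reaches the two frequency-gain factors by a different mechanism than the paper. The paper's proof obtains $\min(1,\max(M_1,M_2)/M)$ quantitatively: it writes $\|P_M^\xi h\|_{L^r_\xi}\sim M^{-1}\|P_M^\xi\nabla_\xi h\|_{L^r_\xi}$, distributes $\nabla_\xi$ over $\tilde f(\xi^+)\tilde g(\xi^-)$ using $|\partial_{\xi_j}\xi^\pm|\leq 1$, applies Lemma \ref{Lemma:HolderLikeInequalityForGain} with the symbol $b=\partial_{\xi_j}\xi^\pm$ (this is exactly why the lemma is stated for general bounded $b$, not just $b\equiv 1$), and then uses Bernstein on the inputs to produce $\max(M_1,M_2)$; the $x$-gain is handled the same way, since $\tilde Q^+$ is a pointwise product in $x$. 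You instead show that the left-hand side vanishes identically once $M\gg\max(M_1,M_2)$ or $N\gg\max(N_1,N_2)$ — via the Minkowski-sum support property in $\eta$ and, on the $\xi$ side, by passing back through Bobylev to the physical gain term and invoking $|v|^2\leq|v^\ast|^2+|u^\ast|^2\lesssim\max(M_1,M_2)^2$ on the support of the integrand. Both routes are valid, and since the lemma only claims $\min(1,\cdot)$ factors, your vanishing statement (which is strictly stronger where it applies) combined with the no-gain H\"older bound does deliver the full estimate. The trade-off is that your argument leans on the sharp annular localization of the Littlewood--Paley inputs and the exact energy-conservation structure of the collision, whereas the paper's derivative-transfer/Bernstein argument is more robust (it would survive non-compactly-supported multipliers and treats $x$ and $\xi$ by one uniform mechanism, which is presumably why the authors phrase the result as an interpolation of four inequalities rather than as a support statement).
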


\begin{proof}
The estimate can be obtained by interpolating the four inequalities: 
\begin{equation*}
\left\Vert P_{N}^{x}P_{M}^{\xi }\tilde{Q}^{+}\left(
P_{N_{1}}^{x}P_{M_{1}}^{\xi }\tilde{f},P_{N_{2}}^{x}P_{M_{2}}^{\xi }\tilde{g}%
\right) \right\Vert _{L_{\xi }^{r}}\lesssim \left\Vert
P_{N_{1}}^{x}P_{M_{1}}^{\xi }\tilde{f}\right\Vert _{L_{\xi }^{p}}\left\Vert
P_{N_{2}}^{x}P_{M_{2}}^{\xi }\tilde{g}\right\Vert _{L_{\xi }^{q}}
\end{equation*}%
\begin{equation*}
\left\Vert P_{N}^{x}P_{M}^{\xi }\tilde{Q}^{+}\left(
P_{N_{1}}^{x}P_{M_{1}}^{\xi }\tilde{f},P_{N_{2}}^{x}P_{M_{2}}^{\xi }\tilde{g}%
\right) \right\Vert _{L_{\xi }^{r}}\lesssim \frac{\max (M_{1},M_{2})}{M}%
\left\Vert P_{N_{1}}^{x}P_{M_{1}}^{\xi }\tilde{f}\right\Vert _{L_{\xi
}^{p}}\left\Vert P_{N_{2}}^{x}P_{M_{2}}^{\xi }\tilde{g}\right\Vert _{L_{\xi
}^{q}}
\end{equation*}%
\begin{equation*}
\left\Vert P_{N}^{x}P_{M}^{\xi }\tilde{Q}^{+}\left(
P_{N_{1}}^{x}P_{M_{1}}^{\xi }\tilde{f},P_{N_{2}}^{x}P_{M_{2}}^{\xi }\tilde{g}%
\right) \right\Vert _{L_{\xi }^{r}}\lesssim \frac{\max (N_{1},N_{2})}{N}%
\left\Vert P_{N_{1}}^{x}P_{M_{1}}^{\xi }\tilde{f}\right\Vert _{L_{\xi
}^{p}}\left\Vert P_{N_{2}}^{x}P_{M_{2}}^{\xi }\tilde{g}\right\Vert _{L_{\xi
}^{q}}
\end{equation*}%
\begin{eqnarray*}
\left\Vert P_{N}^{x}P_{M}^{\xi }\tilde{Q}^{+}\left(
P_{N_{1}}^{x}P_{M_{1}}^{\xi }\tilde{f},P_{N_{2}}^{x}P_{M_{2}}^{\xi }\tilde{g}%
\right) \right\Vert _{L_{\xi }^{r}} &\lesssim &\frac{\max (N_{1},N_{2})}{N}%
\frac{\max (M_{1},M_{2})}{M} \\
&&\times \left\Vert P_{N_{1}}^{x}P_{M_{1}}^{\xi }\tilde{f}\right\Vert
_{L_{\xi }^{p}}\left\Vert P_{N_{2}}^{x}P_{M_{2}}^{\xi }\tilde{g}\right\Vert
_{L_{\xi }^{q}}
\end{eqnarray*}

The main concern is definitely the $\xi $-part as $\tilde{Q}^{+}$ commutes
with $P_{N}^{x}$ and $\nabla _{x},$ it suffices to prove 
\begin{equation}
\left\Vert P_{M}^{\xi }\tilde{Q}^{+}\left( P_{M_{1}}^{\xi }\tilde{f}%
,P_{M_{2}}^{\xi }\tilde{g}\right) \right\Vert _{L_{\xi }^{r}}\lesssim
\left\Vert P_{M_{1}}^{\xi }\tilde{f}\right\Vert _{L_{\xi }^{p}}\left\Vert
P_{M_{2}}^{\xi }\tilde{g}\right\Vert _{L_{\xi }^{q}},
\label{eqn:gain term freq 1}
\end{equation}

and 
\begin{equation}
\left\Vert P_{M}^{\xi }\tilde{Q}^{+}\left( P_{M_{1}}^{\xi }\tilde{f}%
,P_{M_{2}}^{\xi }\tilde{g}\right) \right\Vert _{L_{\xi }^{r}}\lesssim \frac{%
\max (M_{1},M_{2})}{M}\left\Vert P_{M_{1}}^{\xi }\tilde{f}\right\Vert
_{L_{\xi }^{p}}\left\Vert P_{M_{2}}^{\xi }\tilde{g}\right\Vert _{L_{\xi
}^{q}}.  \label{eqn:gain term freq 2}
\end{equation}%
While (\ref{eqn:gain term freq 1}) is directly from Lemma \ref%
{Lemma:HolderLikeInequalityForGain}, we prove only (\ref{eqn:gain term freq
2}). Notice that 
\begin{equation*}
\partial _{\xi _{j}}\left( \tilde{f}(\xi ^{+})\tilde{g}(\xi ^{-})\right)
=\left( \partial _{\xi _{j}}\tilde{f}\right) (\xi ^{+})\tilde{g}(\xi
^{-})\partial _{\xi _{j}}\xi ^{+}+\tilde{f}(\xi ^{+})\left( \partial _{\xi
_{j}}\tilde{g}\right) (\xi ^{-})\partial _{\xi _{j}}\xi ^{-}
\end{equation*}%
and $\left\vert \partial _{\xi _{j}}\xi ^{\pm }\right\vert \leqslant 1$, we
can thus apply Lemma \ref{Lemma:HolderLikeInequalityForGain} with $%
b=\partial _{\xi _{j}}\xi ^{\pm }$. That is, 
\begin{eqnarray*}
&&\left\Vert P_{M}^{\xi }\tilde{Q}^{+}\left( P_{M_{1}}^{\xi }\tilde{f}%
,P_{M_{2}}^{\xi }\tilde{g}\right) \right\Vert _{L_{\xi }^{r}} \\
&=&M^{-1}\left\Vert P_{M}^{\xi }\nabla _{\xi }\tilde{Q}^{+}\left(
P_{M_{1}}^{\xi }\tilde{f},P_{M_{2}}^{\xi }\tilde{g}\right) \right\Vert
_{L_{\xi }^{r}} \\
&\lesssim &M^{-1}\left( \left\Vert P_{M_{1}}^{\xi }\nabla _{\xi }\tilde{f}%
\right\Vert _{L_{\xi }^{p}}\left\Vert P_{M_{2}}^{\xi }\tilde{g}\right\Vert
_{L_{\xi }^{q}}+\left\Vert P_{M_{1}}^{\xi }\tilde{f}\right\Vert _{L_{\xi
}^{p}}\left\Vert P_{M_{2}}^{\xi }\nabla _{\xi }\tilde{g}\right\Vert _{L_{\xi
}^{q}}\right)
\end{eqnarray*}%
which, by Bernstein, is 
\begin{equation*}
\left\Vert P_{M}^{\xi }\tilde{Q}^{+}\left( P_{M_{1}}^{\xi }\tilde{f}%
,P_{M_{2}}^{\xi }\tilde{g}\right) \right\Vert _{L_{\xi }^{r}}\lesssim \frac{%
\max (M_{1},M_{2})}{M}\left\Vert P_{M_{1}}^{\xi }\tilde{f}\right\Vert
_{L_{\xi }^{p}}\left\Vert P_{M_{2}}^{\xi }\tilde{g}\right\Vert _{L_{\xi
}^{q}}
\end{equation*}%
as claimed in (\ref{eqn:gain term freq 2}).
\end{proof}

\subsection{Proof of Theorem \protect\ref{T:gain}}

It suffice to prove 
\begin{equation*}
\Vert \left\langle \nabla _{x}\right\rangle ^{s}\left\langle \nabla _{\xi
}\right\rangle ^{s}\tilde{Q}^{+}(\tilde{f},\tilde{g})\Vert
_{L_{t}^{1}L_{x\xi }^{2}}\lesssim \Vert \tilde{f}\Vert _{X_{s,\frac{1}{2}%
+}}\Vert \tilde{g}\Vert _{X_{s,\frac{1}{2}+}}\text{.}
\end{equation*}%
We start by decomposing $\tilde{Q}^{+}(f,g)$ into Littlewood-Paley pieces 
\begin{eqnarray*}
&&\Vert \left\langle \nabla _{x}\right\rangle ^{s}\left\langle \nabla _{\xi
}\right\rangle ^{s}\tilde{Q}^{+}(\tilde{f},\tilde{g})\Vert
_{L_{t}^{1}L_{x\xi }^{2}} \\
&\leqslant &\sum_{\substack{ M,M_{1},M_{2}  \\ N,N_{1},N_{2}}}%
M^{s}N^{s}\Vert P_{N}^{x}P_{M}^{\xi }\tilde{Q}^{+}\left(
P_{N_{1}}^{x}P_{M_{1}}^{\xi }\tilde{f},P_{N_{2}}^{x}P_{M_{2}}^{\xi }\tilde{g}%
\right) \Vert _{L_{t}^{1}L_{x\xi }^{2}} \\
&=&A+B+C+D
\end{eqnarray*}%
where we have seperated the sum into 4 cases in which case A is $%
M_{1}\geqslant M_{2}$, $N_{1}\geqslant N_{2}$, case $B$ is $M_{1}\leqslant
M_{2}$, $N_{1}\geqslant N_{2}$, case $C$ is $M_{1}\leqslant M_{2}$, $%
N_{1}\leqslant N_{2}$, and case $D$ is $M_{1}\geqslant M_{2}$, $%
N_{1}\leqslant N_{2}$. We handle only cases A and B as the other two cases
follow similarly.

\subsubsection{Case A: $M_{1}\geqslant M_{2}$, $N_{1}\geqslant N_{2}$}

\begin{equation*}
A\lesssim \sum_{\substack{ _{\substack{ M,M_{1},M_{2}  \\ N,N_{1},N_{2}}} 
\\ M_{1}\geqslant M_{2},N_{1}\geqslant N_{2}}}\frac{M^{s}N^{s}}{%
M_{1}^{s}N_{1}^{s}}\Vert P_{N}^{x}P_{M}^{\xi }\tilde{Q}^{+}\left(
P_{N_{1}}^{x}P_{M_{1}}^{\xi }\left\langle \nabla _{x}\right\rangle
^{s}\left\langle \nabla _{\xi }\right\rangle ^{s}\tilde{f}%
,P_{N_{2}}^{x}P_{M_{2}}^{\xi }\tilde{g}\right) \Vert _{L_{t}^{1}L_{x\xi
}^{2}}
\end{equation*}%
Use Lemma \ref{Lemma:FreqHolderForGain} and H\"{o}lder, 
\begin{eqnarray*}
A &\lesssim &\sum_{\substack{ _{\substack{ M,M_{1},M_{2}  \\ N,N_{1},N_{2}}} 
\\ M_{1}\geqslant M_{2},N_{1}\geqslant N_{2}}}\frac{M^{s}N^{s}}{%
M_{1}^{s}N_{1}^{s}}\min (1,\frac{N_{1}}{N})\min (1,\frac{M_{1}}{M}) \\
&&\times \Vert P_{N_{1}}^{x}P_{M_{1}}^{\xi }\left\langle \nabla
_{x}\right\rangle ^{s}\left\langle \nabla _{\xi }\right\rangle ^{s}\tilde{f}%
\Vert _{L_{t}^{2}L_{x\xi }^{3}}\Vert P_{N_{2}}^{x}P_{M_{2}}^{\xi }\tilde{g}%
\Vert _{L_{t}^{2}L_{x\xi }^{6}}.
\end{eqnarray*}%
By Sobolev and Bernstein, 
\begin{eqnarray*}
A &\lesssim &\sum_{\substack{ _{\substack{ M,M_{1},M_{2}  \\ N,N_{1},N_{2}}} 
\\ M_{1}\geqslant M_{2},N_{1}\geqslant N_{2}}}\frac{M^{s}N^{s}}{%
M_{1}^{s}N_{1}^{s}}\min (1,\frac{N_{1}}{N})\min (1,\frac{M_{1}}{M}) \\
&&\times \Vert P_{N_{1}}^{x}P_{M_{1}}^{\xi }\left\langle \nabla
_{x}\right\rangle ^{s}\left\langle \nabla _{\xi }\right\rangle ^{s}\tilde{f}%
\Vert _{L_{t}^{2}L_{x\xi }^{3}}\Vert P_{N_{2}}^{x}P_{M_{2}}^{\xi
}\left\langle \nabla _{x}\right\rangle ^{\frac{1}{2}}\left\langle \nabla
_{\xi }\right\rangle ^{\frac{1}{2}}\tilde{g}\Vert _{L_{t}^{2}L_{x\xi }^{3}}
\\
&\lesssim &\sum_{\substack{ _{\substack{ M,M_{1},M_{2}  \\ N,N_{1},N_{2}}} 
\\ M_{1}\geqslant M_{2},N_{1}\geqslant N_{2}}}\frac{M^{s}N^{s}}{%
M_{1}^{s}N_{1}^{s}}\min (1,\frac{N_{1}}{N})\min (1,\frac{M_{1}}{M})\frac{1}{%
N_{2}^{s-\frac{1}{2}}}\frac{1}{M_{2}^{s-\frac{1}{2}}} \\
&&\times \Vert P_{N_{1}}^{x}P_{M_{1}}^{\xi }\left\langle \nabla
_{x}\right\rangle ^{s}\left\langle \nabla _{\xi }\right\rangle ^{s}\tilde{f}%
\Vert _{L_{t}^{2}L_{x\xi }^{3}}\Vert P_{N_{2}}^{x}P_{M_{2}}^{\xi
}\left\langle \nabla _{x}\right\rangle ^{s}\left\langle \nabla _{\xi
}\right\rangle ^{s}\tilde{g}\Vert _{L_{t}^{2}L_{x\xi }^{3}}
\end{eqnarray*}%
By Strichartz, 
\begin{eqnarray*}
A &\lesssim &\sum_{\substack{ _{\substack{ M,M_{1},M_{2}  \\ N,N_{1},N_{2}}} 
\\ M_{1}\geqslant M_{2},N_{1}\geqslant N_{2}}}\frac{M^{s}N^{s}}{%
M_{1}^{s}N_{1}^{s}}\min (1,\frac{N_{1}}{N})\min (1,\frac{M_{1}}{M})\frac{1}{%
N_{2}^{s-\frac{1}{2}}}\frac{1}{M_{2}^{s-\frac{1}{2}}} \\
&&\times \Vert P_{N_{1}}^{x}P_{M_{1}}^{\xi }\tilde{f}\Vert _{X_{s,\frac{1}{2}%
+}}\Vert P_{N_{2}}^{x}P_{M_{2}}^{\xi }\tilde{g}\Vert _{X_{s,\frac{1}{2}+}}%
\text{.}
\end{eqnarray*}%
Cauchy-Schwarz in $N_{2}$,$M_{2}$ to carry out the $N_{2}$,$M_{2}$ sum, we
have 
\begin{equation*}
A\lesssim \Vert \tilde{g}\Vert _{X_{s,\frac{1}{2}+}}\sum_{_{\substack{ %
M,M_{1}  \\ N,N_{1}}}}\frac{M^{s}N^{s}}{M_{1}^{s}N_{1}^{s}}\min (1,\frac{%
N_{1}}{N})\min (1,\frac{M_{1}}{M})\Vert P_{N_{1}}^{x}P_{M_{1}}^{\xi }f\Vert
_{X_{s,\frac{1}{2}+}}
\end{equation*}%
Separate the sum into four cases $M_{1}\leqslant M,M_{1}\geqslant
M,N_{1}\leqslant N$, and $N_{1}\geqslant N$, we are done by Schur's test or
Cauchy-Schwarz.

\subsubsection{Case B: $M_{1}\leqslant M_{2}$, $N_{1}\geqslant N_{2}$}

\begin{equation*}
B\lesssim \sum_{\substack{ _{\substack{ M,M_{1},M_{2}  \\ N,N_{1},N_{2}}} 
\\ M_{1}\leqslant M_{2},N_{1}\geqslant N_{2}}}\frac{M^{s}N^{s}}{%
M_{2}^{s}N_{1}^{s}}\Vert P_{N}^{x}P_{M}^{\xi }\tilde{Q}^{+}\left(
P_{N_{1}}^{x}P_{M_{1}}^{\xi }\left\langle \nabla _{x}\right\rangle ^{s}%
\tilde{f},P_{N_{2}}^{x}P_{M_{2}}^{\xi }\left\langle \nabla _{\xi
}\right\rangle ^{s}\tilde{g}\right) \Vert _{L_{t}^{1}L_{x\xi }^{2}}
\end{equation*}%
Use Lemma \ref{Lemma:FreqHolderForGain} and H\"{o}lder, 
\begin{eqnarray*}
B &\lesssim &\sum_{\substack{ _{\substack{ M,M_{1},M_{2}  \\ N,N_{1},N_{2}}} 
\\ M_{1}\leqslant M_{2},N_{1}\geqslant N_{2}}}\frac{M^{s}N^{s}}{%
M_{2}^{s}N_{1}^{s}}\min (1,\frac{N_{1}}{N})\min (1,\frac{M_{2}}{M}) \\
&&\times \Vert P_{N_{1}}^{x}P_{M_{1}}^{\xi }\left\langle \nabla
_{x}\right\rangle ^{s}\tilde{f}\Vert _{L_{t}^{2}L_{x}^{3}L_{\xi }^{6}}\Vert
P_{N_{2}}^{x}P_{M_{2}}^{\xi }\left\langle \nabla _{\xi }\right\rangle ^{s}%
\tilde{g}\Vert _{L_{t}^{2}L_{x}^{6}L_{\xi }^{3}}.
\end{eqnarray*}%
By Sobolev and Bernstein, 
\begin{eqnarray*}
B &\lesssim &\sum_{\substack{ _{\substack{ M,M_{1},M_{2}  \\ N,N_{1},N_{2}}} 
\\ M_{1}\leqslant M_{2},N_{1}\geqslant N_{2}}}\frac{M^{s}N^{s}}{%
M_{2}^{s}N_{1}^{s}}\min (1,\frac{N_{1}}{N})\min (1,\frac{M_{2}}{M}) \\
&&\times \Vert P_{N_{1}}^{x}P_{M_{1}}^{\xi }\left\langle \nabla
_{x}\right\rangle ^{s}\left\langle \nabla _{\xi }\right\rangle ^{\frac{1}{2}}%
\tilde{f}\Vert _{L_{t}^{2}L_{x\xi }^{3}}\Vert P_{N_{2}}^{x}P_{M_{2}}^{\xi
}\left\langle \nabla _{x}\right\rangle ^{\frac{1}{2}}\left\langle \nabla
_{\xi }\right\rangle ^{s}\tilde{g}\Vert _{L_{t}^{2}L_{x\xi }^{3}} \\
&\lesssim &\sum_{\substack{ _{\substack{ M,M_{1},M_{2}  \\ N,N_{1},N_{2}}} 
\\ M_{1}\leqslant M_{2},N_{1}\geqslant N_{2}}}\frac{M^{s}N^{s}}{%
M_{2}^{s}N_{1}^{s}}\min (1,\frac{N_{1}}{N})\min (1,\frac{M_{2}}{M})\frac{1}{%
N_{2}^{s-\frac{1}{2}}}\frac{1}{M_{1}^{s-\frac{1}{2}}} \\
&&\times \Vert P_{N_{1}}^{x}P_{M_{1}}^{\xi }\left\langle \nabla
_{x}\right\rangle ^{s}\left\langle \nabla _{\xi }\right\rangle ^{s}\tilde{f}%
\Vert _{L_{t}^{2}L_{x\xi }^{3}}\Vert P_{N_{2}}^{x}P_{M_{2}}^{\xi
}\left\langle \nabla _{x}\right\rangle ^{s}\left\langle \nabla _{\xi
}\right\rangle ^{s}\tilde{g}\Vert _{L_{t}^{2}L_{x\xi }^{3}}
\end{eqnarray*}%
One can then proceed in the exact same manner as in Case A. We omit the
details.

\section{Well-posedness\label{S:well-posedness}}

For completeness, we provide a quick (and not so complete) treatment of the
well-posedness statement in Theorem \ref{T:well-ill-posedness} using (\ref%
{E:loss-2}) and (\ref{E:gain-1}). As mentioned before, this part is already
proved in \cite{CDP19a,CDP19b}. We prove only the existence, uniqueness, and
the Lipschitz continuity of the solution map here. The nonnegativity of $f$
follows from the persistance of regularity \cite{CDP19b}.

Write $R=2\left\Vert \tilde{f}_{0}\right\Vert _{H_{x\xi }^{s}}$ and let $%
\theta (t)$ be a smooth function such that $\theta (t)=1$ for all $%
\left\vert t\right\vert \leq 1$ and $\theta (t)=0$ for $\left\vert
t\right\vert \geq 2$. Put $T=\delta \langle R\rangle ^{-2}$, where $\delta
>0 $ is an absolute constant (selected depending upon constants appearing in
the estimates such as \eqref{E:gl} that we apply). The estimates %
\eqref{E:loss-2} and \eqref{E:gain-1} admit the following minor adjustments
that account for restriction in time: 
\begin{equation}
\Vert \theta (\frac{t}{T})\tilde{Q}(\tilde{f},\tilde{g})\Vert _{X_{s,-\frac{1%
}{2}+}}\lesssim T^{1/2}\Vert \tilde{f}\Vert _{X_{s,\frac{1}{2}+}}\Vert 
\tilde{g}\Vert _{X_{s,\frac{1}{2}+}}\text{ for }s>1.  \label{E:gl}
\end{equation}%
Denote $B=\left\{ \tilde{f}:\Vert \tilde{f}\Vert _{X_{s,\frac{1}{2}+}}\leq
R\right\} $ the ball of radius $R$ in $X_{s,\frac{1}{2}+}$ and introduce the
nonlinear mapping $\Lambda :B\rightarrow \mathcal{S}^{\prime }$ given by 
\begin{equation*}
\Lambda (\tilde{f})=\theta (t)e^{it\nabla _{x}\cdot \nabla _{\xi }}\tilde{f}%
_{0}+D(\tilde{f},\tilde{f})\,,\qquad D(\tilde{f},\tilde{f})=\theta
(t)\int_{0}^{t}e^{i(t-t^{\prime })\nabla _{x}\cdot \nabla _{\xi }}\theta
(t^{\prime }/T)\tilde{Q}(\tilde{f}(t^{\prime }),\tilde{f}(t^{\prime
}))\,dt^{\prime }.
\end{equation*}
A fixed point $\tilde{f}=\Lambda (\tilde{f})$ will satisfy (\ref{E:boltz2})/%
\eqref{E:boltz1} for $-T\leq t\leq T$. Using \eqref{E:gl} one can show that 
\begin{equation*}
\Lambda :B\rightarrow B\,,\qquad \Vert \Lambda (\tilde{f})-\Lambda (\tilde{g}%
)\Vert _{X_{s,\frac{1}{2}+}}\leq \tfrac{1}{2}\Vert \tilde{f}-\tilde{g}\Vert
_{X_{s,\frac{1}{2}+}}
\end{equation*}%
and thus $\Lambda $ is a contraction. From this it follows that there is a
fixed point $\tilde{f}$ in $X_{s,\frac{1}{2}+}$, and this fixed point is
unique in $X_{s,\frac{1}{2}+}$.

Lipschitz continuity of the data to solution map is also straightforward.
Given two initial conditions $\tilde{f}_{0}$ and $\tilde{g}_{0}$, let $%
R=2\max (\Vert \tilde{f}_{0}\Vert _{H^{s}},\Vert \tilde{g}_{0}\Vert
_{H^{s}}) $ and $\tilde{f}$, $\tilde{g}$ be the corresponding fixed points
of $\Lambda $ constructed above. Then 
\begin{equation*}
\tilde{f}-\tilde{g}=\theta (t)e^{it\nabla _{x}\cdot \nabla _{\xi }}(\tilde{f}%
_{0}-\tilde{g}_{0})+D(\tilde{f}-\tilde{g},\tilde{f}-\tilde{g})+D(\tilde{f}-%
\tilde{g},\tilde{g})+D(\tilde{g},\tilde{f}-\tilde{g})
\end{equation*}%
The embedding $X_{s,\frac{1}{2}+}\hookrightarrow C\left( [-T,T],H_{x\xi
}^{s}\right) $ and estimate \eqref{E:gl} applied to the above equation give
the Lipschitz continuity of the data-to-solution map on the time $[-T,T]$.

\section{Ill-posedness\label{S:ill-posedness}}

In this section, we prove the ill-posedness portion of Theorem \ref%
{T:well-ill-posedness}. We will take 
\begin{equation*}
0<N\ll \max (M_{1},M_{2})^{-1}\ll 1\,,\qquad M_{1}\geq 1\,,\qquad M_{2}\geq 1
\end{equation*}

\subsection{Imprecise motivating calculations\label{s:motivate}}

We start by taking the two linear solutions that generated the
\textquotedblleft bad\textquotedblright\ interaction in the loss term from 
\S \ref{S:loss}, now scaled so that $\Vert \tilde{\phi}\Vert _{H_{x\xi
}^{s}}\sim 1$ and $\Vert \tilde{\psi}\Vert _{H_{x\xi }^{s}}\sim 1$. The bad
solution will have $O(1)$ $H_{x\xi }^{s}$-norm initially but more obvious
ill-behavior in $H_{x\xi }^{s_{0}}$-norm with $s_{0}<s$. The functions are: 
\begin{equation*}
\hat{\phi}(\eta _{1},v)=\frac{1}{M_{1}^{\frac{3}{2}+s}N^{3/2}}\hat{\chi}(%
\frac{\eta _{1}}{M_{1}})\hat{\chi}(\frac{v}{N})
\end{equation*}%
\begin{equation*}
\hat{\psi}(\eta _{2},v_{2})=\frac{1}{M_{2}^{1+s}N_{2}^{1+s}}\sum_{j=1}^{J}%
\hat{\chi}\left( \frac{P_{e_{j}}^{\perp }\eta _{2}}{M_{2}}\right) \hat{\chi}%
(N_{2}P_{e_{j}}\eta _{2})\hat{\chi}(M_{2}P_{e_{j}}^{\perp }v_{2})\hat{\chi}(%
\frac{P_{e_{j}}v_{2}}{N_{2}})
\end{equation*}%
where $J\sim M_{2}^{2}N_{2}^{2}$. Let $\tilde{f}=e^{it\nabla _{x}\cdot
\nabla _{\xi }}\tilde{\phi}$ and $\tilde{g}=e^{it\nabla _{x}\cdot \nabla
_{\xi }}\tilde{\psi}$. To deduce the form of the $(x,v)$ wave packets, we
need to pass from $\eta $ frequency support to $x$ spatial support. This is
done using the scaling principles of the Fourier transform. Since we have
chosen $0<N\ll \max (M_{1},M_{2})^{-1}\ll 1$, the $x$-support of $\tilde{f}$
hardly moves on an $O(1)$ time scale: 
\begin{equation*}
f(x,v,t)\approx \frac{M_{1}^{\frac{3}{2}-s}}{N^{3/2}}\chi (M_{1}x)\hat{\chi}(%
\frac{v}{N})
\end{equation*}%
On the other hand, the $g$ wave packet tubes move, each in their velocity
confined direction parallel to $e_{j}$ with speed $\sim N_{2}$. On short
time scales, we regard them as stationary: 
\begin{equation*}
g(x,v_{2},t)\approx \frac{M_{2}^{1-s}}{N_{2}^{2+s}}\sum_{j=1}^{J}\chi
(M_{2}P_{e_{j}}^{\perp }x)\chi (\frac{P_{e_{j}}x}{N_{2}})\hat{\chi}%
(M_{2}P_{e_{j}}^{\perp }v_{2})\hat{\chi}(\frac{P_{e_{j}}v_{2}}{N_{2}})
\end{equation*}

For the loss term, we need 
\begin{equation*}
\int_{\mathbb{R}^{3}}g(x,v_{2},t)\,dv_{2}\approx \frac{1}{%
M_{2}^{1+s}N_{2}^{1+s}}\sum_{j=1}^{J}\chi (M_{2}P_{e_{j}}^{\perp }x)\chi (%
\frac{P_{e_{j}}x}{N_{2}})\approx M_{2}^{1-s}N_{2}^{1-s}\chi (M_{2}x)
\end{equation*}%
We get 
\begin{equation*}
Q^{-}(f,g)\approx \frac{M_{1}^{\frac{3}{2}-s}}{N^{3/2}}%
M_{2}^{1-s}N_{2}^{1-s}\chi (M_{2}x)\chi (M_{1}x)\hat{\chi}(\frac{v}{N})
\end{equation*}%
Formal Duhamel iteration of $f+g$ suggests a prototype approximate solution
of the form 
\begin{equation*}
f_{a,0}(x,v,t)\approx \begin{aligned}[t]
&\frac{M_1^{\frac32-s}}{N^{3/2}}\exp[ -M_2^{1-s} N_2^{1-s} \chi( M_2 x)\; t
\,] \, \chi(M_1 x) \, \hat \chi(\frac{v}{N}) \\ &+
\frac{M_2^{1-s}}{N_2^{2+s}} \sum_{j=1}^J \chi\left( M_2 P_{e_j}^\perp x
\right) \chi( \frac{P_{e_j} x}{N_2} ) \hat \chi(M_2 P_{e_j}^\perp v ) \hat
\chi( \frac{P_{e_j}v}{N_2} ) \end{aligned}
\end{equation*}%
The prototype approximate solution $f_{a,0}$ is just $f+g$ above with $f$
preceeded by an exponentially decaying factor in time (which is suggested by
formal Duhamel iteration). Notice that when $s<1$, the exponential term
decays substantially on the short time scale $\sim M_{2}^{s-1}N_{2}^{s-1}\ll
1$. Thus we seek to show that this is an approximate solution on a time
interval $|t|\leq M_{2}^{s-1}N_{2}^{1-s}\ln M_{2}$, which is long enough for
the size of the first term with exponential coefficient to change
substantially.

Let us take

\begin{equation*}
M=M_{1}=M_{2}\gg 1\,,\qquad N=M^{-1}\ll 1
\end{equation*}%
We calculate, for any $0<s\leq 1$, $s_{0}\geq 0$, 
\begin{equation}
\Vert f_{a,0}\Vert _{L_{v}^{2,s_{0}}H_{x}^{s_{0}}}\sim M^{s_{0}-s}\exp
[-(MN_{2})^{1-s}t]\langle (MN_{2})^{1-s}t\rangle ^{s_{0}}+(MN_{2})^{s_{0}-s}
\label{E:ni01}
\end{equation}%
In the second term involving the sum in $j$ over $J\sim (MN_{2})^{2}$ terms,
the velocity supports are almost disjoint and the square of the sum is
approximately the sum of the squares. Thus, if we take 
\begin{equation}
0<s<1\,,\qquad s_{0}=s-\frac{\ln \ln M}{\ln M}\,,\qquad T_{\ast }=-\delta
(MN_{2})^{s-1}\ln M\leq t\leq 0  \label{E:rTdef}
\end{equation}%
then at the ends of this interval: 
\begin{equation*}
\Vert f_{a,0}(0)\Vert _{L_{v}^{2,s_{0}}H_{x}^{s_{0}}}\leq \frac{1}{\ln M}\ll
1\,,\qquad \Vert f_{a,0}(T_{\ast })\Vert _{L_{v}^{2,s_{0}}H_{x}^{s_{0}}}\sim
M^{\delta }\gg 1
\end{equation*}%
Note that, as $M\nearrow \infty $, $s_{0}\nearrow s$, and this approximate
solution, in $H_{x\xi }^{s_{0}}$, starts very small in $H_{x\xi }^{s_{0}}$
at time $0$, and rapidly inflates at time $T_{\ast }<0$ to large size in $%
H_{x\xi }^{s_{0}}$ backwards in time. By considering the same approximate
solution starting at $T_{\ast }<0$ and evolving forward to time $0$, we have
an approximate solution that starts large and deflates to a small size in a
very short period of time.

\subsection{Precise reformulation\label{sec:precise forumulation of bad}}

The calculations in \S \ref{s:motivate} were imprecise (with $\approx $
symbols) although they served only to motivate the form of the actual
approximate solution $f_{a}$ we are going to use. From now on, the error
estimates will be made more precise. The following is a better way to write
the form of $f_{\text{a}}$ leading to a more accurate approximate solution.
Start with 
\begin{equation}
f_{\text{b}}(t,x,v)=\frac{M_{2}^{1-s}}{N_{2}^{2+s}}\sum_{j=1}^{J}\chi \left(
M_{2}P_{e_{j}}^{\perp }(x-vt)\right) \chi (\frac{P_{e_{j}}(x-vt)}{N_{2}}%
)\chi (M_{2}P_{e_{j}}^{\perp }v)\chi (10\frac{P_{e_{j}}(v-N_{2}e_{j})}{N_{2}}%
)  \label{def:f_b}
\end{equation}%
We note that $f_{\text{b}}$ is a linear solution: 
\begin{equation*}
\partial _{t}f_{\text{b}}+v\cdot \nabla _{x}f_{\text{b}}=0
\end{equation*}%
Another feature is that if $|i-j|\geq 4$, then the $i$th and $j$th terms
have disjoint $v$-support. Now set 
\begin{equation}
f_{\text{r}}(t,x,v)=\frac{M_{1}^{\frac{3}{2}-s}}{N^{3/2}}\exp \left[
-\int_{s=0}^{t}\int_{v}f_{\text{b}}(s,x,v)\,dv\,ds\right] \chi (M_{1}x)\chi (%
\frac{v}{N})\text{, } \quad N\leqslant M^{-1}  \label{def:f_r}
\end{equation}%
Note that $f_{\text{r}}$ is a solution to a drift-free linearized Boltzmann
equation: 
\begin{equation*}
\partial _{t}f_{\text{r}}(t,x,v)=-f_{\text{r}}(t,x,v)\int_{v}f_{\text{b}%
}(t,x,v)\,dv=-Q^{-}(f_{\text{r}},f_{\text{b}})
\end{equation*}%
The approximate solution of interest is then 
\begin{equation*}
f_{\text{a}}=f_{\text{r}}+f_{\text{b}}
\end{equation*}%
Let 
\begin{equation}
\begin{aligned} F_{\text{err}}& =\partial _{t}f_{\text{a}}+v\cdot \nabla
_{x}f_{\text{a}}+Q^{-}(f_{\text{a}},f_{\text{a}})-Q^{+}(f_{\text{a}},f_{%
\text{a}}) \\ & =v\cdot \nabla
_{x}\,f_{\text{r}}+Q^{-}(f_{\text{b}},f_{\text{r}})+Q^{-}(f_{\text{r}},f_{%
\text{r}})\\ & \qquad +Q^{-}(f_{\text{b}},f_{\text{b}})
-Q^{+}(f_{\text{a}},f_{\text{a}}) \end{aligned}  \label{E:Ferr02}
\end{equation}%
Note that this is missing the \textquotedblleft bad terms\textquotedblright\ 
$v\cdot \nabla _{x}f_{\text{b}}$ and $Q^{-}(f_{\text{r}},f_{\text{b}})$
since $f_{\text{a}}$ already incorporates the evolutionary impact of these
interactions. Indeed $Q^{-}(f_{\text{r}},f_{\text{b}})$ is the cause of the
backwards-in-time growth of $f_{\text{r}}$.

For convenience, let us take 
\begin{equation*}
M=M_1=M_2\gg 1 \,, \qquad N_2^{1-s} \geq M^\delta \,, \qquad N\leq M^{-1}
\end{equation*}

We will show in Prop. \ref{P:norm_deflation} that there exist $f_\text{c}$
(for ``correction'') so that $f_{\text{ex}}$ (for ``exact'') given by 
\begin{equation*}
f_{\text{ex}} = f_{\text{a}}+f_{\text{c}}
\end{equation*}
exactly solves 
\begin{equation*}
\partial_t f_{\text{ex}} + v\cdot \nabla_x f_{\text{ex}} = -Q^-(f_{\text{ex}%
},f_{\text{ex}})+Q^+(f_{\text{ex}},f_{\text{ex}})
\end{equation*}
where 
\begin{equation*}
T_*=-\delta (MN_{2})^{s-1}\ln M\leq t\leq 0
\end{equation*}
and 
\begin{equation}  \label{E:fc_bound}
\|f_{\text{c}}(t) \|_{L_v^{2,1}H_x^1} \lesssim M^{-\delta/4}
\end{equation}
where the exponential term comes from the iteration of the local theory
approximation argument below over time steps of length $\delta (MN)^{s-1}$.
The equation for $f_{\text{c}}$ is 
\begin{equation}
\begin{aligned} &\partial_t f_{\text{c}} + v\cdot \nabla_x f_{\text{c}} = G
\\ &G= \pm Q^\pm(f_{\text{c}},f_{ \text{a}}) \pm
Q^\pm(f_{\text{a}},f_{\text{c}}) \pm Q^\pm(f_{\text{c}},f_{\text{c}
})-F_{\text{err}} \end{aligned}  \label{E:fc_equation}
\end{equation}%
To prove Proposition \ref{P:norm_deflation}, we will do estimates in local
time steps of size $\delta (MN_{2})^{s-1}\ll 1$ by working with the norm: 
\begin{equation*}
\Vert f_{\text{c}}(t)\Vert _{Z}=M\Vert f_{\text{c}}(t)\Vert
_{L_{v}^{2,1}L_{x}^{2}}+\Vert \nabla _{x}f_{\text{c}}(t)\Vert
_{L_{v}^{2,1}L_{x}^{2}}+\Vert f_{\text{c}}(t)\Vert _{L_{v}^{1}L_{x}^{\infty
}}+M^{-1}\Vert \nabla _{x}f_{\text{c}}(t)\Vert _{L_{v}^{1}L_{x}^{\infty }}
\end{equation*}%
to complete a perturbation argument to prove that $f_{\text{c}}$ (and thus $%
f_{\text{ex}}$) exists. As we are trying to construct a fairly smooth
solution with specific properties (some of which cannot be efficiently
treated using Strichartz and, in any case, we do not have (\ref{E:loss-2})
for $s=1$) to the Boltzmann equation linearized near a special solution, we
use the $Z$-norm instead of only the $L_{v}^{2,1}H_{x}^{1}$ norm to better
serve our purpose here. Inside the $Z$-norm, the $L_{v}^{1}L_{x}^{\infty }$
norm is preserved by the linear flow and scales the same way as the norm $%
\dot{L}_{v}^{2,\frac{3}{2}}\dot{H}_{x}^{3/2}$, and thus lies at regularity
scale \emph{above} $L_{v}^{2,1}H_{x}^{1}$. Nevertheless, due to the specific
structure of $f_{\text{a}}(t)$, we have (see Lemma \ref{L:fa_size} below) 
\begin{equation*}
\Vert f_{\text{a}}(t)\Vert _{Z}\sim \Vert f_{\text{a}}(t)\Vert
_{L_{v}^{2,1}H_{x}^{1}}
\end{equation*}%
and thus it is effective for our purposes. The extra terms $M\Vert f_{\text{c%
}}(t)\Vert _{L_{v}^{2,1}L_{x}^{2}}$ and $M^{-1}\Vert \nabla _{x}f_{\text{c}%
}(t)\Vert _{L_{v}^{1}L_{x}^{\infty }}$ are added to the definition of the
norm to account for the case in which $\nabla _{x}$ lands on the
\textquotedblleft wrong\textquotedblright\ term in the $Z$ bilinear estimate
(see Lemma \ref{L:Z_bilinear} below).

In the next three subsections, Lemmas \ref{L:fa_size}, \ref{L:Ferr_bound},
and \ref{L:Z_bilinear} are proved and will provide the key technical tools
to carry out the proof of Proposition \ref{P:norm_deflation}.

\subsection{Size of the approximate solution $f_{\text{a}}\label%
{sec:property of f_a}$}

Before proceeding, we will give a useful pointwise bound on $f_{\text{b}}$.
For $-\frac{1}{4}\leq t\leq \frac{1}{4}$, given the constraints on $v$, 
\begin{align*}
\hspace{0.3in}& \hspace{-0.3in}M^{1-s}N_{2}^{-2-s}\sum_{j=1}^{J}\chi
(10MP_{e_{j}}^{\perp }x)\chi (\frac{10P_{e_{j}}x}{N_{2}})\chi
(MP_{e_{j}}^{\perp }v)\chi (\frac{10P_{e_{j}}(v-N_{2}e_{j})}{N_{2}}) \\
& \leq f_{\text{b}}(t,x,v)\leq M^{1-s}N_{2}^{-2-s}\sum_{j=1}^{J}\chi (\frac{%
MP_{e_{j}}^{\perp }x}{10})\chi (\frac{P_{e_{j}}x}{10N_{2}})\chi
(MP_{e_{j}}^{\perp }v)\chi (\frac{P_{e_{j}}(v-N_{2}e_{j})}{10N_{2}})
\end{align*}%
Now upon integrating in $v$, we obtain 
\begin{align*}
\hspace{0.3in}& \hspace{-0.3in}M^{-1-s}N_{2}^{-1-s}\sum_{j=1}^{J}\chi
(10MP_{e_{j}}^{\perp }x)\chi (\frac{10P_{e_{j}}x}{N_{2}}) \\
& \leq \int f_{\text{b}}(t,x,v)\,dv\leq
M^{-1-s}N_{2}^{-1-s}\sum_{j=1}^{J}\chi (\frac{MP_{e_{j}}^{\perp }x}{10})\chi
(\frac{P_{e_{j}}x}{10N_{2}})
\end{align*}%
We can pointwise estimate this sum by%
\begin{equation}
\int f_{\text{b}}(t,x,v)\,dv\sim M^{-1-s}N_{2}^{-1-s}\left( \frac{N_{2}}{%
|x|+M^{-1}}\right) ^{2}\chi (\frac{x}{N_{2}}).
\label{e:estimate for integrate f_b-1}
\end{equation}%
To see (\ref{e:estimate for integrate f_b-1}), we split the size of $%
\left\vert x\right\vert $ into 3 cases. For $\left\vert x\right\vert
\leqslant M^{-1}$, all the tubes are overlapping and hence, the $J$ sum has $%
(MN_{2})^{2}$ summands. For $\left\vert x\right\vert \sim N_{2}$, the tubes
are effectively disjoint (up to tubes of distance comparable to $M^{-1}$),
that is, the $J$ sum has $O(1)$ summands. For the intermediate case, the
overlap count would be the number of points with adjacent distance $\frac{%
\left\vert x\right\vert }{MN_{2}}$ in a $M^{-1}\times M^{-1}$ box, and is
thus $M^{-2}/(\frac{\left\vert x\right\vert }{MN_{2}})^{2}$. Hence, we
conclude (\ref{e:estimate for integrate f_b-1}).

In particular, we have 
\begin{equation}
\int f_{\text{b}}(t,x,v)\,dv\sim M^{1-s}N_{2}^{1-s}\text{ for }\left\vert
x\right\vert \lesssim M^{-1}.  \label{e:estimate for integrate f_b-2}
\end{equation}%
The inequalities (\ref{e:estimate for integrate f_b-1}) and (\ref{e:estimate
for integrate f_b-2}) remain true with the extra factor $M^k$ if $\nabla_x^k$
is applied, for $k\geq 0$ and $k\in \mathbb{Z}$. Let 
\begin{equation*}
\beta(t,x) = \int_0^t \int f_{\text{b}}(t_0,x,v)\,dv \, dt_0
\end{equation*}
Then \eqref{e:estimate for integrate f_b-1} yields the pointwise upper
bounds 
\begin{equation*}
|\nabla_x^k \beta(t,x)| \lesssim |t| M^{-1+k-s}N_{2}^{-1-s}\left( \frac{N_{2}%
}{|x|+M^{-1}}\right) ^{2}\chi (\frac{x}{N_{2}})
\end{equation*}
which implies 
\begin{equation}  \label{E:beta-bound}
\|\beta(t,x)\|_{L_x^\infty} \lesssim |t| M^{1+k-s}N_2^{1-s}
\end{equation}

\begin{lemma}[bounds on $f_{\text{a}}$]
\label{L:fa_size} For all $q \geq 0$, we have for $t\leq 0$, 
\begin{equation}  \label{E:fa_size_q}
\| f_{\text{a}}(t) \|_{L_v^{2,q} H_x^q} \sim M^{q-s} \exp[|t| (MN_2)^{1-s}]
\langle |t| (MN_2)^{1-s} \rangle + (MN_2)^{q-s}
\end{equation}
In addition, 
\begin{equation*}
\| f_{\text{a}}(t) \|_Z \lesssim M^{1-s}\exp[|t| (MN_2)^{1-s}] \langle |t|
(MN_2)^{1-s} \rangle + (MN_2)^{1-s}
\end{equation*}
Since we further assume that $N_2^{1-s}\geq M^\delta$, this bound simplifies
to 
\begin{equation*}
\| f_{\text{a}}(t) \|_Z \lesssim (MN_2)^{1-s}
\end{equation*}
\end{lemma}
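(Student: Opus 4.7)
The plan is to decompose $f_{\text{a}}=f_{\text{r}}+f_{\text{b}}$ and estimate each piece separately, then combine. Because the velocity supports are essentially disjoint ($|v|\lesssim N\leq M^{-1}\ll 1$ for $f_{\text{r}}$, $|v|\sim N_{2}\gg 1$ for $f_{\text{b}}$), both the upper and lower bounds on $\|f_{\text{a}}\|_{L_{v}^{2,q}H_{x}^{q}}$ follow from the triangle inequality together with the $v$-orthogonality of the two pieces; the same reasoning applies to the four ingredients of the $Z$-norm.

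For $f_{\text{b}}$, the key combinatorial observations are that (i) tubes indexed by $e_{i},e_{j}$ with $|i-j|\geq 4$ have disjoint $v$-support by construction, and (ii) for each fixed $v$ only $O(1)$ tubes are active. Hence the $L_{v}^{2}$ square norm equals, up to absolute constants, the sum over $j$ of single-tube contributions. A single tube has amplitude $M^{1-s}N_{2}^{-2-s}$, is localized in a box of shape $M^{-1}\times M^{-1}\times N_{2}$ in both $x$ and $v$, and has $|v|\sim N_{2}$ (so the weight $\langle v\rangle^{q}\sim N_{2}^{q}$). Each derivative $\nabla_{x}$ in a narrow direction of a tube costs $M$, while in the long direction it costs only $N_{2}^{-1}\ll M$, so the top-order contribution to $\|\nabla_{x}^{q}(\text{tube})\|_{L^{2}}$ dominates. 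Multiplying the amplitude, weights, and square roots of the two volumes, then summing over the $J\sim (MN_{2})^{2}$ tubes, yields $\|f_{\text{b}}\|_{L_{v}^{2,q}H_{x}^{q}}\sim (MN_{2})^{q-s}$. The four $Z$-norm components are handled identically: the $L_{v}^{1}L_{x}^{\infty}$ components use that integrating the amplitude over the union of tube velocity supports (total volume $\sim N_{2}^{3}$ with $O(1)$ overlap) gives $(MN_{2})^{1-s}$, matching the $L_{v}^{2,1}L_{x}^{2}$ components because the factors $M$ and $M^{-1}$ in the definition of $Z$ are precisely chosen to compensate for derivative/volume trade-offs in the tube geometry.

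For $f_{\text{r}}$, I separate variables:
\[
\|f_{\text{r}}\|_{L_{v}^{2,q}H_{x}^{q}}=\tfrac{M^{3/2-s}}{N^{3/2}}\,\|e^{-\beta(t,\cdot)}\chi(Mx)\|_{H_{x}^{q}}\,\|\langle v\rangle^{q}\chi(v/N)\|_{L_{v}^{2}}.
\]
The velocity factor is $\sim N^{3/2}$, since $\langle v\rangle\sim 1$ on the support of $\chi(v/N)$. For the spatial factor, $t\leq 0$ and $f_{\text{b}}\geq 0$ give $-\beta\geq 0$ with $-\beta\sim\lambda:=|t|(MN_{2})^{1-s}$ on the bulk $|x|\lesssim M^{-1}$, so $e^{-\beta}\sim e^{\lambda}$ there. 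Applying Fa\`a di Bruno with the pointwise derivative bound $|\nabla_{x}^{k}\beta|\lesssim M^{k}\lambda$ (a consequence of \eqref{E:beta-bound}), each Bell polynomial monomial $\prod_{j}(\nabla^{j}\beta)^{a_{j}}$ with $\sum ja_{j}=k$ is controlled by $M^{k}\lambda^{\sum a_{j}}e^{|\beta|}$. A careful accounting of these monomials on the $\sim M^{-3}$-volume support of $\chi(Mx)$ yields $\|e^{-\beta}\chi(Mx)\|_{H_{x}^{q}}\sim M^{q-3/2}e^{\lambda}\langle\lambda\rangle$, which combines with the velocity factor to give $\|f_{\text{r}}\|_{L_{v}^{2,q}H_{x}^{q}}\sim M^{q-s}e^{\lambda}\langle\lambda\rangle$. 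The same arithmetic applies to each of the four $Z$-norm ingredients of $f_{\text{r}}$, which therefore all come out at order $M^{1-s}e^{\lambda}\langle\lambda\rangle$. Finally, under the hypothesis $N_{2}^{1-s}\geq M^{\delta}$ and the time restriction $|t|\leq |T_{\ast}|=\delta(MN_{2})^{s-1}\ln M^{-1}$, one has $\lambda\leq\delta|\ln M|$ and $e^{\lambda}\langle\lambda\rangle\lesssim M^{-c\delta}|\ln M|$, so the $(MN_{2})^{1-s}$ tube term dominates the $f_{\text{r}}$ contribution once $\delta$ is chosen sufficiently small.

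The main technical obstacle is the sharp Fa\`a di Bruno counting for $\nabla_{x}^{q}e^{-\beta}$: the naive pointwise bound gives $M^{q}\langle\lambda\rangle^{q}e^{|\beta|}$, which would produce $\langle\lambda\rangle^{q}$ instead of the claimed single $\langle\lambda\rangle$. To obtain the linear power one has to exploit that $\beta$ is essentially constant ($\approx -\lambda$) on most of the support of $\chi(Mx)$, so the high-order Bell monomials $(\nabla\beta)^{q}$ are supported in the thin transition shell where $e^{-\beta}$ is strictly smaller than $e^{\lambda}$; this compensation keeps the final $\langle\lambda\rangle$ power linear in the top-order $L^{2}$ norm, and is also what aligns the four $Z$-norm ingredients of $f_{\text{r}}$ at the same order $M^{1-s}e^{\lambda}\langle\lambda\rangle$.
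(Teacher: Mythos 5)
Your decomposition $f_{\text{a}}=f_{\text{r}}+f_{\text{b}}$, the use of disjoint velocity supports to split the norm, the single-tube computation and $J\sim(MN_2)^2$ counting for $f_{\text{b}}$, and the separation of variables plus the pointwise bounds on $\beta$ for $f_{\text{r}}$ all match the paper's route: the paper's own proof is a two-line reduction to the computations carried out inside the proof of Lemma \ref{L:Ferr_bound}, which are precisely the estimates you reproduce. The $f_{\text{b}}$ part of your argument is correct and complete, and the four $Z$-norm components check out against the values listed in the paper.

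The one genuine gap is the step you yourself flag as the main obstacle: the assertion that a ``careful accounting'' of the Fa\`a di Bruno monomials yields the single power $\langle\lambda\rangle$ (with $\lambda=|t|(MN_2)^{1-s}$) for \emph{all} $q\ge 0$. The compensation mechanism you describe controls only the monomials $(\nabla\beta)^{a}$, which indeed vanish at the critical points of $\beta$; it does not control monomials such as $(\lambda\nabla^{2}\psi)^{q/2}$ (writing $-\beta=\lambda\psi(Mx)$ with $\psi\sim 1$ and $|\nabla^k\psi|\lesssim 1$), which are of size $\lambda^{q/2}$ exactly at the maximum of $\psi$, where $e^{-\beta}\sim e^{\lambda}$ is \emph{not} damped. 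So for $q>2$ your argument does not produce the linear power, and the two-sided claim with $\langle\lambda\rangle^{1}$ is itself doubtful in that range. Fortunately none of this is needed for the paper: the lemma is only invoked with $q=s_{0}<1$ and with first derivatives inside the $Z$-norm, where the trivial bound $\langle\lambda\rangle^{q}\le\langle\lambda\rangle$ (or a single Leibniz term at $q=1$) already gives the stated upper bound, and the lower bound follows from the zero-derivative term together with $\|f_{\text{a}}\|\gtrsim\|f_{\text{b}}\|$. Two smaller slips: the inequality $e^{\lambda}\langle\lambda\rangle\lesssim M^{-c\delta}|\ln M|$ is backwards, since $e^{\lambda}$ can be as large as $M^{\delta}$ on $[T_*,0]$ --- what you need is that $M^{1-s}e^{\lambda}\langle\lambda\rangle$ is dominated (up to a logarithm) by $(MN_2)^{1-s}$ using $N_2^{1-s}\ge M^{\delta}$; and at $q=0$ the factor $\langle\lambda\rangle$ cannot appear in a two-sided estimate for $\|f_{\text{r}}\|_{L^{2}_{v}L^{2}_{x}}$, which is a defect of the lemma's statement rather than of your proof.
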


Before proceeding to the proof, note that if we take $q=s_{0}$ in %
\eqref{E:fa_size_q}, where 
\begin{equation}
s_{0}=s-\frac{\ln \ln M}{\ln M}  \label{E:r_def}
\end{equation}%
then 
\begin{equation}
\Vert f_{\text{a}}(t)\Vert _{L_{v}^{2,s_{0}}H_{x}^{s_{0}}}\sim \frac{1}{\ln M%
}\exp [|t|(MN_{2})^{1-s}] \langle |t| (MN_2)^{1-s} \rangle
\label{E:fa_size_r}
\end{equation}%
In particular, 
\begin{equation*}
\Vert f_{\text{a}}(0)\Vert _{L_{v}^{2,s_{0}}H_{x}^{s_{0}}}\sim \frac{1}{\ln M%
}\ll 1\,,\qquad \Vert f_{\text{a}}(T_{\ast })\Vert
_{L_{v}^{2,s_{0}}H_{x}^{s_{0}}}\sim \delta M^\delta \gg 1
\end{equation*}

Note that, as $M\nearrow \infty $, $s_0\nearrow s$, and this approximate
solution, in $L_v^{2,s_0}H_{x}^{s_0}$, starts very small in $%
L_v^{2,s_0}H_{x}^{s_0}$ at time $0$, and rapidly inflates at time $T_*<0$ to
large size in $L_v^{2,s_0}H_{x}^{s_0}$ backwards in time. By considering the
same approximate solution starting at $T_*<0$ and evolving forward to time $%
0 $, we have an approximate solution that starts large and deflates to a
small size in a very short period of time.

\begin{proof}
As $f_{b}$ and $f_{r}$'s velocity supports are disjoint, $\Vert f_{\text{a}%
}(t)\Vert _{X}\sim \Vert f_{\text{r}}(t)\Vert _{X}+\Vert f_{\text{b}%
}(t)\Vert _{X}$ where $X$ stands for the norms being considered. As we would
obtain the $\Vert f_{\text{r}}(t)\Vert _{X}$ and $\Vert f_{\text{b}}(t)\Vert
_{X}$ in Lemma \ref{L:fa_size} via suitable scaling of the computation in
the middle of the proof of Lemma \ref{L:Ferr_bound}, we are not repeating
the calculation here.
\end{proof}

\begin{remark}
\label{Remark: finite 2nd moment}One can compute directly from (\ref{def:f_b}%
) and (\ref{def:f_r}) that%
\begin{equation*}
\int \int (1+\left\vert x\right\vert ^{2}+\left\vert v\right\vert
^{2})f_{a}(t,x,v)dxdv\lesssim M^{-1-s}N_{2}^{4}+M^{-\frac{3}{2}-s+\delta }N^{%
\frac{3}{2}}
\end{equation*}%
As we can take $N_{2}\sim M^{\delta }$ and $N\leqslant M^{-1}$, the 2nd
moments of $f_{a}$ is uniformly bounded in $M$. That is, an uniform bound of
the 2nd moment will not turn the ill-posedness into well-posedness.
\end{remark}

\subsection{Bounding of forcing terms $F_{\text{err}}$}

\begin{lemma}[bounds on $F_{\text{err}}$]
\label{L:Ferr_bound} For $s>1/2$ and for $T_* \leq t \leq 0$, 
\begin{equation}
\Vert \int_{\tau }^{t}e^{-(t-t_{0})v\cdot \nabla _{x}}F_{\text{err}%
}(t_{0})\,dt_{0}\Vert _{Z}\lesssim M^{-\delta }  \label{E:Ferr01}
\end{equation}
\end{lemma}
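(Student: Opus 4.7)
The plan is to decompose $F_{\text{err}}$ following \eqref{E:Ferr02} into its five terms
\[
v\cdot \nabla _x f_{\text{r}},\quad Q^-(f_{\text{b}},f_{\text{r}}),\quad Q^-(f_{\text{r}},f_{\text{r}}),\quad Q^-(f_{\text{b}},f_{\text{b}}),\quad -Q^+(f_{\text{a}},f_{\text{a}}),
\]
estimate the $Z$-norm of each pointwise in $t_0$, and integrate in time. Because $e^{-(t-t_0)v\cdot\nabla_x}$ is simply free transport in $x$, which commutes with $\nabla_x$ and acts as $x$-translation in every seminorm defining $Z$, Minkowski reduces the task to bounding $\int_\tau^t \|F_{\text{err}}(t_0)\|_Z\,dt_0$ on an interval of length at most $\lesssim (MN_2)^{s-1}\ln M$, and then absorbing logarithmic losses via the hypothesis $N_2^{1-s}\ge M^\delta$.

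First I would dispatch the three easy pieces. For $v\cdot\nabla_x f_{\text{r}}$, use that $|v|\le N\le M^{-1}$ on $\supp f_{\text{r}}$, while $\nabla_x f_{\text{r}}$ splits into a piece from $\nabla\chi(M_1 x)$ (factor $M$) and a piece from $\nabla e^{-\beta}$ bounded by the $k=1$ version of \eqref{E:beta-bound}, i.e.\ $|\nabla_x\beta|\lesssim |t| M^{2-s}N_2^{1-s}$; the $|v|\le M^{-1}$ gain beats both contributions on the relevant time scale. For $Q^-(f_{\text{b}},f_{\text{r}})$ and $Q^-(f_{\text{r}},f_{\text{r}})$ use the loss identity $Q^-(\phi,f_{\text{r}})=4\pi\phi\int f_{\text{r}}\,du$ together with $\int f_{\text{r}}\,du\lesssim M^{3/2-s}N^{3/2}\le M^{-s}$ coming from $N\le M^{-1}$; the explicit $M^{-s}$ factor, with $s>1/2$, is the decisive smallness, and the remaining $\phi$ is measured in $Z$ via Lemma \ref{L:fa_size}.

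For the gain term I would expand $Q^+(f_{\text{a}},f_{\text{a}}) = Q^+(f_{\text{r}},f_{\text{r}}) + Q^+(f_{\text{r}},f_{\text{b}}) + Q^+(f_{\text{b}},f_{\text{r}}) + Q^+(f_{\text{b}},f_{\text{b}})$. The three terms with an $f_{\text{r}}$ factor each gain $M^{-s}$ via a $Z$-norm companion of Lemma \ref{Lemma:HolderLikeInequalityForGain}, since $f_{\text{r}}$'s $L^p$-norms carry explicit $N^{3/2}$ and $M^{3/2-s}$ factors. This leaves the delicate pair $Q^-(f_{\text{b}},f_{\text{b}}) - Q^+(f_{\text{b}},f_{\text{b}}) = -Q(f_{\text{b}},f_{\text{b}})$, to be controlled through tube geometry: for $Q^-(f_{\text{b}},f_{\text{b}})(v,x) = f_{\text{b}}(v,x)\int f_{\text{b}}(u,x)\,du$ use the pointwise tube-count \eqref{e:estimate for integrate f_b-1} and the disjointness of distinct tubes' velocity supports to bound each $Z$-seminorm of the product; for $Q^+(f_{\text{b}},f_{\text{b}})$, pass to the $(x,\xi)$-side via the Bobylev form \eqref{eqn:bobylev for gain} and exploit that for two tubes with distinct axes $e_i,e_j$, the post-collision velocities $\xi^\pm = \tfrac12(\xi\pm|\xi|\omega)$ generically miss the outgoing tube supports, so the output concentrates in $v$ much less than $f_{\text{b}}$ itself.

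The main obstacle is the bound on $Q^+(f_{\text{b}},f_{\text{b}})$: it requires a precise geometric count on $\mathbb{S}^2$ of admissible post-collision velocities, in the spirit of the sharpness argument of \S \ref{sec:sharpness}, and it uses the two hypotheses $s>1/2$ (yielding the algebraic gain $(MN_2)^{1/2-s}\ll 1$) and $N_2^{1-s}\ge M^\delta$ in an essential way to close the final bound at $M^{-\delta}$. This is the step flagged in the \S \ref{S:ill-posedness} roadmap as the paradigmatic example of $X_{s,b}$-style geometric fine-interaction analysis transplanted to the Boltzmann setting.
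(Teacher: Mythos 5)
Your overall plan---bound $\Vert F_{\text{err}}(t_0)\Vert_Z$ pointwise in $t_0$ and then integrate over the interval of length $\sim (MN_2)^{s-1}\ln M$---is exactly what the paper does for every term \emph{except} the $L_v^1L_x^\infty$ component of $Q^{\pm}(f_{\text{b}},f_{\text{b}})$, and for that component the reduction via Minkowski fails quantitatively. Indeed, pointwise in time one has $\Vert Q^-(f_{\text{b}},f_{\text{b}})\Vert_{L_v^1L_x^\infty}\lesssim \Vert f_{\text{b}}\Vert_{L_v^1L_x^\infty}\,\Vert\int f_{\text{b}}\,dv\Vert_{L_x^\infty}\sim (MN_2)^{2-2s}$, and this is essentially sharp (all $J\sim(MN_2)^2$ tubes overlap near $x=0$); multiplying by $|T_*|\sim\delta(MN_2)^{s-1}\ln M$ yields $(MN_2)^{1-s}\ln M\gg 1$ for $s<1$, not $M^{-\delta}$. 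The paper's proof explicitly flags this as the one place where one must carry out the $t_0$ integration \emph{first}: writing $D^-=\int_\tau^t e^{-(t-t_0)v\cdot\nabla_x}Q^-(f_{\text{b}},f_{\text{b}})(t_0)\,dt_0$ and using that $f_{\text{b}}$ is a free transport solution, the $dt_0$ integral of $\bigl(|x-v(t-t_0)|+M^{-1}\bigr)^{-2}\chi\bigl(\frac{x-v(t-t_0)}{N_2}\bigr)$ produces factors $N_2^{-1}$ (from $|v|\sim N_2$) and $M$ (from the $1$D integral), i.e.\ each moving tube sweeps past a fixed $x$ in time $\sim(MN_2)^{-1}\ll|T_*|$. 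This yields $\Vert D^-\Vert_{L_v^1L_x^\infty}\lesssim(MN_2)^{1-2s}$, which is where $s>\tfrac12$ actually enters. For $D^+$ the same mechanism requires substituting $t_0\mapsto(P_\omega^{\Vert}(v^*-u^*)\cdot\omega)t_0$ inside the Duhamel integral and then performing a dyadic angular decomposition on $\mathbb{S}^2$ of both the angle between $\omega$ and $v^*-u^*$ and the angle between $\omega$ and $e_k$, with a separate count for nearly parallel $e_j,e_k$; your heuristic that ``post-collision velocities generically miss the outgoing tube supports'' does not substitute for this, because the needed gain lives in the time integration, not in a fixed-time support/transversality count.

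Your treatment of the remaining terms is consistent with the paper: $v\cdot\nabla_x f_{\text{r}}$ via $|v|\le N\le M^{-1}$ and the $\beta$-bounds, the terms containing an $f_{\text{r}}$ factor via the explicit $M^{-s}$ smallness of $\Vert f_{\text{r}}\Vert_{L_v^1L_x^\infty}$ together with the Alonso--Carneiro estimates (the paper's Lemma on $Z$-bilinear bounds), and the $L_v^{2,1}H_x^1$ component of $Q^{\pm}(f_{\text{b}},f_{\text{b}})$ via disjointness of $v$-supports and transversality of the $x$-tubes. Note also that the paper does not use any cancellation in $Q^--Q^+$ applied to $(f_{\text{b}},f_{\text{b}})$; each sign is estimated separately, so grouping them as $-Q(f_{\text{b}},f_{\text{b}})$ buys nothing.
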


\begin{proof}
We address all of the terms in \eqref{E:Ferr02} separately below. The terms $%
Q^{\pm }(f_{b},f_{b})$\emph{\ }need the most attention and require $s>\frac{1%
}{2}$, thus we put them last. For many terms, the estimate is achieved by
moving the $t_{0}$ integration to the outside: 
\begin{equation*}
\Vert \int_{\tau }^{t}e^{-(t-t_{0})v\cdot \nabla _{x}}F_{\text{err}%
}(t_{0})\,dt_{0}\Vert _{Z}\lesssim (MN_{2})^{s-1}\ln M\Vert F_{\text{err}%
}\Vert _{L_{t}^{\infty }Z}
\end{equation*}%
The only exception is the treatment of the bound on $L_{v}^{1}L_{x}^{\infty
} $ of $Q^{\pm }(f_{b},f_{b})$, where a substantial gain is captured by
carrying out the $t_{0}$ integration first.
\end{proof}

\subsubsection{Analysis of $v\cdot \protect\nabla _{x}f_{\text{r}}$}

Starting with 
\begin{equation*}
f_{\text{r}}(t,x,v)=\frac{M^{\frac{3}{2}-s}}{N^{3/2}}\exp \left[ -\beta (x,t)%
\right] \chi (M x)\chi (\frac{v}{N})
\end{equation*}%
and recalling the estimate \eqref{E:beta-bound} and that $|t|\leq \delta
(MN_2)^{s-1}\ln M$ 
\begin{align*}
\Vert v\cdot \nabla _{x}f_{\text{r}}\Vert _{L_{v}^{2,1}H_{x}^{1}}& \lesssim
NM^{\frac{3}{2}-s}\Big(M\Vert \exp [-\beta(t,x)](\nabla \chi )(M x)\Vert
_{L_{x}^{2}} \\
& \qquad \qquad \qquad +\Vert \nabla _{x}\beta(t,x)\exp [-\beta(t,x)]\chi (M
x)\Vert _{L_{x}^{2}}\Big) \\
& \lesssim NM^{\frac{3}{2}-s}\Big(M^{-1/2}\Vert \exp [-\beta (x,t)]\Vert
_{L_{x}^{\infty }} \\
& \qquad \qquad \qquad +M^{-3/2}\Vert \nabla _{x}\beta(t,x)\Vert
_{L_{x}^{\infty }}\Vert \exp [-\beta(t,x)]\Vert _{L_{x}^{\infty }}\Big) \\
& \lesssim NM^{1-s}\exp [\delta \ln M]\ln M=NM^{1+\delta -s}\langle \delta
\ln M \rangle \leq M^{\delta -s}\langle \delta \ln M \rangle
\end{align*}

When multiplied by $|T^*|$, this produces a bound $M^{2\delta -1} N_2^{s-1}$%
, which suffices provided $\delta < \frac13$. A similar bound is obtained
for $M^{-1}\Vert \nabla_x [ v\cdot \nabla _{x}f_{\text{r}}]\Vert
_{L_{v}^{2,1}H_{x}^{1}}$.

We also obtain the bound 
\begin{align*}
\hspace{0.3in}&\hspace{-0.3in} \Vert v\cdot \nabla _{x}f_{\text{r}}\Vert
_{L_{v}^{1}L_{x}^{\infty }}+M^{-1}\Vert \nabla _{x}[v\cdot \nabla _{x}f_{%
\text{r}}]\Vert _{L_{v}^{1}L_{x}^{\infty }} \\
&\lesssim M^{\frac52-s}N^{5/2} (1+ M^{-1}\|\nabla_x
\beta(t,x)\|_{L_x^\infty}) \|\exp[ - \beta(t,x)]\|_{L_x^\infty} \\
&\lesssim M^{-s} \langle \delta \ln M \rangle M^\delta
\end{align*}
Upon multiplying by $|T_*|$, we obtain the bound $M^{2\delta-1} N_2^{s-1}$,
which suffices provided $\delta<\frac13$.

\subsubsection{Analysis of $Q^{+}(f_{\text{r}},f_{\text{b}})$\label{SS:542}}

By \cite[Theorem 2]{AC10} with $\lambda =0$, $r=1$, $p=1$, $q=1$, 
\begin{equation*}
\Vert Q^{+}(f_{\text{r}},f_{\text{b}})\Vert _{L_{v}^{1}L_{x}^{\infty
}}\lesssim \Vert f_{\text{r}}\Vert _{L_{v}^{1}L_{x}^{\infty }}\Vert f_{\text{%
b}}\Vert _{L_{v}^{1}L_{x}^{\infty }}
\end{equation*}%
Since 
\begin{equation*}
\Vert f_{\text{r}}\Vert _{L_{v}^{1}L_{x}^{\infty }}\sim M^{\frac{3}{2}%
-s}N^{3/2}\exp [|t|(MN_{2})^{1-s}]\leq M^{\delta-s}\,,\qquad \Vert f_{\text{b%
}}\Vert _{L_{v}^{1}L_{x}^{\infty }}\lesssim (MN_{2})^{1-s}
\end{equation*}%
when $N\leq M^{-1}$, we obtain 
\begin{equation}
\Vert Q^{+}(f_{\text{r}},f_{\text{b}})\Vert _{L_{v}^{1}L_{x}^{\infty
}}\lesssim M^{\delta-s}(MN_2)^{s-1}  \label{E:Ferr03}
\end{equation}
Upon multiplying by $|T_*|$, this yields a bound $M^{\delta-s}\langle \delta
\ln M\rangle$, which suffices provided $s>\frac12$ and $\delta<\frac14$.

On the other hand, 
\begin{equation*}
\Vert \nabla _{x}Q^{+}(f_{\text{r}},f_{\text{b}})\Vert _{L_{x}^{2}}\leq
Q^{+}(\Vert \nabla _{x}f_{\text{r}}\Vert _{L_{x}^{2}},\Vert f_{\text{b}%
}\Vert _{L_{x}^{\infty }})+Q^{+}(\Vert f_{\text{r}}\Vert _{L_{x}^{\infty
}},\Vert \nabla _{x}f_{\text{b}}\Vert _{L_{x}^{2}})
\end{equation*}%
A weight of $|v|$ is transferred to $f_{\text{b}}(u^{\ast })$, since $f_{%
\text{r}}(v^{\ast })$ forces $|v^{\ast }|\lesssim N$ which in turn implies $%
|P_{\omega }^{\perp }v|\leq N\ll 1$ via 
\begin{equation*}
v^{\ast }=P_{\omega }^{\perp }v+P_{\omega }^{\Vert }u
\end{equation*}%
Thus if $|v|\gg N$ (in particular if $|v|\geq 1$), then $|P_{\omega }^{\Vert
}v|\sim |v|$. Since 
\begin{equation*}
u^{\ast }=P_{\omega }^{\perp }u+P_{\omega }^{\Vert }v
\end{equation*}%
it follows that $|u^{\ast }|\geq |P_{\omega }^{\Vert }v|\sim |v|$. Now by 
\cite[Theorem 2]{AC10}, with $\lambda =0$, $r=2$, $p=1$, $q=2,$ 
\begin{equation*}
\Vert \nabla _{x}Q^{+}(f_{\text{r}},f_{\text{b}})\Vert
_{L_{v}^{2,1}L_{x}^{2}}\lesssim \Vert \nabla _{x}f_{\text{r}}\Vert
_{L_{v}^{1}L_{x}^{2}}\Vert f_{\text{b}}\Vert _{L_{v}^{2,1}L_{x}^{\infty
}}+\Vert f_{\text{r}}\Vert _{L_{v}^{1}L_{x}^{\infty }}\Vert \nabla _{x}f_{%
\text{b}}\Vert _{L_{v}^{2,1}L_{x}^{2}}
\end{equation*}%
(As an expository note, such a bound is not possible for $Q^{-}$. In that
case, the $L_{v}^{1}$ norm must go on $f_{\text{b}}$.) 
\begin{equation*}
\Vert \nabla _{x}f_{\text{r}}\Vert _{L_{v}^{1}L_{x}^{2}}\sim
M^{1-s}N^{3/2}\exp [(MN_{2})^{1-s}|t|]\,,\qquad \Vert f_{\text{b}}\Vert
_{L_{v}^{2,1}L_{x}^{\infty }}\sim M^{1-s}N_{2}^{\frac{1}{2}-s}
\end{equation*}%
\begin{equation*}
\Vert f_{\text{r}}\Vert _{L_{v}^{1}L_{x}^{\infty }}\sim M^{\frac{3}{2}%
-s}N^{3/2}\exp [(MN_{2})^{1-s}|t|]\,,\qquad \Vert \nabla _{x}f_{\text{b}%
}\Vert _{L_{v}^{2,1}L_{x}^{2}}\sim (MN_{2})^{1-s}
\end{equation*}%
Thus, with $N\leq M^{-1}$, 
\begin{equation}
\Vert \nabla _{x}Q^{+}(f_{\text{r}},f_{\text{b}})\Vert
_{L_{v}^{2,1}L_{x}^{2}}\lesssim (MN_{2})^{1-s}M^{-s}\exp [(MN_{2})^{1-s}|t|]
\label{E:Ferr04}
\end{equation}
Upon multiplying by $|T_*|$, we obtain a bound of $M^{\delta -s}$, which
suffices provided $\delta<\frac14$ and $s>\frac12$.

Beside \eqref{E:Ferr03}, \eqref{E:Ferr04}, the other two norms $M\Vert
\bullet \Vert _{L_{v}^{2,1}L_{x}^{2}}$ and $M^{-1}\Vert \nabla _{x}\bullet
\Vert _{L_{v}^{1}L_{x}^{\infty }}$ comprising $Z$ are similarly bounded
since the $x$-frequency of both terms in $Q^{+}(f_{\text{r}},f_{\text{b}})$
is $\sim M$.

\subsubsection{Analysis of $Q^{\pm }(f_{b},f_{r})$ and $Q^{\pm
}(f_{r},f_{r}) $}

These terms will be treated simultaneously with the two estimates 
\begin{equation}
\Vert \nabla _{x}Q^{\pm }(f_{1},f_{2})\Vert _{L_{v}^{2,1}L_{x}^{2}}+M\Vert
Q^{\pm }(f_{1},f_{2})\Vert _{L_{v}^{2,1}L_{x}^{2}}\lesssim M\Vert f_{1}\Vert
_{L_{v}^{2,1}L_{x}^{2}}\Vert f_{2}\Vert _{L_{v}^{1}L_{x}^{\infty }}
\label{E:Ferr06}
\end{equation}%
\begin{equation}
\Vert Q^{\pm }(f_{1},f_{2})\Vert _{L_{v}^{1}L_{x}^{\infty }}+M^{-1}\Vert
\nabla _{x}Q^{\pm }(f_{1},f_{2})\Vert _{L_{v}^{1}L_{x}^{\infty }}\lesssim
\Vert f_{1}\Vert _{L_{v}^{1}L_{x}^{\infty }}\Vert f_{2}\Vert
_{L_{v}^{1}L_{x}^{\infty }}  \label{E:Ferr07}
\end{equation}%
for 
\begin{equation*}
(1,2)\in \{(b,r),(r,r)\}
\end{equation*}%
We have 
\begin{equation*}
\Vert f_{r}\Vert _{L_{v}^{2,1}L_{x}^{2}}\sim M^{-s}\exp [(MN_{2})^{1-s}|t|]
\end{equation*}%
\begin{equation*}
\Vert f_{r}\Vert _{L_{v}^{1}L_{x}^{\infty }}\sim M^{\frac{3}{2}%
-s}N^{3/2}\exp [(MN_{2})^{1-s}|t|]\leq M^{-s}\exp [(MN_{2})^{1-s}|t|]
\end{equation*}%
\begin{equation*}
\Vert f_{b}\Vert _{L_{v}^{2,1}L_{x}^{2}}\sim M^{-s}N_{2}^{1-s}
\end{equation*}%
\begin{equation*}
\Vert f_{b}\Vert _{L_{v}^{1}L_{x}^{\infty }}\sim (MN_{2})^{1-s}
\end{equation*}%
Substituting $(f_{1},f_{2})=(f_{b},f_{r})$ and $(f_{1},f_{2})=(f_{r},f_{r})$
into the two bounds \eqref{E:Ferr06}, \eqref{E:Ferr07}, we obtain 
\begin{equation*}
\Vert Q^{\pm }(f_{b},f_{r})\Vert _{Z}\lesssim (MN_{2})^{1-s}M^{-s}\exp
[(MN_{2})^{1-s}|t|]
\end{equation*}%
\begin{equation*}
\Vert Q^{\pm }(f_{r},f_{r})\Vert _{Z}\lesssim M^{1-2s}\exp
[(MN_{2})^{1-s}|t|]
\end{equation*}%
which, upon multiplying by the time factor $T_{\ast }$, yield bounds of $%
M^{\delta-s}$ and $M^{\delta-s}N_2^{s-1}$, respectively, which suffice
provided $s>\frac12$ and $\delta<\frac14$.

\subsubsection{Analysis of $Q^{\pm }(f_{b},f_{b})\label{sec:Q+-(f_b,f_b)}$}

It suffices to estimate $\left\Vert Q^{\pm }(f_{b},f_{b})\right\Vert
_{L_{v}^{2,1}L_{x}^{2}}$ and 
\begin{equation*}
\left\Vert \int_{\tau }^{t}e^{-(t-t_{0})v\cdot \nabla _{x}}Q^{\pm
}(f_{b},f_{b})dt_{0}\right\Vert _{L_{v}^{1}L_{x}^{\infty }}
\end{equation*}%
without the derivatives due to the specific and clear structure of $f_{b}$.
These estimates rely on the geometric gain of some integrals based on the
fine structure of the nonlinear interactions. We start with the $%
L_{v}^{2,1}L_{x}^{2}$ estimates on which the smallness comes from the $%
L_{x}^{2}$ integral. The gain inside the $L_{v}^{1}L_{x}^{\infty }$ estimate
comes from the time integration in the Duhamel integral and uses extensively
the $X_{s,b}$ techniques.

\paragraph{The $L_{v}^{2,1}H_x^1$ estimates}

For either the gain or loss term, the weight on $v$ and the $x$-derivative
produce factors of $N_{2}$ and $M$ respectively, so we can reduce to
estimating the $L_{v}^{2}L_{x}^{2}$ norm. The loss term, the factor $%
\int_{v}f_{\text{b}}(x,v,t)\,dv$ effectively restricts to $|x|\lesssim
M^{-1} $, as in \eqref{e:estimate for integrate f_b-2}, and this effectively
truncates the $x$-tubes in the other $f_{\text{b}}(x,v,t)$ factor. 
\begin{equation*}
Q^{-}(f_{\text{b}},f_{\text{b}})\sim M^{2-2s}N_{2}^{-1-2s}\chi
(Mx)\sum_{j}\chi (MP_{e_{j}}^{\perp }v)\chi
(10N_{2}^{-1}P_{e_{j}}(v-N_{2}e_{j}))
\end{equation*}%
Upon applying the $L_{v}^{2}$ norm, we use the disjointness of $v$-supports
between terms in the sum to obtain 
\begin{equation*}
\Vert Q^{-}(f_{\text{b}},f_{\text{b}})\Vert _{L_{v}^{2}L_{x}^{2}}\lesssim
M^{2-2s}N_{2}^{-1-2s}M^{-3/2}(MN_{2})M^{-1}N_{2}^{1/2}=M^{\frac{1}{2}%
-2s}N_{2}^{\frac{1}{2}-2s}
\end{equation*}%
Thus 
\begin{equation*}
\Vert Q^{-}(f_{\text{b}},f_{\text{b}})\Vert _{L_{v}^{2,1}H_{x}^{1}}+M\Vert
Q^{-}(f_{\text{b}},f_{\text{b}})\Vert _{L_{v}^{2,1}L_{x}^{2}}\lesssim M^{%
\frac{3}{2}-2s}N_{2}^{\frac{3}{2}-2s}
\end{equation*}%
Upon multiplying by the time factor $|T_*|$, this yields a bound of $%
M^{\frac12-s}N_2^{\frac12-s}$, which suffices for $s>\frac12$ and $%
\delta\leq 2(s-\frac12)$.

The same gain in the effective $x$-width is achieved in the gain term $%
Q^{+}(f_{b},f_{b})$, although it is a little more subtle. Recall (\ref%
{def:f_b}) and write 
\begin{equation}
f_{\text{b}}(t,x,v)=\frac{M^{1-s}}{N_{2}^{2+s}}\sum_{j=1}^{J}I_{j}(t,x,v),
\label{E:fbsplit}
\end{equation}%
then 
\begin{equation*}
Q^{+}(f_{b},f_{b})=M^{2-2s}N_{2}^{-4-2s}\sum_{j,k}Q^{+}(I_{j},I_{k})
\end{equation*}%
and therefore 
\begin{equation*}
\Vert Q^{+}(f_{b},f_{b})\Vert _{L_{x}^{2}}\lesssim
M^{2-2s}N_{2}^{-4-2s}\left(
\sum_{j_{1},j_{2},k_{1},k_{2}}%
\int_{x}Q^{+}(I_{j_{1}},I_{k_{1}})Q^{+}(I_{j_{2}},I_{k_{2}})\,dx\right)
^{1/2}
\end{equation*}%
The $x$ supports of $I_{j_{1}}$, $I_{k_{1}}$, $I_{j_{2}}$, $I_{k_{2}}$ are
parallel to $e_{j_{1}}$, $e_{k_{1}}$, $e_{j_{2}}$, $e_{k_{2}}$,
respectively. Typically, at least two of these directions are transverse%
\footnote{%
The rare nearly parallel cases can be handled by a dyadic angular
decomposition and corresponding reduction in the summation count, similar to
that given below in the treatment of the $L_v^1L_x^\infty$ estimate for the
gain term.}, which means that the two $x$-tubes intersect to a set with
diameter $\sim M^{-1}$. Thus, upon carrying out the $x$ integration, we
obtain a factor $M^{-3}$: 
\begin{equation*}
\Vert Q^{+}(f_{b},f_{b})\Vert _{L_{x}^{2}}\lesssim M^{\frac{1}{2}%
-2s}N_{2}^{-4-2s}\left( \sum_{j_{1},j_{2},k_{1},k_{2}}Q^{+}(\hat{I}_{j_{1}},%
\hat{I}_{k_{1}})Q^{+}(\hat{I}_{j_{2}},\hat{I}_{k_{2}})\right) ^{1/2}
\end{equation*}%
where $\hat{I}$ is only the $v$-part.

Thus 
\begin{equation*}
\Vert Q^{+}(f_{b},f_{b})\Vert _{L_{v}^{2}L_{x}^{2}}\lesssim M^{\frac{1}{2}%
-2s}N_{2}^{-4-2s}\Vert Q^{+}(\hat{I},\hat{I})\Vert _{L_{v}^{2}}
\end{equation*}%
where $\hat{I}$ is the $x$-independent function 
\begin{equation*}
\hat{I}(v)=\sum_{j=1}^{J}\chi (M_{2}P_{e_{j}}^{\perp }v)\chi (10\frac{%
P_{e_{j}}(v-N_{2}e_{j})}{N_{2}})\sim \boldsymbol{1}_{\frac{9}{10}N_{2}\leq
|v|\leq \frac{11}{10}N_{2}}(v)
\end{equation*}%
Thus, using \cite[Theorem 2]{AC10} with $\lambda =0$, $r=2$, $p=1$, $q=2$
like before, 
\begin{equation*}
\Vert Q^{+}(\hat{I},\hat{I})\Vert _{L_{v}^{2}}\lesssim N_{2}^{9/2},
\end{equation*}
from which it follows that 
\begin{equation*}
\Vert Q^{+}(f_{b},f_{b})\Vert _{L_{v}^{2}L_{x}^{2}}\lesssim M^{\frac{1}{2}%
-2s}N_{2}^{\frac{1}{2}-2s}
\end{equation*}%
as in the loss case.

\paragraph{The $L_{v}^{1}L_{x}^{\infty }$ estimates}

We 1st deal with $Q^{-}(f_{b},f_{b})$ as it is shorter and prepares for the
gain term which is more difficult. We estimate the Duhamel term as that is
how it is going to be used%
\begin{equation*}
D^{-}=\int_{\tau }^{t}e^{-(t-t_{0})v\cdot \nabla
_{x}}Q^{-}(f_{b},f_{b})(t_{0})dt_{0}
\end{equation*}%
Plugging in (\ref{def:f_b}) which is a linear solution and (\ref{e:estimate
for integrate f_b-1}), we have%
\begin{eqnarray*}
D^{-} &\sim &\frac{M^{1-s}}{N_{2}^{2+s}}\sum_{j=1}^{J}\chi \left(
MP_{e_{j}}^{\perp }(x-vt)\right) \chi (\frac{P_{e_{j}}(x-vt)}{N_{2}})\chi
(MP_{e_{j}}^{\perp }v)\chi (10\frac{P_{e_{j}}(v-N_{2}e_{j})}{N_{2}}) \\
&&\times \int_{\tau }^{t}\frac{M^{-1-s}}{N_{2}^{1+s}}\left( \frac{N_{2}}{%
|x-v(t-t_{0})|+M^{-1}}\right) ^{2}\chi (\frac{x-v(t-t_{0})}{N_{2}})dt_{0}
\end{eqnarray*}%
The point is we pick up a $\frac{1}{\left\vert v\right\vert }$ in the $%
dt_{0} $ integral%
\begin{equation*}
\int_{\tau }^{t}\left( \frac{1}{|x-v(t-t_{0})|+M^{-1}}\right) ^{2}\chi (%
\frac{x-v(t-t_{0})}{N_{2}})dt_{0}
\end{equation*}%
which is like $N_{2}^{-1}$ due to the cutoff $\chi (10\frac{%
P_{e_{j}}(v-N_{2}e_{j})}{N_{2}})$, and a $M$ from actually carrying out the
1D $dt_{0}$ integral (with some leftovers having no effects under $%
L_{x}^{\infty }$). So we have 
\begin{equation*}
\left\Vert D^{-}\right\Vert _{L_{v}^{1}L_{x}^{\infty }}\lesssim \frac{M^{1-s}%
}{N_{2}^{2+s}}\frac{M^{-1-s}}{N_{2}^{-1+s}}\left( MN_{2}^{-1}\right) \left(
M^{-2}N_{2}\right) \left( MN_{2}\right) ^{2}\lesssim M^{1-2s}N_{2}^{1-2s}
\end{equation*}%
where the factors $M^{-2}N_{2}$ and $\left( MN_{2}\right) ^{2}$ come from $%
L_{v}^{1}$ and the number of summands in $J$ respectively. It is small if $s>%
\frac{1}{2}.$

We now turn to $Q^{+}(f_{b},f_{b})$. Just like the loss term, we would like
to exploit the time integration in the Duhamel%
\begin{equation*}
D^{+}=\int_{\tau }^{t}e^{-(t-t_{0})v\cdot \nabla
_{x}}Q^{+}(f_{b},f_{b})(t_{0})dt_{0}.
\end{equation*}%
Use the short hand (\ref{E:fbsplit})

\begin{eqnarray*}
D^{+} &=&\frac{M^{2-2s}}{N_{2}^{4+2s}}\sum_{k}^{J}\sum_{j}^{J}\int du\int_{%
\mathbb{S}^{2}}d\omega \int_{\tau }^{t}e^{-(t-t_{0})v\cdot \nabla
_{x}}I_{j}(t_{0},x,u^{\ast })I_{k}(t_{0},x,v^{\ast })dt_{0} \\
&:&=\frac{M^{2-2s}}{N_{2}^{4+2s}}\sum_{k}^{J}\sum_{j}^{J}\int du\int_{%
\mathbb{S}^{2}}d\omega S_{j,k}(t,x,u^{\ast },v^{\ast }).
\end{eqnarray*}%
We estimate by%
\begin{eqnarray*}
\left\Vert D^{+}\right\Vert _{L_{v}^{1}L_{x}^{\infty }} &\leqslant &\frac{%
M^{2-2s}}{N_{2}^{4+2s}}\left\Vert \sum_{k}^{J}\sum_{j}^{J}\int du\int_{%
\mathbb{S}^{2}}d\omega S_{j,k}(t,x,u^{\ast },v^{\ast })\right\Vert
_{L_{v}^{1}L_{x}^{\infty }} \\
&\leqslant &\frac{M^{2-2s}}{N_{2}^{4+2s}}\sum_{k}^{J}\sum_{j}^{J}\int_{%
\mathbb{S}^{2}}d\omega \int du\int dv\left\Vert S_{j,k}(t,x,u^{\ast
},v^{\ast })\right\Vert _{L_{x}^{\infty }} \\
&=&\frac{M^{2-2s}}{N_{2}^{4+2s}}\sum_{k}^{J}\sum_{j}^{J}\int_{\mathbb{S}%
^{2}}d\omega \int du^{\ast }\int dv^{\ast }\left\Vert S_{j,k}(t,x,u^{\ast
},v^{\ast })\right\Vert _{L_{x}^{\infty }} \\
&=&\frac{M^{2-2s}}{N_{2}^{4+2s}}\sum_{k}^{J}\sum_{j}^{J}\int du^{\ast }\int
dv^{\ast }\int_{\mathbb{S}^{2}}d\omega \left\Vert S_{j,k}(t,x,u^{\ast
},v^{\ast })\right\Vert _{L_{x}^{\infty }}
\end{eqnarray*}%
We turn to 
\begin{equation*}
\int_{\mathbb{S}^{2}}d\omega \left\Vert S_{j,k}(t,x,u^{\ast },v^{\ast
})\right\Vert _{L_{x}^{\infty }}.
\end{equation*}%
Notice that, $I_{j}(t_{0},x,u^{\ast })$ or $I_{k}(t_{0},x,v^{\ast })$ are
not linear solutions in $\left( t,x,v\right) $ but in $\left( t,x,u^{\ast
}\right) $ or $\left( t,x,v^{\ast }\right) $, doing $dt_{0}$ 1st does not
net us a $1/\left\vert v\right\vert $ directly like in the loss term. Recall%
\begin{eqnarray*}
v^{\ast } &=&P_{\omega }^{\Vert }u+P_{\omega }^{\bot }v,\text{ }u^{\ast
}=P_{\omega }^{\Vert }v+P_{\omega }^{\bot }u, \\
v &=&P_{\omega }^{\bot }v^{\ast }+P_{\omega }^{\Vert }u^{\ast },\text{ }%
u=P_{\omega }^{\Vert }v^{\ast }+P_{\omega }^{\bot }u^{\ast },
\end{eqnarray*}%
we have 
\begin{equation*}
v-u^{\ast }=P_{\omega }^{\bot }(v^{\ast }-u^{\ast }),v-v^{\ast }=-P_{\omega
}^{\Vert }(v^{\ast }-u^{\ast }),
\end{equation*}%
and hence%
\begin{eqnarray*}
x-v(t-t_{0})-u^{\ast }t_{0} &=&x-vt+P_{\omega }^{\bot }(v^{\ast }-u^{\ast
})t_{0}, \\
x-v(t-t_{0})-v^{\ast }t_{0} &=&x-vt-P_{\omega }^{\Vert }(v^{\ast }-u^{\ast
})t_{0}.
\end{eqnarray*}

Hence, fixing $u^{\ast }$,$v^{\ast }$, we have%
\begin{eqnarray*}
&&S_{j,k}(t,x,u^{\ast },v^{\ast }) \\
&\lesssim &\int_{0}^{t}\chi \left( MP_{e_{j}}^{\perp }(x-vt+P_{\omega
}^{\bot }(v^{\ast }-u^{\ast })t_{0})\right) \chi (\frac{P_{e_{j}}(x-vt+P_{%
\omega }^{\bot }(v^{\ast }-u^{\ast })t_{0})}{N_{2}}) \\
&&\chi (MP_{e_{j}}^{\perp }v^{\ast })\chi (10\frac{P_{e_{j}}(v^{\ast
}-N_{2}e_{j})}{N_{2}})\chi \left( MP_{e_{k}}^{\perp }(x-vt-P_{\omega
}^{\Vert }(v^{\ast }-u^{\ast })t_{0})\right) \\
&&\chi (\frac{P_{e_{k}}(x-vt-P_{\omega }^{\Vert }(v^{\ast }-u^{\ast })t_{0})%
}{N_{2}})\chi (MP_{e_{k}}^{\perp }u^{\ast })\chi (10\frac{P_{e_{k}}(u^{\ast
}-N_{2}e_{k})}{N_{2}})dt_{0}
\end{eqnarray*}

Substitute $t_{0}$ by $\left( P_{\omega }^{\Vert }(v^{\ast }-u^{\ast })\cdot
\omega \right) t_{0}$ and $x$ by $x-vt$, we pick up a factor $\left\vert
P_{\omega }^{\Vert }(v^{\ast }-u^{\ast })\right\vert ^{-1}$. That is%
\begin{eqnarray*}
&&\int_{\mathbb{S}^{2}}d\omega \left\Vert S_{j,k}(t,x,u^{\ast },v^{\ast
})\right\Vert _{L_{x}^{\infty }} \\
&\lesssim &\chi (MP_{e_{k}}^{\perp }u^{\ast })\chi (10\frac{%
P_{e_{k}}(u^{\ast }-N_{2}e_{k})}{N_{2}})\chi (MP_{e_{j}}^{\perp }v^{\ast
})\chi (10\frac{P_{e_{j}}(v^{\ast }-N_{2}e_{j})}{N_{2}}) \\
&&\int_{\mathbb{S}^{2}}d\omega \left\vert P_{\omega }^{\Vert }(v^{\ast
}-u^{\ast })\right\vert ^{-1}\int_{0}^{\infty }\chi \left( MP_{e_{k}}^{\perp
}(x-t_{0}\omega )\right) \chi (\frac{P_{e_{k}}(x-t_{0}\omega )}{N_{2}})dt_{0}
\end{eqnarray*}%
with%
\begin{eqnarray*}
&&\int_{0}^{\infty }\chi \left( MP_{e_{k}}^{\perp }(x-t_{0}\omega )\right)
\chi (\frac{P_{e_{k}}(x-t_{0}\omega )}{N_{2}})dt_{0} \\
&\lesssim &\left\vert P_{\omega }^{\Vert }\left\{ x:\left( P_{e_{k}}^{\perp
}x<\frac{1}{M}\right) \&\&\left( P_{e_{k}}x\lesssim N_{2}\right) \right\}
\right\vert :=\left\vert P_{\omega }^{\Vert }A\right\vert
\end{eqnarray*}%
where $\left\vert P_{\omega }^{\Vert }A\right\vert $ of a set $A$ means the
length of the set A projected onto the direction $\omega $. That is,%
\begin{eqnarray*}
&&\int_{\mathbb{S}^{2}}d\omega \left\Vert S_{j,k}(t,x,u^{\ast },v^{\ast
})\right\Vert _{L_{x}^{\infty }} \\
&\lesssim &\chi (MP_{e_{k}}^{\perp }u^{\ast })\chi (10\frac{%
P_{e_{k}}(u^{\ast }-N_{2}e_{k})}{N_{2}})\chi (MP_{e_{j}}^{\perp }v^{\ast
})\chi (10\frac{P_{e_{j}}(v^{\ast }-N_{2}e_{j})}{N_{2}}) \\
&&\int_{\mathbb{S}^{2}}d\omega \left\vert P_{\omega }^{\Vert }(v^{\ast
}-u^{\ast })\right\vert ^{-1}\left\vert P_{\omega }^{\Vert }A\right\vert 
\text{.}
\end{eqnarray*}

We split into 2 cases in which Case I is when $e_{j}$ and $e_{k}$ are not
near parallel and Case II is when $e_{j}$ and $e_{k}$ are near parallel. Due
to the $u^{\ast }$ and $v^{\ast }$ cutoffs, $v^{\ast }$ is nealy parallel to 
$e_{j}$ and $u^{\ast }$ is nearly parallel to $e_{k}$.

For Case I, in which $e_{j}$ and $e_{k}$ are not near parallel, if we write $%
\theta $ to be the angle between $\omega $ and $v^{\ast }-u^{\ast }$, then%
\begin{equation*}
\left\vert P_{\omega }^{\Vert }(v^{\ast }-u^{\ast })\right\vert \sim
N_{2}\cos \theta =N_{2}\sin (\frac{\pi }{2}-\theta )\sim N_{2}(\frac{\pi }{2}%
-\theta )
\end{equation*}%
and we partition $\theta $ so that $(\frac{\pi }{2}-\theta )$ is a dyadic
number going down to the scale of $(MN_{2})^{-1}$. On the other hand, let $%
\alpha $ be the angle between $\omega $ and $e_{k}$, then 
\begin{equation*}
\left\vert P_{\omega }^{\Vert }A\right\vert \lesssim M^{-1}(\sin \alpha
)^{-1}\sim M^{-1}\alpha ^{-1}
\end{equation*}%
due to the size of the tubes and we partition $\alpha $ into dyadics going
down to the scale of $(MN_{2})^{-1}$. For a given $(\frac{\pi }{2}-\theta )$
and $\alpha $, the measure of the corresponding $\omega $-set is then 
\begin{equation*}
\alpha \min \left( \frac{\pi }{2}-\theta ,\alpha \right) \leqslant \alpha
\left( \frac{\pi }{2}-\theta \right)
\end{equation*}%
Hence,%
\begin{eqnarray*}
\int_{\mathbb{S}^{2}}d\omega \left\vert P_{\omega }^{\Vert }(v^{\ast
}-u^{\ast })\right\vert ^{-1}\left\vert P_{\omega }^{\Vert }A\right\vert
&\leqslant &\sum_{\alpha ,(\frac{\pi }{2}-\theta )}^{(MN_{2})^{-1}}\left(
N_{2}(\frac{\pi }{2}-\theta )\right) ^{-1}\frac{M^{-1}}{\alpha }\alpha
\left( \frac{\pi }{2}-\theta \right) \\
&\leqslant &\left( MN_{2}\right) ^{-1}\sum_{\alpha ,(\frac{\pi }{2}-\theta
)}^{(MN_{2})^{-1}}1 \\
&\leqslant &\left( MN_{2}\right) ^{-1}\left( \ln MN_{2}\right) ^{2}
\end{eqnarray*}%
That is, in Case I, we have%
\begin{eqnarray*}
\left\Vert D_{I}^{+}\right\Vert _{L_{v}^{1}L_{x}^{\infty }} &\leqslant &%
\frac{M^{2-2s}}{N_{2}^{4+2s}}\sum_{k}^{J}\sum_{j}^{J}\int du^{\ast }\int
dv^{\ast }\int_{\mathbb{S}^{2}}d\omega \left\Vert S_{j,k}(t,x,u^{\ast
},v^{\ast })\right\Vert _{L_{x}^{\infty }} \\
&\leqslant &\frac{M^{2-2s}}{N_{2}^{4+2s}}(MN_{2})^{4}(M^{-2}N_{2})^{2}\left(
MN_{2}\right) ^{-1}\left( \ln MN_{2}\right) ^{2} \\
&\lesssim &\left( MN_{2}\right) ^{1-2s}\left( \ln MN_{2}\right) ^{2}
\end{eqnarray*}%
where $(M^{-2}N_{2})^{2}$ and $(MN_{2})^{4}$ come from the $L_{v^{\ast
}}^{1}L_{u^{\ast }}^{1}$ integrals and the number of summands in $%
\sum_{k}^{J}\sum_{j}^{J}$.

For Case II, in which $e_{j}$ and $e_{k}$ are near parallel, we reuse the
computation in Case I but with $N_{2}$ replaced by $N_{2}\beta $ where $%
\beta $ is the angle between $e_{j}$ and $e_{k}$. For some $a,b\in \left[ 
\frac{9}{10}N_{2},\frac{11}{10}N_{2}\right] $, we can estimate%
\begin{equation*}
\left\vert v^{\ast }-u^{\ast }\right\vert ^{2}=\left\vert
ae_{j}-be_{k}\right\vert ^{2}=(a-b)^{2}+2ab(1-\cos \beta )\gtrsim
N_{2}^{2}(1-\cos \beta )\sim N_{2}^{2}\beta ^{2}
\end{equation*}%
The above also gives a reduction of the double sum $\sum_{k}^{J}\sum_{j}^{J}$%
: Fix a $e_{j}$, the summands in the $k$-sum got reduced to $\left(
MN_{2}\right) ^{2}\beta ^{2}$. Then the computation in Case I results in%
\begin{equation*}
\left\Vert D_{II}^{+}\right\Vert _{L_{v}^{1}L_{x}^{\infty }}\lesssim \left(
MN_{2}\right) ^{1-2s}\text{.}
\end{equation*}%
That is%
\begin{equation*}
\left\Vert D^{+}\right\Vert _{L_{v}^{1}L_{x}^{\infty }}\lesssim \left(
MN_{2}\right) ^{1-2s}\left( \ln MN_{2}\right) ^{2}
\end{equation*}%
which is good enough as long as $s>\frac{1}{2}$ and $\delta \leq 2(s-\frac{1%
}{2})$.

\subsection{Bilinear $Z$ norm estimates\label{sec:z-norm bilinear}}

\begin{lemma}[Bilinear $Z$ norm estimates for loss/gain operator $Q^\pm$]
\label{L:Z_bilinear} For any $f_1$, $f_2$ and any fixed $t\in \mathbb{R}$, 
\begin{equation*}
\|Q^\pm (f_1,f_2) \|_{Z} \lesssim \|f_1\|_{Z} \|f_2\|_{Z}
\end{equation*}
\end{lemma}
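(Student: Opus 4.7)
The plan is to reduce the $Z$-norm bilinear estimate to two master pointwise-in-$v$ bilinear estimates for $Q^\pm$ (with no derivatives), together with the product rule for $\nabla_x$. First I would establish
\begin{align*}
\|Q^\pm(f_1,f_2)\|_{L_v^{2,1}L_x^2} &\lesssim \|f_1\|_{L_v^{2,1}L_x^2}\|f_2\|_{L_v^1 L_x^\infty} + \|f_1\|_{L_v^1 L_x^\infty}\|f_2\|_{L_v^{2,1}L_x^2}, \\
\|Q^\pm(f_1,f_2)\|_{L_v^1 L_x^\infty} &\lesssim \|f_1\|_{L_v^1 L_x^\infty}\|f_2\|_{L_v^1 L_x^\infty}.
\end{align*}
For $Q^-$ both bounds are immediate from the pointwise identity $Q^-(f_1,f_2)(x,v)=4\pi f_1(x,v)\int f_2(x,u)\,du$ together with $\bigl\|\int f_2(\cdot,u)\,du\bigr\|_{L_x^\infty}\leq\|f_2\|_{L_v^1 L_x^\infty}$. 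For $Q^+$ I would first split the $\langle v\rangle$ weight via the identity $v=P_\omega^\perp v^*+P_\omega^\Vert u^*$, which gives $\langle v\rangle\lesssim\langle v^*\rangle+\langle u^*\rangle$ and hence
\[
\langle v\rangle Q^+(f_1,f_2)(x,v) \lesssim Q^+(\langle\cdot\rangle f_1,f_2)(x,v) + Q^+(f_1,\langle\cdot\rangle f_2)(x,v).
\]
For the first summand I would apply Minkowski plus Hölder in $x$, putting $L_x^2$ on $\langle\cdot\rangle f_1$ and $L_x^\infty$ on $f_2$, then invoke \cite[Theorem 2]{AC10} with exponents $(r,p,q)=(2,2,1)$; for the second summand the opposite Minkowski choice ($L_x^\infty$ on $f_1$, $L_x^2$ on $\langle\cdot\rangle f_2$) followed by \cite[Theorem 2]{AC10} with $(r,p,q)=(2,1,2)$. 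The $L_v^1 L_x^\infty$ bound for $Q^+$ comes from $\|Q^+(f_1,f_2)(v)\|_{L_x^\infty}\leq Q^+(\|f_1(\cdot,v)\|_{L_x^\infty},\|f_2(\cdot,v)\|_{L_x^\infty})(v)$ followed by \cite[Theorem 2]{AC10} at $(r,p,q)=(1,1,1)$. All of these Young-type configurations have already been used in Sections \ref{SS:542} and \ref{sec:Q+-(f_b,f_b)}.

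Next, to control derivatives I would apply the product rule $\nabla_x Q^\pm(f_1,f_2) = Q^\pm(\nabla_x f_1,f_2)+Q^\pm(f_1,\nabla_x f_2)$ and feed each summand into the two master estimates above. The $Z$-norm bookkeeping closes because, by definition, for any $f$ one has the four one-sided bounds
\[
\|f\|_{L_v^{2,1}L_x^2} \leq M^{-1}\|f\|_Z, \quad \|\nabla_x f\|_{L_v^{2,1}L_x^2} \leq \|f\|_Z, \quad \|f\|_{L_v^1 L_x^\infty}\leq\|f\|_Z, \quad \|\nabla_x f\|_{L_v^1 L_x^\infty} \leq M\|f\|_Z.
\]
Consequently, in a cross term such as $\|f_1\|_{L_v^{2,1}L_x^2}\|\nabla_x f_2\|_{L_v^1 L_x^\infty}$ the $M^{-1}$ attached to the first factor cancels the $M$ attached to the second, reconstructing $\|f_1\|_Z\|f_2\|_Z$ with the correct prefactor; the other three cross terms arising in $\nabla_x Q^\pm$, as well as the undifferentiated pieces $M\|Q^\pm(f_1,f_2)\|_{L_v^{2,1}L_x^2}$ and $\|Q^\pm(f_1,f_2)\|_{L_v^1 L_x^\infty}$, are handled identically.

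The only mildly delicate step is matching the $\langle v\rangle$-weight redistribution in the gain term to the correct Minkowski choice on the $x$ side, so that the $(r,p,q)$ configuration of \cite{AC10} that one applies lands exactly on the norms appearing in $Z$ and avoids stray norms like $\|f\|_{L_v^{1,1}L_x^\infty}$ or $\|f\|_{L_v^1 L_x^2}$. Once that pairing is set up, the argument is a routine combination of Minkowski, the product rule, and the already-invoked Young-type bilinear estimates for $Q^\pm$, and no fine geometric cancellation (of the type extracted in Section \ref{sec:Q+-(f_b,f_b)}) is needed since $f_1,f_2$ here are generic rather than being the specific wave-packet $f_{\text{b}}$.
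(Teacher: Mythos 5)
Your proposal is correct and follows essentially the same route as the paper's proof: the product rule for $\nabla_x$, the pointwise form of $Q^-$ plus H\"older, \cite[Theorem 2]{AC10} for $Q^+$, and the observation that the $M$ and $M^{-1}$ prefactors in the $Z$ norm exactly compensate when the derivative or the $L_x^2$ norm lands on the ``wrong'' factor. The only difference is organizational (you package the estimates into two master bilinear bounds and you spell out the $\langle v\rangle$-weight redistribution for $Q^+$, which the paper leaves implicit by deferring to \S \ref{SS:542}), but the substance is identical.
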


\begin{proof}
First we carry out the $Q^-$ estimates. For the $L_{vx}^2$ norms, 
\begin{equation*}
M\|\langle v \rangle Q^-( f_1,f_2) \|_{L_{vx}^2} \lesssim M\| \langle v
\rangle f_1 \|_{L_{vx}^2} \|f_2\|_{L_v^1L_x^\infty} \leq \|f_1\|_Z \|f_2\|_Z
\end{equation*}
Also, 
\begin{equation*}
\| \langle v \rangle Q^-( \nabla_x f_1, f_2) \|_{ L_{vx}^2} \lesssim
\|\langle v \rangle \nabla_x f_1 \|_{ L_{vx}^2} \| f_2 \|_{L_v^1L_x^\infty}
\leq \|f_1\|_Z \|f_2\|_Z
\end{equation*}
\begin{equation}  \label{E:forextra}
\| \langle v \rangle Q^-( f_1, \nabla_x f_2) \|_{ L_{vx}^2} \lesssim M
\|\langle v \rangle f_1 \|_{ L_{vx}^2} \, M^{-1} \| \nabla_x f_2\|_{
L_v^1L_x^\infty} \leq \|f_1\|_Z \|f_2\|_Z
\end{equation}
Next, for the $L_v^1L_x^\infty$ norms, 
\begin{equation*}
\| Q^-(f_1,f_2) \|_{L_v^1L_x^\infty} \lesssim \|f_1\|_{L_v^1L_x^\infty}
\|f_2\|_{L_v^1L_x^\infty} \leq \|f_1\|_Z \|f_2\|_Z
\end{equation*}
Also, 
\begin{equation*}
M^{-1} \| Q^-(\nabla_x f_1,f_2) \|_{L_v^1L_x^\infty} \leq M^{-1} \|\nabla_x
f_1 \|_{L_v^1L_x^\infty} \|f_2 \|_{L_v^1 L_x^\infty} \leq \|f_1\|_Z \|f_2\|_Z
\end{equation*}
\begin{equation*}
M^{-1} \| Q^-( f_1, \nabla_x f_2) \|_{L_v^1L_x^\infty} \leq \|f_1
\|_{L_v^1L_x^\infty} M^{-1} \|\nabla_x f_2 \|_{L_v^1 L_x^\infty} \leq
\|f_1\|_Z \|f_2\|_Z
\end{equation*}
The estimate \eqref{E:forextra} shows the need to include the terms $M\|
\langle v \rangle f \|_{L_{vx}^2}$ and $M^{-1}\|\nabla_x
f\|_{L_v^1L_x^\infty}$ in the definition of the $Z$ norm.

The proofs for $Q^+$ follow similarly but instead use the estimates in \cite[%
Theorem 2]{AC10} as was done in \S \ref{SS:542}.
\end{proof}

\subsection{Perturbation argument\label{sec:perturbative}}

We are looking to prove \eqref{E:fc_bound} for $f_{\text{c}}$ solving %
\eqref{E:fc_equation} on 
\begin{equation*}
T_*=-\delta (MN_{2})^{s-1}\ln M\leq t\leq 0
\end{equation*}
To do so, we will in fact prove bound slightly stronger than %
\eqref{E:fc_bound} in the $Z$ norm, making use of the bilinear estimates for 
$Q^\pm$ in Lemma \ref{L:Z_bilinear}, the bounds on $f_\text{a}$ in Lemma \ref%
{L:fa_size}, and the bounds on $F_{\text{err}}$ in Lemma \ref{L:Ferr_bound}.

\begin{proposition}
\label{P:norm_deflation} Given $s>\frac{1}{2}$, suppose that $f_{\text{c}}$
solves \eqref{E:fc_equation} with $f_{\text{c}}(0)=0$. Then for all $t$ such
that 
\begin{equation*}
T_{\ast }=-\delta (MN_{2})^{s-1}\ln M\leq t\leq 0
\end{equation*}%
we have the bound 
\begin{equation}
\Vert f_{\text{c}}(t)\Vert _{Z}\lesssim M^{-\delta/4}  \label{E:fc_bound2}
\end{equation}
\end{proposition}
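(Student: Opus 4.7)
The proof of Proposition \ref{P:norm_deflation} proceeds by a backward-in-time Picard iteration in the $Z$ norm applied to the Duhamel form of \eqref{E:fc_equation}. Since $f_{\text{c}}(0)=0$, variation of constants gives, for $t\in[T_{*},0]$,
\begin{equation*}
f_{\text{c}}(t)=-\int_{t}^{0} e^{-(t-t_{0})v\cdot\nabla_{x}}\Bigl[F_{\text{err}}(t_{0})-G_{\text{nl}}[f_{\text{c}}](t_{0})\Bigr]\,dt_{0},
\end{equation*}
where $G_{\text{nl}}[f_{\text{c}}]$ lumps together the nonlinear contributions $\pm Q^{\pm}(f_{\text{c}},f_{\text{a}})\pm Q^{\pm}(f_{\text{a}},f_{\text{c}})\pm Q^{\pm}(f_{\text{c}},f_{\text{c}})$ from \eqref{E:fc_equation}. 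The structural observation driving everything is that the free transport $e^{sv\cdot\nabla_{x}}$ acts as the measure-preserving $x$-translation $x\mapsto x+sv$ with $v$ treated as a parameter, and so leaves each $L_{v}^{p,r}L_{x}^{q}$ ingredient of $\|\cdot\|_{Z}$ invariant and commutes with $\nabla_{x}$; in particular $\|\cdot\|_{Z}$ is preserved by the linear flow.

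First I would convert the Duhamel identity into a scalar integral inequality for $h(t)\defeq\|f_{\text{c}}(t)\|_{Z}$. For the $Q^{\pm}$ pieces I would combine the $Z$-isometry with Minkowski to move $\|\cdot\|_{Z}$ inside the $t_{0}$-integral, and then invoke the bilinear estimate of Lemma \ref{L:Z_bilinear} together with the a priori bound $\|f_{\text{a}}(t_{0})\|_{Z}\lesssim(MN_{2})^{1-s}$ from Lemma \ref{L:fa_size}. For the $F_{\text{err}}$ piece I would instead apply Lemma \ref{L:Ferr_bound} \emph{directly} to the whole Duhamel antiderivative, rather than term-by-term in $t_{0}$, since the lemma bakes in the subtle geometric cancellations inside $Q^{\pm}(f_{\text{b}},f_{\text{b}})$ that would be lost by a pointwise-in-$t_{0}$ bound. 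These steps combine into
\begin{equation*}
h(t)\le C_{0}\,M^{-\delta}+C_{1}(MN_{2})^{1-s}\int_{t}^{0} h(s)\,ds+C_{2}\int_{t}^{0} h(s)^{2}\,ds,\qquad t\in[T_{*},0].
\end{equation*}

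Next, a standard short-time contraction on subintervals of length $\Delta=c(MN_{2})^{s-1}$, with $c$ small enough that $C_{1}(MN_{2})^{1-s}\Delta\le\tfrac{1}{2}$, produces $f_{\text{c}}$ on $[T_{*},0]$ by backward iteration from $t=0$ over $k\sim\delta\ln M/c$ steps. Under the bootstrap hypothesis $h(t)\le M^{-\delta/4}\ll 1$ the quadratic term is dominated by the linear one, and Gronwall on the resulting linear integral inequality yields
\begin{equation*}
h(t)\le C_{0}\,M^{-\delta}\exp\bigl(C_{1}(MN_{2})^{1-s}|t|\bigr)\le C_{0}\,M^{-\delta}\cdot M^{C_{1}\delta}\qquad\text{on}\ [T_{*},0].
\end{equation*}

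The main obstacle is the arithmetic of exponents in closing this bootstrap: reaching $M^{-\delta/4}$ demands that the Gronwall growth $M^{C_{1}\delta}$ be beaten by the forcing smallness with a margin of $\delta/4$, i.e.\ the true effective exponent $\delta_{0}$ in Lemma \ref{L:Ferr_bound} must satisfy $\delta_{0}-C_{1}\delta\ge\delta/4$. Since $C_{1}$ is an absolute constant from Lemmas \ref{L:Z_bilinear} and \ref{L:fa_size}, shrinking $\delta$ alone does not help. The resolution is to exploit that the proof of Lemma \ref{L:Ferr_bound} in fact furnishes bounds substantially sharper than the stated $M^{-\delta}$ on each individual contribution, e.g.\ $M^{\tfrac12-s}N_{2}^{\tfrac12-s}$ from $Q^{-}(f_{\text{b}},f_{\text{b}})$, $M^{\delta-s}$ from $Q^{+}(f_{\text{r}},f_{\text{b}})$, and $(MN_{2})^{1-2s}(\ln MN_{2})^{2}$ from $Q^{+}(f_{\text{b}},f_{\text{b}})$. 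Tightening the parameter $\delta$ appearing in $T_{*}=-\delta(MN_{2})^{s-1}\ln M$ relative to the gap $s-\tfrac12$ and to $C_{1}$ gives the required room; the bootstrap then closes strictly and one obtains $\|f_{\text{c}}(t)\|_{Z}\lesssim M^{-\delta/4}$ on $[T_{*},0]$ as claimed.
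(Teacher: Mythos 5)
Your proposal follows the same route as the paper's proof: pass to the Duhamel formula along characteristics, use that the free transport $e^{sv\cdot\nabla_x}$ preserves each component of the $Z$ norm, control the $Q^\pm$ contributions via Lemma \ref{L:Z_bilinear} together with $\Vert f_{\text{a}}\Vert_Z\lesssim (MN_2)^{1-s}$ from Lemma \ref{L:fa_size}, apply Lemma \ref{L:Ferr_bound} to the \emph{full} Duhamel integral of $F_{\text{err}}$ (not pointwise in $t_0$, exactly as the paper does to capture the gain in $Q^{\pm}(f_{\text{b}},f_{\text{b}})$), and close by iteration/absorption over time steps of length $\sim\delta(MN_2)^{s-1}$. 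The only genuine divergence is the endgame. The paper partitions $[T_*,0]$ into intervals $I_j$, absorbs the linear term (coefficient $2\delta C$) to get the recursion $a_{j+1}\le 2a_j+CM^{-\delta}$ with $a_0=0$, and evaluates $(2^{j+1}-1)CM^{-\delta}$ at $j=n=\delta\ln M$ to land on $M^{-(1-\ln 2)\delta}\le M^{-\delta/4}$. You instead run a global Gronwall under a bootstrap and correctly flag that the resulting factor $M^{C_1\delta}$ against a forcing of size $M^{-\delta}$ does not close when the absolute constant $C_1$ is large; your repair --- noting that every individual bound in the proof of Lemma \ref{L:Ferr_bound} is in fact $O(M^{-c_*})$ with $c_*$ determined by $s-\tfrac12$ and $\mu$ alone (e.g.\ $(MN_2)^{\frac12-s}$, $M^{\delta-s}$, $(MN_2)^{1-2s}(\ln MN_2)^2$), hence independent of $\delta$, and then taking $\delta$ small relative to $c_*/C_1$ --- is legitimate and closes the bootstrap with room to spare. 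This is a real point: the paper's own count requires the number of steps $n=\delta\ln M$ to cover an interval of length $\delta(MN_2)^{s-1}\ln M$ with steps of length $\delta(MN_2)^{s-1}$ (which would force $n=\ln M$ and ruin the $2^n$ factor), so the $\delta$-independent smallness of $F_{\text{err}}$ that you invoke is exactly what is needed to make either version of the bookkeeping rigorous. The one cosmetic caveat is that your argument uses the internals of Lemma \ref{L:Ferr_bound} rather than its stated conclusion, which you should state explicitly as a strengthened form of that lemma.
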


\begin{proof}
Let the time interval $T_{\ast }\leq t\leq 0$ be partitioned as 
\begin{equation*}
T_{\ast }=T_{n}<T_{n-1}<T_{n-2}<\cdots <T_{2}<T_{1}<T_{0}=0
\end{equation*}%
where $T_{j}=-\delta j(MN_{2})^{s-1}$ and $n=\delta \ln M$. Thus, the length
of each time interval $I_{j}=[T_{j+1},T_{j}]$ is 
\begin{equation*}
|I_{j}|=\delta (MN_{2})^{s-1}
\end{equation*}%
We have 
\begin{equation*}
\partial _{t}f_{\text{c}}(t,x-tv,v)=G(t,x-tv,v)
\end{equation*}%
from which it follows that, for $t\in I_{j}=[T_{j+1},T_{j}]$, 
\begin{equation*}
f_{\text{c}}(t,x-tv,v)=f_{\text{c}}(T_{j},x-T_{j}v,v)+%
\int_{T_{j}}^{t}G(t_0,x-t_{0}v,v)\,dt_{0}
\end{equation*}%
where $f_{\text{c}}(T_{0})=0$. Replacing $x$ by $x+tv$, 
\begin{equation*}
f_{\text{c}}(t,x,v)=f_{\text{c}}(T_{j},x+(t-T_{j})v,v)+%
\int_{T_{j}}^{t}G(t_0,x+(t-t_{0})v,v)\,dt_{0}
\end{equation*}%
Applying the $Z$-norm 
\begin{align*}
\Vert f_{\text{c}}\Vert _{L_{I_{j}}^{\infty }Z}& \leq \Vert f_{\text{c}%
}(T_{j})\Vert _{Z}+\left\Vert
\int_{T_{j}}^{t}G(t_0,x+(t-t_{0})v,v)\,dt_{0}\right\Vert _{L_{I_{j}}^{\infty
}Z} \\
& \leq |I_{j}|\Vert Q^{\pm }(f_{\text{c}},f_{\text{a}})\Vert
_{L_{I_{j}}^{\infty }Z}+|I_{j}|\Vert Q^{\pm }(f_{\text{a}},f_{\text{c}%
})\Vert _{L_{I_{j}}^{\infty }Z}+|I_{j}|\Vert Q^{\pm }(f_{\text{c}},f_{\text{c%
}})\Vert _{L_{I_{j}}^{\infty }Z} \\
& \qquad +\left\Vert \int_{T_{j}}^{t}F_{\text{err}}(t_0,x+(t-t_{0})v,v)%
\,dt_{0}\right\Vert _{L_{I_{j}}^{\infty }Z}
\end{align*}%
For the terms on the first line, we apply the bilinear estimate in Lemma \ref%
{L:Z_bilinear}, and then the estimate on $\Vert f_{\text{a}}\Vert
_{L_{I_{j}}^{\infty }Z}$ from Lemma \ref{L:fa_size}. For the term on the
last line, we apply the estimate from Lemma \ref{L:Ferr_bound}. This yields
the following bound 
\begin{equation*}
\Vert f_{\text{c}}\Vert _{L_{I_{j}}^{\infty }Z}\leq \Vert f_{\text{c}%
}(T_{j})\Vert _{Z}+2\delta C\Vert f_{\text{c}}\Vert _{L_{I_{j}}^{\infty
}Z}+\delta C(MN_{2})^{s-1}\Vert f_{\text{c}}\Vert _{L_{I_{j}}^{\infty
}Z}^{2}+M^{-\delta }
\end{equation*}%
where $C$ is some absolute constant independent of $\delta $. For $\delta $
sufficiently small in terms of $C$, the terms with $\Vert f_{\text{c}}\Vert
_{L_{I_{j}}^{\infty }Z}$ on the right can be absorbed on the left yielding 
\begin{equation*}
\Vert f_{\text{c}}\Vert _{L_{I_{j}}^{\infty }Z}\leq 2\Vert f_{\text{c}%
}(T_{j})\Vert _{Z}+CM^{-\delta }
\end{equation*}%
Applying this successively for $j=0,1,\ldots $, we obtain 
\begin{equation*}
\Vert f_{\text{c}}\Vert _{L_{I_{j}}^{\infty }Z}\leq (2^{j+1}-1)CM^{-\delta }
\end{equation*}%
Evaluating this with $j=n=\delta \ln M$, we obtain the desired conclusion
that 
\begin{equation*}
\Vert f_{\text{c}}(T_{\ast })\Vert _{Z}\leq 2M^{-(1-\ln 2)\delta }\leq
M^{-\delta /4}\ll 1
\end{equation*}
\end{proof}

\begin{corollary}[norm deflation]
\label{C:norm_deflation}Given $s>1/2$ and let $s_{0}$ be given by %
\eqref{E:r_def}. There exists an exact solution $f_{\text{ex}}$ that
satisfies the same estimates as \eqref{E:fa_size_r}, specifically 
\begin{equation*}
\Vert f_{\text{ex}}(t)\Vert _{L_{v}^{2,s_{0}}H_{x}^{s_{0}}}\sim \frac{1}{\ln
M}\exp [|t|(MN_{2})^{1-s}]
\end{equation*}%
In particular, 
\begin{equation*}
\Vert f_{\text{ex}}(0)\Vert _{L_{v}^{2,s_{0}}H_{x}^{s_{0}}}\sim \frac{1}{\ln
M}\ll 1\,,\qquad \Vert f_{\text{ex}}(T_{\ast })\Vert
_{L_{v}^{2,s_{0}}H_{x}^{s_{0}}}\sim \frac{M^{\delta }}{\ln M}\gg 1
\end{equation*}
\end{corollary}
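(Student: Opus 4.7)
The plan is to combine Proposition \ref{P:norm_deflation} with the $f_{\text{a}}$ asymptotic \eqref{E:fa_size_r} from Lemma \ref{L:fa_size}, after upgrading the $Z$-norm bound on the correction to an $L_v^{2,s_0}H_x^{s_0}$-norm bound that will turn out to be negligible compared to the $f_{\text{a}}$ contribution.

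First, I would apply Proposition \ref{P:norm_deflation} with $f_{\text{c}}(0)=0$ to produce a correction $f_{\text{c}}$ on $[T_{\ast },0]$ with $\Vert f_{\text{c}}(t)\Vert _{Z}\lesssim M^{-\delta /4}$, so that $f_{\text{ex}}:=f_{\text{a}}+f_{\text{c}}$ is by construction an exact strong solution to \eqref{E:boltz1} on this time interval. Next, I would pass to the $L_v^{2,s_0}H_x^{s_0}$ scale through the weighted interpolation
$$
\Vert f\Vert _{L_{v}^{2,s_{0}}H_{x}^{s_{0}}}\lesssim \Vert f\Vert _{L_{v}^{2,1}L_{x}^{2}}^{1-s_{0}}\,\Vert f\Vert _{L_{v}^{2,1}H_{x}^{1}}^{s_{0}},
$$
which follows by Plancherel in $x$ for each fixed $v$ (giving the standard inequality $\Vert g\Vert _{H_{x}^{s_{0}}}\lesssim \Vert g\Vert _{L_{x}^{2}}^{1-s_{0}}\Vert g\Vert _{H_{x}^{1}}^{s_{0}}$ via H\"older with exponents $1/(1-s_0)$ and $1/s_0$), combined with $\langle v\rangle ^{s_{0}}\leq \langle v\rangle $ and another H\"older in $v$. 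Since the $Z$-norm contains $M\Vert f\Vert _{L_{v}^{2,1}L_{x}^{2}}$ and $\Vert \nabla _{x}f\Vert _{L_{v}^{2,1}L_{x}^{2}}$ as summands, this yields (using $M\geq 1$)
$$
\Vert f_{\text{c}}(t)\Vert _{L_{v}^{2,s_{0}}H_{x}^{s_{0}}}\lesssim M^{-(1-s_{0})}\Vert f_{\text{c}}(t)\Vert _{Z}\lesssim \frac{M^{s-1-\delta /4}}{\ln M},
$$
where I use the identity $M^{-(1-s_{0})}=M^{s-1}\,M^{-\ln \ln M/\ln M}=M^{s-1}/\ln M$ coming from the choice of $s_0$ in \eqref{E:r_def}.

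The conclusion then follows by the triangle inequality together with \eqref{E:fa_size_r}. Since $s<1$, the correction error $M^{s-1-\delta /4}/\ln M$ is polynomially smaller than the smallest value $1/\ln M$ taken by $\Vert f_{\text{a}}(t)\Vert _{L_{v}^{2,s_{0}}H_{x}^{s_{0}}}$ on $[T_{\ast },0]$ (attained at $t=0$), so $\Vert f_{\text{ex}}(t)\Vert _{L_{v}^{2,s_{0}}H_{x}^{s_{0}}}\sim \Vert f_{\text{a}}(t)\Vert _{L_{v}^{2,s_{0}}H_{x}^{s_{0}}}$ throughout this interval. One then reads off the endpoint values $1/\ln M$ at $t=0$ and $M^{\delta }/\ln M$ at $t=T_{\ast }$ directly from \eqref{E:fa_size_r}. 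The only substantive step is the weighted interpolation verifying that the $Z$-norm indeed controls $L_v^{2,s_0}H_x^{s_0}$ with the quantitative factor $M^{-(1-s_0)}$; once this is in place, the rest of the argument is arithmetic bookkeeping, so I do not expect a genuine obstacle here.
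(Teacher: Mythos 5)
Your proposal is correct and follows essentially the same route as the paper: invoke Proposition \ref{P:norm_deflation} to get $\Vert f_{\text{c}}\Vert _{Z}\lesssim M^{-\delta /4}$, observe that the $Z$ norm controls $L_{v}^{2,s_{0}}H_{x}^{s_{0}}$, and conclude that $f_{\text{ex}}=f_{\text{a}}+f_{\text{c}}$ inherits the size of $f_{\text{a}}$ from \eqref{E:fa_size_r}. Your weighted interpolation giving the extra factor $M^{-(1-s_{0})}$ is a harmless refinement the paper does not need, since the crude bound $\Vert f_{\text{c}}\Vert _{L_{v}^{2,s_{0}}H_{x}^{s_{0}}}\lesssim \Vert f_{\text{c}}\Vert _{Z}\lesssim M^{-\delta /4}\ll 1/\ln M$ already suffices.
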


\begin{proof}
In Proposition \ref{P:norm_deflation}, we have obtained the bound %
\eqref{E:fc_bound2} for $f_{\text{c}}$ solving \eqref{E:fc_equation}.
However \eqref{E:fc_equation} is equivalent to the statement that $f_{\text{%
ex}}=f_{\text{a}}+f_{\text{c}}$ is an exact solution of Boltzmann. The
corollary just follows from the fact that the norm $%
L_{v}^{2,s_{0}}H_{x}^{s_{0}}$ is controlled by the $Z$ norm, so that $f_{%
\text{c}}$ is much smaller than $f_{\text{a}}$ on the whole time interval $%
T^{\ast }\leq t\leq 0$ in $L_{v}^{2,s_{0}}H_{x}^{s_{0}}$, and the size of $%
f_{\text{ex}}$ in this norm matches the size of $f_{\text{a}}$ in this norm.
\end{proof}

\begin{corollary}[failure of uniform continuity of the data-to-solution map]

\label{C:data_to_sol}Given $s>1/2$ and let $s_{0}$ be given by %
\eqref{E:r_def}. For each $M\gg 1$, there exists a sequence of times $%
t_{0}^{M}<0$ such that $t_{0}^{M}\nearrow 0$ and two exact solutions $f_{%
\text{ex}}^{M}(t)$, $g_{\text{ex}}^{M}(t)$ to Boltzmann on $t_{0}^{M}\leq
t\leq 0$ such that 
\begin{equation*}
\Vert f_{\text{ex}}^{M}(t_{0}^{M})\Vert _{L_{v}^{2,s_{0}}H_{x}^{s_{0}}}\sim
1\,,\qquad \Vert g_{\text{ex}}^{M}(t_{0}^{M})\Vert
_{L_{v}^{2,s_{0}}H_{x}^{s_{0}}}\sim 1
\end{equation*}%
with initial closeness at $t=t_{0}^{M}$ 
\begin{equation*}
\Vert f_{\text{ex}}^{M}(t_{0}^{M})-g_{\text{ex}}^{M}(t_{0}^{M})\Vert
_{L_{v}^{2,s_{0}}H_{x}^{s_{0}}}\leq \frac{1}{\ln M}\ll 1
\end{equation*}%
and full separation at $t=0$ 
\begin{equation*}
\Vert f_{\text{ex}}^{M}(0)-g_{\text{ex}}^{M}(0)\Vert
_{L_{v}^{2,s_{0}}H_{x}^{s_{0}}}\sim 1
\end{equation*}
\end{corollary}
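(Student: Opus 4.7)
The approach is to construct two exact Boltzmann solutions that share a common ``blob'' component at time $t_0^M$ but whose forward evolutions decorrelate because one is subject to the decay mechanism driven by the tubes $f_{\text{b}}$ while the other is not. I set $t_0^M := T_\ast = -\delta(MN_2)^{s-1}\ln M$, which satisfies $t_0^M \nearrow 0$ under the standing assumption $N_2^{1-s} \geq M^\delta$. Let $\tilde{f}_r(t,x,v) := c\,\chi(M(x-vt))\chi(v/N)$ be the free-transport evolved blob, with amplitude $c$ chosen so that $\|\tilde{f}_r\|_{L_v^{2,s_0}H_x^{s_0}} \sim 1$; this is the amplitude of \eqref{def:f_r} rescaled by the factor $\ln M$ (consistent with the fact that \eqref{E:fa_size_r} places the red blob at size $1/\ln M$).

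For $f_{\text{ex}}^M$, I run a perturbative argument as in Proposition \ref{P:norm_deflation} but with approximate solution $\tilde{f}_r$ alone and no tubes. Since $\tilde{f}_r$ is already a free-transport solution, the only contribution to $F_{\text{err}}$ is the collision self-interaction $Q^\pm(\tilde{f}_r,\tilde{f}_r)$, which is controlled via the estimates of \S\ref{sec:Q+-(f_b,f_b)} to be $\ll 1$ in the $Z$-norm on the short window $[t_0^M,0]$ (the blob has small $L^1$-mass $\sim cN^3$, comparable to what was treated there after rescaling). For $g_{\text{ex}}^M$, I apply the full construction of \S\ref{sec:precise forumulation of bad}--\S\ref{sec:perturbative} with the reference time for the exponential decay factor shifted from $0$ to $t_0^M$: thus $f_{\text{a}} = f_{\text{b}} + c\,\chi(Mx)\chi(v/N)\exp[-\int_{t_0^M}^t \int f_{\text{b}}\,dv\,ds]$, so that the blob component has amplitude $c$ at $t = t_0^M$ and decays to amplitude $\sim cM^{-\delta}$ at $t = 0$.

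Verification of the four norm conditions then follows from Lemma \ref{L:fa_size} and Proposition \ref{P:norm_deflation}: both solutions have norm $\sim 1$ at $t_0^M$; the free-transport shift $N|t_0^M|\to 0$ is negligible, so the difference at $t_0^M$ is dominated by $\|f_{\text{b}}(t_0^M)\|_{L_v^{2,s_0}H_x^{s_0}} \sim (MN_2)^{s_0 - s} \leq 1/\ln M$; and at $t = 0$ the $f_{\text{ex}}^M$-blob retains amplitude $c$ while the $g_{\text{ex}}^M$-blob has decayed to $\sim cM^{-\delta}$, producing a difference of order $1$ in the $L_v^{2,s_0}H_x^{s_0}$ norm. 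The principal obstacle is to verify that the perturbative argument of \S\ref{sec:perturbative} closes for $f_{\text{ex}}^M$ once the blob amplitude has been inflated by $\ln M$: the $Z$-norm bilinear estimates of Lemma \ref{L:Z_bilinear} and the forcing bounds of Lemma \ref{L:Ferr_bound} must be re-checked to tolerate this logarithmic inflation without spoiling the exponential-step iteration, and the time horizon $|t_0^M|(MN_2)^{1-s} = \delta \ln M$ must remain compatible with the smallness of $\delta$ fixed in the proof of Proposition \ref{P:norm_deflation}.
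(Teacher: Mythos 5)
Your construction is, at its core, the same comparison the paper makes: one exact solution built from the blob plus the tubes (whose loss interaction drives the blob's amplitude down going forward in time), and a second exact solution built from the blob alone (which is nearly stationary), so that the two agree up to the tubes at the initial time and separate by the blob's decayed amplitude at $t=0$. The difference is in the parametrization, and this is where your proposal has a gap that you yourself flag but do not close. The paper does \emph{not} renormalize the blob: it keeps the $f_{\text{ex}}$ already produced by Corollary \ref{C:norm_deflation} (so Proposition \ref{P:norm_deflation} applies verbatim, with no re-verification) and chooses $t_{0}$ to be the \emph{interior} time in $[T_{\ast},0]$ at which the un-inflated $f_{\text{r}}$, growing backward in time, reaches unit norm, i.e.\ $|t_{0}|\sim (MN_{2})^{s-1}\ln\ln M$ rather than $|T_{\ast}|=\delta (MN_{2})^{s-1}\ln M$. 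The only new perturbative input is the easy statement that the constant-in-time function $f_{\text{r}}(t_{0})$ is an approximate solution with error $O(M^{-\delta})$ in $Z$.

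By instead setting $t_{0}^{M}=T_{\ast}$ and multiplying the blob amplitude by $\ln M$, you are obligated to re-run Lemmas \ref{L:fa_size}, \ref{L:Ferr_bound} and the iteration of Proposition \ref{P:norm_deflation} with the inflated datum, and this is not merely bookkeeping. The forcing bounds in Lemma \ref{L:Ferr_bound} survive (powers of $\ln M$ do not defeat the $M^{-\delta}$-type gains), but the absorption step in the proof of Proposition \ref{P:norm_deflation} requires $|I_{j}|\,\Vert f_{\text{a}}\Vert_{L^{\infty}_{I_{j}}Z}\lesssim\delta$, i.e.\ $\Vert f_{\text{a}}\Vert_{Z}\lesssim (MN_{2})^{1-s}$ uniformly on $[T_{\ast},0]$. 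With your inflation, the blob's contribution to the $Z$-norm near $t=T_{\ast}$ is of size $\delta(\ln M)^{2}M^{1-s+\delta}$, which exceeds $(MN_{2})^{1-s}$ when $N_{2}^{1-s}$ is near the permitted minimum $M^{\delta}$, and then the contraction constant in the iteration is $\sim\delta^{2}(\ln M)^{2}$ rather than $\sim\delta$, so the argument does not close as stated. This is repairable — e.g.\ strengthen the standing assumption to $N_{2}^{1-s}\geq M^{2\delta}$ (or insert the $(\ln M)^{2}$ loss explicitly and check it is dominated) — but as written the step you identify as ``the principal obstacle'' is genuinely open in your write-up, whereas the paper's choice of $t_{0}$ makes the obstacle disappear. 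A minor additional point: the self-interaction $Q^{\pm}(\tilde f_{\text{r}},\tilde f_{\text{r}})$ is controlled by the estimates \eqref{E:Ferr06}--\eqref{E:Ferr07} for $Q^{\pm}(f_{\text{r}},f_{\text{r}})$, not by the $Q^{\pm}(f_{\text{b}},f_{\text{b}})$ analysis of \S\ref{sec:Q+-(f_b,f_b)} that you cite.
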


\begin{proof}
For any $M\gg 1$, let $f_{\text{ex}}(t)$ be the solution given in Corollary %
\ref{C:norm_deflation}. Let $t_{0}$ be the time $T_{\ast }\leq t_{0}\leq 0$
at which $\Vert f_{\text{r}}(t_{0})\Vert _{L_{v}^{2,s_{0}}H_{x}^{s_{0}}}=1$.
Let $g_{\text{ex}}$ be the exact solution to Boltzmann with $g_{\text{ex}%
}(t_{0})=f_{\text{r}}(t_{0})$. Applying the same methods to approximate $g_{%
\text{ex}}(t)$, we obtain that for all $t_{0}\leq t\leq 0$, 
\begin{equation*}
g_{\text{ex}}(t)=f_{\text{r}}(t_{0})+g_{\text{c}}(t)
\end{equation*}%
where 
\begin{equation*}
\Vert g_{\text{c}}\Vert _{L_{t_{0}\leq t\leq 0}^{\infty }Z}\leq M^{-\delta }
\end{equation*}%
(Conceptually, this just results from the fact that on this short time
interval, $f_{\text{r}}$ is nearly stationary and moveover has small
self-interaction through the gain and loss terms, and thus the constant
function $f_{\text{r}}(t_{0})$ is a good approximation to an exact solution
to Boltzmann. )

Thus we have two solutions with the decompositions 
\begin{equation*}
f_{\text{ex}}(t)=f_{\text{r}}(t)+f_{\text{b}}(t)+f_{\text{c}}(t)
\end{equation*}%
\begin{equation*}
g_{\text{ex}}(t)=f_{\text{r}}(t_{0})+g_{c}(t)
\end{equation*}%
which gives 
\begin{equation*}
f_{\text{ex}}(t)-g_{\text{ex}}(t)=(f_{\text{r}}(t)-f_{\text{r}}(t_{0}))+f_{%
\text{b}}(t)+f_{\text{c}}(t)-g_{\text{c}}(t)
\end{equation*}%
For all $t$, 
\begin{equation*}
\Vert f_{\text{b}}(t)\Vert _{L_{v}^{2,s_{0}}H_{x}^{s_{0}}}\sim
(MN_{2})^{s_{0}-s}\sim \frac{1}{(\ln M)^{1+\mu }}
\end{equation*}%
where $N_{2}=M^{\mu }$ (recall we required $\mu \geq \delta $). Moreover, 
\begin{equation*}
\Vert f_{\text{c}}(t)\Vert _{L_{v}^{2,s_{0}}H_{x}^{s_{0}}}\leq \Vert f_{%
\text{c}}(t)\Vert _{Z}\lesssim M^{-\delta }\exp [|t|(MN_{2})^{s-1}]\leq
M^{-\delta (1-\ln 2)}
\end{equation*}%
just like in the end of the proof of Proposition \ref{P:norm_deflation}, and 
\begin{equation*}
\Vert g_{\text{c}}(t)\Vert _{L_{v}^{2,s_{0}}H_{x}^{s_{0}}}\leq M^{-\delta }
\end{equation*}%
Thus 
\begin{equation*}
\Vert f_{\text{ex}}(t_{0})-g_{\text{ex}}(t_{0})\Vert
_{L_{v}^{2,s_{0}}H_{x}^{s_{0}}}\sim \frac{1}{(\ln M)^{1+\mu }}
\end{equation*}%
and 
\begin{equation*}
\Vert f_{\text{ex}}(0)-g_{\text{ex}}(0)\Vert
_{L_{v}^{2,s_{0}}H_{x}^{s_{0}}}\sim \Vert f_{\text{r}}(0)-f_{\text{r}%
}(t_{0})\Vert _{L_{v}^{2,s_{0}}H_{x}^{s_{0}}}\sim 1
\end{equation*}
\end{proof}

\appendix 

\section{Remarks on scaling and Strichartz estimates}

\label{A:scaling}

In this appendix, we give some remarks on the scaling of (\ref{E:boltz1})
and its effects. Let $f_{\lambda }$ be as in (\ref{E:scaling condition}),
then 
\begin{equation*}
\Vert |\nabla _{x}|^{s}|v|^{r}f_{\lambda }\Vert _{L_{xv}^{2}}=\Vert |\nabla
_{x}|^{s}|v|^{r}f\Vert _{L_{xv}^{2}}
\end{equation*}%
if and only if 
\begin{equation}
\alpha s-\beta r=\frac{1}{2}(\alpha -\beta ).  \label{E:scal}
\end{equation}%
At criticality, \eqref{E:scal} must hold for all $\alpha ,\beta \in \mathbb{R%
}$. Taking $\alpha =\beta =1$, \eqref{E:scal} implies $s=r$. Setting $s=r$
in \eqref{E:scal}, we obtain $s(\alpha -\beta )=\frac{1}{2}(\alpha -\beta )$%
, from which we conclude that $s=r=\frac{1}{2}$. Hence, the norm is defined
as in (\ref{e:sobolev norm}) and we call (\ref{E:boltz1}) $\dot{H}_{x\xi }^{%
\frac{1}{2}}$-invariant. Moreover, the case $s=r$ allows a well-defined
notion of \emph{subcriticality}: since 
\begin{equation}
\Vert |\nabla _{x}|^{s}|v|^{s}f_{\lambda }\Vert _{L_{xv}^{2}}=\lambda ^{(s-%
\frac{1}{2})(\alpha -\beta )}\Vert |\nabla _{x}|^{s}|v|^{s}f\Vert
_{L_{xv}^{2}}  \label{eqn:norm scales}
\end{equation}%
a large data problem over a short time can be converted into a small data
problem over a long time when $s=r>\frac{1}{2}$.

Such a scaling property also effects the Strichartz estimates (\ref%
{e:Strichartz}). Suppose $\tilde{f}$ solves \eqref{E:01}. Then an estimate
of the type 
\begin{equation}
\Vert \tilde{f}\Vert _{L_{t\in I}^{q}L_{\xi }^{r}L_{x}^{p}}\lesssim \Vert 
\tilde{f}|_{t=0}\Vert _{L_{x\xi }^{2}}  \label{E:05}
\end{equation}%
is called a Strichartz estimate, where either $I=(-\infty ,+\infty )$, or $I$
is some fixed subinterval of time. A \emph{necessary condition} for such an
estimate (regardless of $I$) is that $r=p$. This follows from the fact that
if $f$ solves \eqref{E:01}, then 
\begin{equation*}
\tilde{g}(x,\xi ,t)=\tilde{f}(\lambda x,\lambda ^{-1}\xi ,t)
\end{equation*}%
solves \eqref{E:01} on the same time interval. If $p\neq r$, then we obtain
a contradiction to \eqref{E:05} by either sending $\lambda \rightarrow 0$ or 
$\lambda \rightarrow \infty $.

The estimate (\ref{E:05}) is valid for $p=r$ when the scaling condition in (%
\ref{e:Strichartz}) is met. The case $q=\infty $, $p=2$ is simply the fact
that the equation preserves the $L_{x\xi }^{2}$ norm. The case $q=2$, $p=3$
follows from the endpoint argument of Keel \& Tao \cite{KT98}, since the
corresponding linear propagator $e^{it\nabla _{x}\cdot \nabla _{\xi }}$
satisfies the dispersive estimate 
\begin{equation*}
\Vert e^{it\nabla _{x}\cdot \nabla _{\xi }}\phi \Vert _{L_{x\xi }^{\infty
}}\lesssim t^{-3}\Vert \phi \Vert _{L_{x\xi }^{1}}.
\end{equation*}%
We note that the Strichartz estimates (\ref{e:Strichartz}) are not the same
as those labeled in some literature as \textquotedblleft Strichartz estimate
for the kinetic transport equation\textquotedblright\ -- see \cite{BBGL14,
BP01, Ov11} for examples.

\section{Proof of the asymmetric bilinear estimate for the loss term}

\label{B:BilinearProof}

In order to give a shorter proof of \eqref{E:loss-1} in Theorem \ref{T:loss}%
, we shall prove instead an estimate for the $X_{0,0}$ norm (which is
actually \emph{larger} than $X_{0,-\frac{1}{2}+}$), although the bilinear
gain factor is not fully realized in every case. Specifically, we obtain 
\begin{equation}
\Vert \theta (t)\tilde{Q}^{-}(\tilde{f},\tilde{g})\Vert _{X_{0,0}}\lesssim
\min (M_{1},M_{2})N_{2}B_{M_{1},M_{2},N,N_{2}}\Vert \tilde{f}\Vert _{X_{0,%
\frac{1}{2}+}}\Vert \tilde{g}\Vert _{X_{0,\frac{1}{2}+}}  \label{E:loss-1b}
\end{equation}%
with the bilinear gain factor 
\begin{equation*}
B_{M_{1},M_{2},N,N_{2}}=%
\begin{cases}
1 & \text{if }M_{2}\leq M_{1}\,,\;N\leq N_{2} \\ 
(M_{1}/M_{2})^{1/2} & M_{1}\ll M_{2}\sim M\,,\;N\leq N_{2} \\ 
(N_{2}/N)^{1/2} & M_{2}\leq M_{1}\sim M\,,\;N_{2}\ll N \\ 
(\frac{M_{1}}{M_{2}})^{1/4}(\frac{N_{2}}{N})^{1/4} & M_{1}\ll
M_{2}\,,\;N_{2}\ll N%
\end{cases}%
\end{equation*}%
Note that in the statement of Theorem \ref{T:loss}, the estimate claimed for
the $X_{0,-\frac{1}{2}+}$ norm in \eqref{E:loss-1} for the case $M_{1}\ll
M_{2}$, $N_{2}\ll N$ has the stronger bilinear gain factor: 
\begin{equation*}
B_{M_{1},M_{2}}B_{N,N_{2}}=(\frac{M_{1}}{M_{2}})^{1/2}(\frac{N_{2}}{N})^{1/2}
\end{equation*}%
Otherwise, the estimate \eqref{E:loss-1b} matches \eqref{E:loss-1} for the $%
X_{0,-\frac{1}{2}+}$ norm. The main point here is the fact that the bilinear
gain factor is asymmetric in $f$ and $g$. One could bring in the cone-washer
decompostion like \S \ref{sec:sharpness} to reach the stronger bilinear gain
factor, but that could be too much for one estimate not used in this paper.

To prove \eqref{E:loss-1b}, it suffices, by a standard reduction, to assume
that $\tilde{f}=e^{it\nabla _{x}\cdot \nabla _{\xi }}\tilde{\phi}$ and $%
\tilde{g}=e^{it\nabla _{x}\cdot \nabla _{\xi }}\tilde{\psi}$. Then 
\begin{equation*}
\Vert \tilde{Q}^{-}(\tilde{f},\tilde{g})\Vert _{X_{0,0}}=\Vert e^{-it\nabla
_{x}\cdot \nabla _{\xi }}\tilde{Q}^{-}(e^{it\nabla _{x}\cdot \nabla _{\xi }}%
\tilde{\phi},e^{it\nabla _{x}\cdot \nabla _{\xi }}\tilde{\psi})\Vert
_{L_{x\xi t}^{2}}
\end{equation*}%
Passing to the Fourier side $(x,\xi )\mapsto (\eta ,v)$, and using a dual
pairing with a function $\hat{\zeta}(\eta ,v,\tau )\in L_{\eta v\tau }^{2}$,
it suffices to bound\footnote{%
Notice that, such a process does not require the Fourier transform of the
collision kernel as we estimate in the $v$-side.} 
\begin{equation*}
\int_{\eta ,\eta _{2},v,v_{2}}\hat{\phi}(\eta -\eta _{2},v)\hat{\psi}(\eta
_{2},v_{2})\hat{\zeta}(\eta ,v,\eta _{2}\cdot (v_{2}-v))\,d\eta \,d\eta
_{2}\,dv\,dv_{2}
\end{equation*}%
\begin{equation*}
\lesssim \min (M_{1},M_{2})N_{2}B_{M_{1},M_{2}}B_{N,N_{2}}\Vert \hat{\phi}%
\Vert _{L^{2}}\Vert \hat{\psi}\Vert _{L^{2}}\Vert \hat{\zeta}\Vert _{L^{2}}
\end{equation*}%
where we can assume each factor $\hat{\phi}\geq 0$, $\hat{\psi}\geq 0$, $%
\hat{\zeta}\geq 0$. Replacing $w_{2}=v_{2}-v$, 
\begin{equation*}
=\int_{\eta ,\eta _{2},v,w_{2}}\hat{\phi}(\eta -\eta _{2},v)\hat{\psi}(\eta
_{2},w_{2}+v)\hat{\zeta}(\eta ,v,\eta _{2}\cdot w_{2})\,d\eta \,d\eta
_{2}\,dv\,dw_{2}
\end{equation*}%
We use superscripts to denote components, for example $\eta _{2}=(\eta
_{2}^{1},\eta _{2}^{2},\eta _{2}^{3})$.

\bigskip

\noindent \emph{Case 1}. $M_2 \leq M_1$. Assume that 
\begin{equation*}
|\eta_2^3|= \max(|\eta_2^1|, |\eta_2^2|, |\eta_2^3|) \sim |\eta_2| \sim M_2
\end{equation*}
(the other two cases are similar). Move the $v$, $\eta_2$, $w_2^1$, $w_2^2$
integration to the outside and the $\eta$, $w_2^3$ integration on the
inside. Cauchy-Schwarz in $\eta$, $w_2^3$ to obtain 
\begin{equation*}
\lesssim \int_{v,\eta_2,w_2^1,w_2^2} \| \hat \phi(\eta-\eta_2,v) \hat \psi(
\eta_2, w_2+v) \|_{L_{\eta w_2^3}^2} \| \hat\zeta( \eta, v, \eta_2\cdot w_2)
\|_{L_{\eta w_2^3}^2} \, d\eta_2 \, dw_2^1 \, dw_2^2 \, dv
\end{equation*}
For the $\hat \zeta$ term, change variable from $w_2^3$ to $\tau = \eta_2
\cdot w_2$ (here, $\eta_2$ and $w_2^1$, $w_2^2$ can be regarded as fixed,
since they are in the outside integration). The change of differential is $%
d\tau = |\eta_2^3| dw_2^3$, and since $|\eta_2^3|\sim M_2$, 
\begin{equation*}
\lesssim M_2^{-1/2} \int_v \left( \int_{\eta_2,w_2^1,w_2^2} \| \hat \psi(
\eta_2, w_2+v) \|_{L_{w_2^3}^2} \, d\eta_2 \, dw_2^1 \, dw_2^2 \right) \|
\hat \phi(\eta_1,v)\|_{L^2_{\eta_1}} \| \hat \zeta(\eta,v,\tau)
\|_{L^2_{\eta \tau}} dv
\end{equation*}
For fixed $v$, the $w_2^1$, $w_2^2$ integrations are confined to sets of
width $N_2$ (even if $N\gg N_2$), due to the fact that $w_2=v+v_2$. In the
inner integral, Cauchy-Schwarz in $\eta_2, w_2^1, w_2^2$ with the whole
integrand in one factor and $1$ in the other to obtain the factor $%
M_2^{3/2}N_2$ coming from the support, 
\begin{equation*}
\lesssim M_2^{-1/2}M_2^{3/2} N_2 \| \hat\psi\|_{L^2} \int_v \| \hat
\phi(\eta_1,v)\|_{L^2_{\eta_1}} \| \hat \zeta(\eta,v,\tau) \|_{L^2_{\eta
\tau}} dv
\end{equation*}
Cauchy-Schwarz in $v$ to obtain 
\begin{equation*}
\lesssim M_2N_2 \| \hat \phi \|_{L^2} \| \hat \psi \|_{L^2} \| \hat \zeta
\|_{L^2}
\end{equation*}

\bigskip

\noindent \emph{Case 2}. $M_1 \ll M_2\sim M$. Divide the $\eta$ and $\eta_2$
space into cubes of size $M_1$. Once one of these cubes in $\eta_2$ space is
selected, the inner $\eta$ integral is confined to $O(1)$ cubes. We can
therefore carry the Case 1 argument above out and it will yield instead the
factor $M_2^{-1/2}M_1^{3/2} N_2 $. We finish with Cauchy-Schwarz over the
cube partition (which incurs no loss). 
\begin{equation*}
\lesssim M_2^{-1/2}M_1^{3/2} N_2 \| \hat \phi \|_{L^2} \| \hat \psi \|_{L^2}
\| \hat \zeta \|_{L^2}
\end{equation*}

\bigskip

\noindent \emph{Case 3}. $M_2 \leq M_1$ and $N \gg N_2$. Without loss we may
assume that 
\begin{equation*}
|w_2^3| = \max(|w_2^1|, |w_2^2|, |w_2^3|) \sim N
\end{equation*}
(the other two cases are similar). Move the $v, w_2, \eta_2^1, \eta_2^2$
integration to the outside, bring the $\eta, \eta_2^3$ integration to the
inside, and Cauchy-Schwarz in $\eta, \eta_2^3$ to obtain 
\begin{equation*}
\lesssim \int_{v, w_2, \eta_2^1, \eta_2^2 } \| \hat \phi(\eta-\eta_2,v) \hat
\psi( \eta_2, w_2+v) \|_{L_{\eta \eta_2^3}^2} \| \hat\zeta( \eta, v,
\eta_2\cdot w_2) \|_{L_{\eta \eta_2^3}^2} \, dv \, dw_2 \, d\eta_2^1 \,
d\eta_2^2
\end{equation*}
For the $\hat \zeta$ term, change variable from $\eta_2^3$ to $\tau =
\eta_2\cdot w_2$, which has differential conversion $d\tau = |w_2^3|
d\eta_2^3$ ($w_2$ is fixed, since the term is inside the $w_2$ integration).
Since $|w_2^3|\sim N$, 
\begin{equation*}
\lesssim N^{-1/2} \int_{v, w_2, \eta_2^1, \eta_2^2 } \| \hat
\phi(\eta_1,v)\|_{L^2_{\eta_1}} \| \hat \psi( \eta_2, w_2+v)
\|_{L_{\eta_2^3}^2} \| \hat \zeta(\eta,v,\tau)\|_{L^2_{\eta\tau}} \, dv \,
dw_2 \, d\eta_2^1 \, d\eta_2^2
\end{equation*}
Converting back from $w_2$ to $v_2=w_2+v$, the remaining integrals split: 
\begin{equation*}
= N^{-1/2} \int_v \| \hat \phi(\eta_1,v)\|_{L^2_{\eta_1}} \| \hat
\zeta(\eta,v,\tau)\|_{L^2_{\eta\tau}} \, dv \int_{v_2, \eta_2^1, \eta_2^2}
\| \hat \psi( \eta_2, v_2) \|_{L_{\eta_2^3}^2} \, dv_2 \, d\eta_2^1 \,
d\eta_2^2
\end{equation*}
In the first integral, Cauchy-Schwarz in $v$, and in the second integral,
Cauchy-Schwarz with the whole integrand in one part and $1$ in the other to
pick up the support factor $N_2^{3/2}M_2$. 
\begin{equation*}
\lesssim N^{-1/2} N_2^{3/2} M_2 \| \hat \phi \|_{L^2} \| \hat \psi \|_{L^2}
\| \hat \zeta \|_{L^2}
\end{equation*}

\bigskip

\noindent \emph{Case 4}. $M_{1}\ll M_{2}$ and $N\gg N_{2}$. Divide the $\eta 
$ and $\eta _{2}$ space into cubes of size $M_{1}$. Once one of these cubes
in $\eta _{2}$ space is selected, the inner $\eta $ integral is confined to $%
O(1)$ cubes. We can therefore carry the Case 3 argument above out and it
will yield instead the factor $N^{-1/2}N_{2}^{3/2}M_{1}$. We finish with
Cauchy-Schwarz over the cube partition (which incurs no loss). 
\begin{equation*}
\lesssim N^{-1/2}N_{2}^{3/2}M_{1}\Vert \hat{\phi}\Vert _{L^{2}}\Vert \hat{%
\psi}\Vert _{L^{2}}\Vert \hat{\zeta}\Vert _{L^{2}}
\end{equation*}%
Alternatively, we can appeal to Case 2 to obtain 
\begin{equation*}
\lesssim M_{2}^{-1/2}M_{1}^{3/2}N_{2}\Vert \hat{\phi}\Vert _{L^{2}}\Vert 
\hat{\psi}\Vert _{L^{2}}\Vert \hat{\zeta}\Vert _{L^{2}}
\end{equation*}%
Taking the average gives 
\begin{equation*}
\lesssim (\frac{M_{1}}{M_{2}})^{1/4}(\frac{N_{2}}{N})^{1/4}M_{1}N_{2}\Vert 
\hat{\phi}\Vert _{L^{2}}\Vert \hat{\psi}\Vert _{L^{2}}\Vert \hat{\zeta}\Vert
_{L^{2}}
\end{equation*}

\end{document}